\def\ga{\alpha}     \def\gb{\beta}       \def\gg{\gamma}
       \def\gd{\delta}      
\def\gth{\theta}                         \def\vge{\varepsilon}
       \def\vgf{\varphi}    
            \def\gl{\lambda}
\def\gm{\mu}                 
    \def\gr{\rho}        
       \def\gt{\tau}
     \def\Gd{\Delta}
\def\Gw{\Omega}              
\newcommand{\pd}{\partial}
\newcommand{\Real}{\mathbb{R}}
\newcommand{\R}{\mathbb{R}}
\newtheorem{theorem}{Theorem}[section]
\newtheorem{lemma}[theorem]{Lemma}
\newtheorem{remark}[theorem]{Remark}
\newtheorem{corollary}[theorem]{Corollary}
\newtheorem{example}[theorem]{Example}
\newtheorem{definition}[theorem]{Definition}
\newtheorem{question}[theorem]{Question}
\newtheorem{conjecture}[theorem]{Conjecture}
\newtheorem{thm}[theorem]{Theorem}
\newtheorem{rem}[theorem]{Remark}
\newtheorem{lem}[theorem]{Lemma}
\newtheorem{Cor}[theorem]{Corollary}
\newcommand{\Green}[4]{\mbox{$G^{#1}_{#2}(#3,#4)$}}
\newcommand{\dx}{\,\mathrm{d}x}
\newcommand{\dy}{\,\mathrm{d}y}
\newcommand{\dz}{\,\mathrm{d}z}
\newcommand{\dt}{\,\mathrm{d}t}
\newcommand{\be}{\begin{equation}}
\newcommand{\ee}{\end{equation}}
\newcommand{\mysection}[1]{\section{#1}\setcounter{equation}{0}}
\newcommand{\bea}{\begin{eqnarray}}
\newcommand{\eea}{\end{eqnarray}}
\newcommand{\bean}{\begin{eqnarray*}}
\newcommand{\eean}{\end{eqnarray*}}
\def\squarebox#1{\hbox to #1{\hfill\vbox to #1{\vfill}}}
\begin{document}
\renewcommand{\theequation}{\thesection.\arabic{equation}}
\title[Large time behavior of the heat kernel]%
{Some aspects of large time behavior of the heat kernel: an overview with perspectives}
\author{Yehuda Pinchover}
\address{Department of Mathematics\\ Technion - Israel Institute of
Technology\\ Haifa 32000, Israel}
\email{pincho@techunix.technion.ac.il}
 %
\subjclass[2000]{Primary 35K08; Secondary 35B09, 47D07, 47D08}
\date{August 31, 2012}
\dedicatory{Dedicated to Professor Michael Demuth on the occasion of his 65th birthday}
\keywords{Heat kernel, Green function, parabolic Martin boundary, positive solutions, ratio limit.}
\begin{abstract}
We discuss a variety of developments in the study of large time behavior of the positive minimal heat kernel of a time independent (not necessarily symmetric) second-order parabolic operator defined on a domain $M\subset\R^d$, or more generally, on a noncompact Riemannian manifold $M$. Our attention is mainly focused on {\em general} results in general settings.
\end{abstract}
\maketitle
\tableofcontents
\mysection{Introduction}\label{Introduction}
The large time behavior of the heat kernel of a second-order parabolic operator has been extensively studied over the recent decades (see for example the following monographs and
survey articles
\cite{A,Ch,C,C98,Dheat,G,Has,Li,Pinsky,PE,R,Simon82,Va,VSCC,W}, and
references therein). The purpose of the present
paper is to review a variety of developments in this area, and to point out a number of their consequences. Our attention is mainly focused on {\em general} results in general settings. Still, the selection of topics in this survey is incomplete, and is according to the author's working experience and taste. The reference list is far from being complete and serves only this expos\'{e}.

Let $P$ be a general linear, second-order, elliptic operator defined on a domain $M\subset\R^d$ or, more generally, on a noncompact, connected, Riemannian manifold $M$ of dimension $d\geq 1$. Denote the cone of all positive solutions of the
equation $Pu=0$ in $M$ by $\mathcal{C}_{P}(M)$. The {\em
generalized principal eigenvalue}  is defined by
$$\gl_0=\gl_0(P,M)
:= \sup\{\gl \in \mathbb{R} \; \mid\; \mathcal{C}_{P-\lambda}(M)\neq
\emptyset\}.$$ Throughout this paper we always assume that
$\lambda_0>-\infty$.

Suppose that $\lambda_0\geq 0$, and consider the (time-independent) parabolic operator
\begin{equation}\label{eqL}
  Lu:=\pd_t u+P(x,\pd_x)u \qquad  (x,t)\in M\times (0,\infty).
\end{equation}
We denote by $\mathcal{H}_P(M\times
(a,b))$ the cone of all nonnegative solutions of the parabolic equation
\begin{equation}\label{Lu0}
Lu =0 \qquad \mbox{ in } M\times(a,b).
\end{equation}

Let $k_P^{M}(x,y,t)$ be the {\em positive minimal heat kernel} of the
parabolic operator $L$ on the manifold $M$.  By definition, for a fixed $y\in M$, the function $(x,t) \mapsto k_P^M(x,y,t)$
is the minimal positive solution of the equation
\begin{equation}\label{heat}
 Lu =0  \qquad\mbox{ in } M\times(0,\infty) ,
\end{equation}
subject to  the initial data $\delta_y$, the Dirac distribution at $y \in M$.
It can be easily checked that for $\lambda\leq \lambda_0$, the
heat kernel $k_{P-\lambda}^M$ of the operator $P-\lambda$ on $M$ satisfies the identity
\begin{equation}\label{eq_hkp-gl}
k_{P-\lambda}^M(x,y,t)=e^{\lambda t}k_P^M(x,y,t).
\end{equation}
So, it is enough to study the large time behavior of $k_{P-\lambda_0}^M$, and therefore, in most cases we assume that $\gl_0(P,M)=0$. Note that the heat kernel of the operator $P^*$, the formal adjoint of the operator $P$ on $M$, satisfies the relation
$$k_{P^*}^M(x,y,t)=k_P^M(y,x,t).$$

\vskip 3mm

Here we should mention that many authors derived upper and lower Gaussian bounds for
heat kernels of elliptic operators on $M:=\mathbb{R}^d$, or more
generally, on noncompact Riemannian manifolds $M$. As a prototype result, let us recall the following classical result of Aronson \cite{A}:
 \begin{example}\label{ex_Aronson} {\em Let $P$ be a second-order uniformly elliptic operator in divergence form
on $\mathbb{R}^d$ with real coefficients satisfying some general boundedness assumptions. Then the
following Gaussian estimates hold:
\begin{multline*}
C_1(4\pi t)^{-d/2}
\exp\left(-C_2\frac{|x-y|^2}{t}-\omega_1 t\right)\leq k_P^{\R^d}(x,y,t)\\[2mm]
\leq C_3(4\pi t)^{-d/2}\exp\left(-C_4\frac{|x-y|^2}{t}+\omega_2 t\right)
 \quad \forall (x,y,t)\in \mathbb{R}^d
 \times\mathbb{R}^d\times\mathbb{R}_+.
\end{multline*}
 }
\end{example}
However, since Gaussian  estimates of the above type in general are not tight as $t\to\infty$, such bounds  do not provide us with the exact large time behavior of the heat kernel, let alone strong ratio limits of two heat kernels.

\vskip 3mm

In spite of this, and as a first and rough result concerning the large time behavior of the heat kernel,  we have the following explicit and useful formula
  \begin{equation}\label{logform}
\lim_{t \to \infty} \dfrac{\log k_P^M(x,y,t)}{t} =-\lambda_0.
\end{equation}
We note that \eqref{logform} holds in the general case, and characterizes the generalized principal eigenvalue $\gl_0$ in terms of the large time behavior of $\log k_P^M(x,y,t)$. The above formula is well known in the symmetric case, see for example \cite{PLi86} and \cite[Theorem~10.24]{Grigoryan}. For the proof in the general case, see Corollary~\ref{asser_log}.

\vskip 3mm

To get a more precise result one should introduce the notion of criticality. We say that the operator~$P$ is \emph{subcritical} (respectively, \emph{critical}) in $M$ if for some $x \not = y$, and therefore for any
$x \not = y$,  $x,y\in M$, we have
\begin{equation}\label{def.critical}
  \int_0^\infty k_P^{M}(x,y,\tau)\,\mathrm{d}\tau<\infty \qquad
  \left(\mbox{respectively, } \int_0^\infty
  k_P^{M}(x,y,\tau)\,\mathrm{d}\tau=\infty\right).
\end{equation}
It follows from the above definition that, roughly speaking, the heat kernel of a subcritical operator in $M$ ``decays" faster as $t\to\infty$ than the heat kernel of a critical operator in $M$. This rule of thumb will be discussed in Section~\ref{sect1DMY}.

\vskip 3mm

%
If $P$ is subcritical in $M$, then the function
\begin{equation}\label{def.Gr}
G_P^{M}(x,y):=  \int_0^\infty k_P^{M}(x,y,\tau)\,\mathrm{d}\tau
\end{equation}
is called the {\em positive minimal Green function} of the operator $P$ in $M$.

It follows from \eqref{logform} that for $\lambda<\lambda_0$, the operator $P-\lambda$ is
subcritical in $M$. Clearly, $P$ is subcritical (respectively,
critical) in $M$, if and only if $P^*$, is subcritical
(respectively, critical) in $M$. Furthermore, it is well known that if $P$ is
critical in $M$, then $\mathcal{C}_{P}(M)$ is a one-dimensional
cone, and any positive supersolution of the  equation $Pu=0$ in $M$ is in fact a solution. In this case, the unique positive solution $\varphi \in \mathcal{C}_{P}(M)$ is called
{\em Agmon ground state} (or in short ground state) of the operator $P$ in $M$ \cite{Agmon82,Pheat,Pinsky}.
We denote the ground state of $P^*$ by $\varphi^*$.

\vskip 3mm

The following example demonstrates two prototype behaviors as $t\to \infty$ of heat kernels corresponding to two particular classical cases.
\begin{example}\label{ex_int}
{\em

1. Let $P=-\Delta$, $ M=\mathbb{R}^d$, where $d\geq 1$. It is well known that $\lambda_0(-\Gd,\R^d)=0$, and that the heat kernel is given by the Gaussian kernel. Hence,
\begin{multline*}
\mathrm{e}^{\lambda_0 t} k_{-\Delta}^{\mathbb{R}^d}(x,y,t)=k_{-\Delta}^{\mathbb{R}^d}(x,y,t)\\=\frac{1}{(4\pi t)^{d/2}}\exp \left(\frac{-|x-y|^2}{4t}\right)  \underset{t\to \infty}{\sim} t^{-d/2} \underset{t \to \infty }{\to} 0.
\end{multline*}
So, the rate of the decay depends on the dimension, and clearly, $-\Gd$ is critical in $\mathbb{R}^d$ if and only if $d=1,2$. Nevertheless, the above limit is $0$ in any dimension.

\vskip 3mm

2. Suppose that $P$ is a symmetric nonnegative elliptic operator with real and smooth coefficients which is defined on a smooth bounded domain $M\Subset\R^d$, and let  $\{\varphi_n\}_{n=0}^\infty$ be the complete orthonormal sequence of the (Dirichlet) eigenfunctions  of $P$ with the corresponding nondecreasing sequence of eigenvalues $\{\gl_n\}_{n=0}^\infty$. Then the heat kernel has the eigenfunction expansion
    \begin{equation}\label{ex_ef_exp}
    k_P^M(x,y,t) = \sum_{n=0}^\infty e^{-\lambda_n t}\varphi_n(x)\varphi_n(y).
\end{equation}
 Hence,
$$\mathrm{e}^{\lambda_0 t} k_{P}^{M}(x,y,t)=\sum_{n=0}^\infty \mathrm{e}^{(\lambda_0-\lambda_n) t}\varphi_n(x)\varphi_n(y)
\underset{t \to \infty }{\to} \varphi_0(x)\varphi_0(y)>0 .$$
Therefore, the operator $P-\gl_0$ is critical in $M$, $t^{-1}\log k_{P-\gl_0}^M(x,y,t)\underset{t \to \infty }{\to} 0$, but $k_{P-\gl_0}^M$ does not decay as $t\to \infty$.

\vskip 3mm

3. Several explicit formulas of heat kernels of certain classical operators are included in \cite{CCFI,Dheat,Grigoryan}, each of which
either tends to zero or converges to a positive function as $t\to\infty$.}
\end{example}

The characterization of $\gl_0$ in terms of the large time behavior of the heat kernel, given by \eqref{logform}, provides us with the asymptotic behavior of $\log k_P^{M}$ as $t\to \infty$ but not of $k_P^{M}$ itself. Moreover, \eqref{logform} does not distinguish between critical and subcritical operators. In the first part of the present article we provide a complete proof that
$$\lim_{t\to\infty} e^{\lambda_0 t}k_P^{M}(x,y,t)$$ always
exists. This basic result has been proved in two parts in \cite{Pheat} and \cite{P03},
and here, for the first time, we give a comprehensive and a bit simplified proof (see also \cite{AB,CK,GS,Has,KLVW,Pinsky,S,Va,Z} and references therein for previous and related results). We have:
\begin{theorem}[\cite{Pheat,P03}]\label{thm1} Let $P$ be an elliptic operator defined on $M$, and assume that $\gl_0(P,M)\geq 0$.
\begin{description}
\item[{\em (i)}] {\bf The subcritical case:} If $P-\lambda_0$ is subcritical in $M$, then
$$\lim_{t\to\infty} e^{\lambda_0 t} k_P^{M}(x,y,t)=0.$$
  \item[{\em (ii)}] {\bf The positive-critical case:}  If $P-\lambda_0$ critical is in $M$, and the
ground states $\varphi$ and $\varphi^*$ of $P-\lambda_0$ and
$P^*-\lambda_0$, respectively, satisfy $\varphi^*\varphi \in
L^1(M)$, then
  $$\lim_{t\to\infty} e^{\lambda_0 t} k_P^{M}(x,y,t)=
  \frac{\varphi(x)\varphi^*(y)}{\int_M \varphi^*(z)\varphi(z)\dz}\,.$$
\item[{\em (iii)}] {\bf The null-critical case:} If $P-\lambda_0$ is critical in $M$, and the
ground states $\varphi$ and $\varphi^*$ of $P-\lambda_0$ and
$P^*-\lambda_0$, respectively, satisfy $\varphi^*\varphi\not \in
L^1(M)$, then
$$\lim_{t\to\infty}
  e^{\lambda_0 t} k_P^{M}(x,y,t)=0.$$
\end{description}

Moreover, for $\lambda<\lambda_0$, let $\Green{M}{P-\lambda}{x}{y}$ be the minimal positive
Green function of the elliptic operator $P-\lambda$ on $M$. Then  the following Abelian-Tauberian relation holds
\begin{equation}\label{eqgreen}
\lim_{t\to\infty} e^{\lambda_0 t}k_P^{M}(x,y,t)=
\lim_{\lambda\nearrow\lambda_0}(\lambda_0-\lambda)\Green{M}{P-\lambda}{x}{y}.
\end{equation}
\end{theorem}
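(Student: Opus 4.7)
Reduce to $\lambda_0=0$ by replacing $P$ with $P-\lambda_0$ via the identity $k^M_{P-\lambda}=e^{\lambda t}k^M_P$. The proof proceeds in three phases: the Abelian half of \eqref{eqgreen}, conditional on existence of the time-limit; then existence and a product structure for the limit; and finally identification of the constant in each subcase. For the Abelian half, the substitution $s=-\lambda t$ in \eqref{def.Gr} gives, for $\lambda<0$,
$$
(-\lambda)G^M_{P-\lambda}(x,y)=\int_0^\infty e^{-s}\,k^M_P\!\left(x,y,\tfrac{s}{-\lambda}\right)\mathrm{d}s.
$$
Once the time-limit $L(x,y):=\lim_{t\to\infty}k^M_P(x,y,t)$ is known to exist, a uniform upper bound on $k^M_P(x,y,t)$ for $t\geq 1$ (from parabolic Harnack together with the subexponential growth that follows from \eqref{logform}) activates dominated convergence, yielding $(-\lambda)G^M_{P-\lambda}(x,y)\to L(x,y)$ as $\lambda\nearrow 0$, which is \eqref{eqgreen}.

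For existence and product structure, fix $t_n\to\infty$ and set $u_n(x,y,s):=k^M_P(x,y,s+t_n)$. Interior Schauder estimates and parabolic Harnack yield local equicontinuity of $\{u_n\}$ on $M\times M\times\R$, so along a subsequence $u_n\to u$ locally uniformly, where $u\geq 0$ is space--time harmonic for $\partial_s+P$ in $(x,s)$ and for $\partial_s+P^*$ in $(y,s)$. In the critical case $\mathcal{C}_P(M)$ is one-dimensional, spanned by $\varphi$, and a parabolic Martin boundary argument (the only positive space--time harmonic functions for a critical operator are time-independent scalar multiples of $\varphi$) forces $u(x,y,s)=c\,\varphi(x)\varphi^*(y)$ for some constant $c\geq 0$.

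For identification and subsequence-independence of $c$: in the \emph{subcritical} case the finite Green function $\int_0^\infty k^M_P(x,y,t)\,\mathrm{d}t=G^M_P(x,y)<\infty$, combined with the Harnack-type bound $k^M_P(x,y,t)\leq C\int_{t+1}^{t+2}k^M_P(x,y,s)\,\mathrm{d}s$ (Moser's inequality applied to the past point $t$ and the future interval $[t+1,t+2]$), forces $k^M_P(x,y,t)\to 0$. In the \emph{critical} case, the ground-state conservation identity $\int_M k^M_P(x,y,t)\varphi(y)\,\mathrm{d}y=\varphi(x)$ together with Fatou's lemma in $y$ yields $c\int_M\varphi^*\varphi\leq 1$; the \emph{null-critical} subcase $\int\varphi^*\varphi=\infty$ then forces $c=0$. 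In the \emph{positive-critical} subcase the matching lower bound $c\int\varphi^*\varphi\geq 1$ is obtained either by testing against compactly supported truncations $\psi_n\nearrow\varphi$ (where dominated convergence applies directly) or via a Doob $h$-transform and the ergodic theorem for the resulting conservative semigroup with finite invariant measure $\varphi\varphi^*\,\mathrm{d}z$. Uniqueness of $c$ across subsequences then promotes the subsequential limit to a full limit.

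The hardest step is the equality $c=1/\int\varphi^*\varphi$ in the positive-critical case: Fatou provides only $c\int\varphi^*\varphi\leq 1$, and the reverse direction requires either a careful compactly supported approximation plus continuity, or the Doob $h$-transform and ergodic machinery in full. A secondary delicate point is the parabolic Martin boundary assertion that positive space--time harmonic functions associated with a critical $P$ must be time-independent; this rests on one-dimensionality of $\mathcal{C}_P(M)$ combined with minimality of the parabolic Martin kernel at time $+\infty$.
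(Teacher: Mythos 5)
The subcritical case, the Abelian-Tauberian step, and the Fatou/truncation bookkeeping for the constant in the positive-critical case are all sound. The fatal gap is the ``parabolic Martin boundary argument'' you invoke to get the product structure $u(x,y,s)=c\,\varphi(x)\varphi^*(y)$: the assertion that for a critical $P$ every positive space--time harmonic function on $M\times\mathbb R$ is a time-independent multiple of $\varphi$ is simply false. For $P=-\Delta$ on $\mathbb R$ (null-critical), $u(x,t)=e^{bx+b^2t}$ is, for every $b\neq 0$, a positive ancient solution of $u_t-u_{xx}=0$ that depends on $t$, so criticality of $P$ plus one-dimensionality of $\mathcal C_P(M)$ does not force time-independence. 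Any workable argument would have to exploit the very special structure of limits $u=\lim k_P^M(\cdot,\cdot,\cdot+t_n)$ rather than quote a general Liouville principle, and that is exactly where the real work of the theorem lies. Moreover, the other ingredient you cite --- minimality of the parabolic Martin function associated with $\varphi$ in $\mathcal H_P(M\times\mathbb R_-)$ --- is posed in this paper as an \emph{open question} (Question~\ref{questBS}), so it cannot be used as a known fact.

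The paper's route is structurally different precisely to avoid this gap. In the positive-critical case it does not go through any Liouville/Martin statement at all: it iterates the semigroup identity to manufacture two limiting kernels $u$ and $\hat u$, shows that both $\varphi$ and $u(\cdot,y,0)$ are positive eigenfunctions of the integral operator $Uf=\int \hat u(\cdot,z,0)f(z)\,\mathrm dz$ with eigenvalue~$1$, and then uses a simplicity lemma from \cite{Pheat} to deduce $u(x,y,0)=\beta(y)\varphi(x)$; the constant $\beta$ is then pinned down by invariance of $\varphi^*$. In the null-critical case the key device is the skew product $\bar P=P\otimes I+I\otimes P$ on $\bar M=M\times M$: if $\bar P$ is subcritical one is done by case (i) applied to $\bar P$ (since the heat kernel factors), and if $\bar P$ is critical one has the hypothesis needed for Varadhan's lemma (Lemma~\ref{lemVa}), which shows spatial oscillations of bounded Cauchy solutions die out, after which Fatou and null-criticality force the limit to vanish; finally $f=k_P^M(\cdot,y,1)$ is substituted to upgrade from Cauchy data to the heat kernel itself. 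Neither the $\bar P$-dichotomy nor the integral-operator spectral step appears in your proposal, so as written the proposal asserts its central claim rather than proving it.
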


\vspace{3mm}

The outline of the present paper is as follows. In Section~\ref{sec1} we provide a short review of the theory of positive solutions and the basic properties of the heat kernel. The proof of Theorem~\ref{thm1} is postponed to Section~\ref{secmainthm} since it needs some preparation. It turns out that the proof of the null-critical case (given in \cite{P03}) is the most subtle part of the proof of Theorem~\ref{thm1}. It relies on the large time behaviors of the parabolic capacitory potential and
of the heat content that are studied in Section \ref{secaux}, and on an extension of Varadhan's lemma that is proved in Section \ref{secVa} (see Lemma \ref{lemVa}). Section~\ref{sec_appl} is devoted to some applications to Theorem~\ref{thm1}. In particular, Corollary~\ref{cor_l3} seems to be new.

The next two sections deal with ratio limits. In Section~\ref{sect1DC}, we discuss the existence of the strong ratio limit
$$\lim_{t\to\infty}\frac{k_P^ M(x,y,t)}{k_P^ M(x_0,x_0,t)}$$
(Davies' conjecture), and in Section~\ref{sect1DMY} we deal with the conjecture that
\begin{equation}\label{eqconjMain_int}
\lim_{t\to\infty}\frac{k_{P_+}^M(x,y,t)}{k_{P_0}^M(x,y,t)}=0 \qquad \forall x,y\in M,
\end{equation}
where $P_+$ and $P_0$ are respectively subcritical and critical
operators in $M$. Finally, in Section~\ref{secequiv} we discuss the equivalence of heat kernels of two subcritical elliptic operators (with the same $\gl_0$) that agree outside a compact set. Unlike the analogous question concerning the equivalence of Green functions, the problem concerning the equivalence of heat kernels is still ``terra incognita". Solving this problem will in particular enable us to study the stability of the large time behavior of the heat kernel under perturbations.

The survey is an expanded version of a talk given by the author at the conference ``Mathematical Physics, Spectral Theory and Stochastic Analysis" held in Goslar, Germany, September 11--16, 2011, in honor of Professor Michael Demuth. We note that most of the results in sections~\ref{secaux}--\ref{sec_appl} originally appeared in \cite{Pheat,P03}, those of Section~\ref{sect1DC} appeared in \cite{PDavies}, while those in sections~\ref{sect1DMY}--\ref{secequiv} originally
appeared in \cite{fkp}.

\mysection{Preliminaries}\label{sec1} In this section we recall
basic definitions and facts concerning the theory of nonnegative
solutions of second-order linear elliptic and parabolic operators
(for more details and proofs, see for example \cite{Pinsky}).

 Let $P$ be a linear, second-order, elliptic operator defined in a  noncompact, connected,
$C^3$-smooth Riemannian manifold $M$ of dimension $d$. Here $P$ is
an  elliptic operator with real and H\"{o}lder continuous
coefficients which in any coordinate system
$(U;x_{1},\ldots,x_{d})$ has the form \be \label{P}
P(x,\partial_{x})=-\sum_{i,j=1}^{d}
a_{ij}(x)\partial_{i}\partial_{j} + \sum_{i=1}^{d}
b_{i}(x)\partial_{i}+c(x),
\end{equation}
where $\partial_{i}=\partial/\partial x_{i}$. We assume that for
every $x\in M$ the real quadratic form
\be
\label{ellip} \sum_{i,j=1}^{d} a_{ij}(x)\xi_{i}\xi_{j},\;\;
\xi=(\xi_{1},\ldots,\xi_{d}) \in \Real ^d
\end{equation}
is symmetric and positive definite.

Throughout the paper we always assume that
$\lambda_0(P,M)\geq 0$. In this case we simply say that the operator $P$ is {\em nonnegative in} $M$, and write $P \geq  0$ in $M$. So, $P \geq  0$ in $M$ if and only if the equation $Pu=0$ admits a global positive solution in $M$. We note that in the symmetric case, by the Agmon-Allegretto-Piepenbrink theory \cite{Agmon82}, the nonnegativity of  $P$ in $M$  is equivalent to the nonnegativity of the associated quadratic form on $C_0^\infty(M)$.

We consider the parabolic operator $L$
\begin{equation}\label{eqL1}
  Lu=u_t+Pu \qquad \mbox{ on } M\times (0,\infty),
\end{equation}
and the corresponding homogeneous equation $Lu=0$ in $M\times (0,\infty)$.
\begin{remark}
{\em We confine ourselves to classical solutions. However, since the
results and the proofs throughout the paper rely on standard elliptic and parabolic regularity, and basic
properties and results of potential theory, the results of the
paper are also valid for weak solutions in the case where the
elliptic operator $P$ is in divergence form
\begin{equation} \label{div_P}
Pu=-\sum_{i=1}^{d}  \partial_i\left[\sum_{j=1}^{d} a_{ij}(x)\partial_j u +  u\tilde{b}_i(x) \right]  +
 \sum_{i=1}^{d} b_i(x)\partial_i u   +c(x)u,
\end{equation}
with coefficients
which satisfy standard local regularity assumptions (as for
example in Section 1.1 of \cite{Mskew}). The results are also
valid in the framework of strong solutions, where the strictly
elliptic operator $P$ is of the form \eqref{P} and has locally bounded coefficients; the proofs
differ only in minor details from the proofs given here.}
\end{remark}
We write $\Omega_1 \Subset \Omega_2$ if $\Omega_2$ is open, $\overline{\Omega_1}$ is
compact and $\overline{\Omega_1} \subset \Omega_2$.
Let $f,g \in C(\Omega)$ be nonnegative functions; we use the notation $f\asymp g$ on
$\Omega$ if there exists a positive constant $C$ such that
$$C^{-1}g(x)\leq f(x) \leq Cg(x) \qquad \mbox{ for all } x\in \Omega.$$

Let $\{M_{j}\}_{j=1}^{\infty}$ be an {\em exhaustion} of  $M$,
i.e. a sequence of smooth, relatively compact domains in $M$ such that
$M_1\neq \emptyset$, $M_j\Subset M_{j+1}$ and
$\cup_{j=1}^{\infty}M_{j}=M$. For every $j\geq 1$, we denote
$M_{j}^*=M\setminus \mbox{cl}({M_j})$. Let
$M_\infty=M\cup\{\infty\}$ be the one-point compactification of
$M$. By a neighborhood of infinity in $M$ we mean a neighborhood of $\infty$ in $M_\infty$, that is, a set of the form $M\setminus K$, where $K$ is compact in $M$.

Assume that $P\geq 0$ in $M$. For every $j\geq 1$, consider the Dirichlet heat kernel
$k_P^{M_j}(x,y,t)$ of the parabolic operator $L=\partial_t+P$ in
$M_j$. So, for every continuous function $f$ with a compact
support in $M$, the function $$u(x,t):=\int_{M_j} k_P^{M_j}(x,y,t)f(y)\dy$$
solves the initial-Dirichlet boundary value problems
 \bea\label{eqibvpj}
 Lu&=&0 \quad \mbox{ in } M_j\times (0,\infty),\nonumber\\
  u&=&0 \quad \mbox{ on  } \partial M_j\times (0,\infty),\\
  u&=&f  \quad \mbox{ on  }  M_j\times \{0\}.\nonumber
  \eea
   By the generalized maximum principle, $\{k_P^{M_j}(x,y,t)\}_{j=1}^{\infty}$ is
an increasing sequence which converges to $k_P^{M}(x,y,t)$, the
{\em positive minimal heat kernel} of the parabolic operator $L$ in $M$.

The main properties of the positive minimal heat kernel are summarized in the following lemma.
\begin{lemma}\label{lem_hkp}
Assume that $P\geq 0$ in $M$. The heat kernel $k_P^{M}(x,y,t)$ satisfies the following properties.
\begin{enumerate}
\item Positivity: $k_P^M(x,y,t)\geq 0$ for all $t\geq 0$ and $x,y\in M$.

\vskip 3mm

\item For any $\gl\leq \gl_0$ we have
\begin{equation}\label{eq_shift}
k_{P-\lambda}^M(x,y,t)=e^{\lambda t}k_P^M(x,y,t).
\end{equation}

\vskip 3mm

\item The heat kernel satisfies the semigroup identity
\begin{equation}\label{eq_sgi}
\int_\Gw k_P^M(x,z,t)k_P^M(z,y,\gt) \dz= k_P^M(x, y, t+\gt) \;\; \forall t,\gt>0, \mbox{and } x,y\in M.
    \end{equation}

\item Monotonicity: If $M_1\subset M$, and $V_1\geq V_2$, then
$$k_P^{M_1}\leq k_P^{M},\qquad \mbox{and }\quad k_{P+V_1}^{M}\leq k_{P+V_2}^{M}.$$

\vskip 3mm

\item For any fixed $y\in M$ (respectively $x\in M$), $k_P^M(x,y,t)$ solves the equation $u_t+P(x,\pd_x)u=0$
(respectively $u_t+P^*(y,\pd_y)u=0$) in $M\times (0,\infty)$.

\vskip 3mm

\item Skew product operators: Let $M=M_1\times M_2$ be a product of two manifolds, and denote $x=(x_1,x_2)\in M=M_1\times M_2$. Consider a skew product elliptic operator of the form
$P:= P_1\otimes I_2+I_1\otimes P_2$, where $P_i$ is a second-order elliptic operator on $M_i$ satisfying the assumptions of the present paper, and $I_i$ is the identity map on $M_i$, $i=1,2$.
Then
\begin{equation}\label{eq_prod}
k_{P}^{M}(x,y,t)=
k_{P_1}^{M_1}(x_1,y_1,t)k_{P_2}^{M_2}(x_2,y_2,t).
\end{equation}

\vskip 3mm

\item Eigenfunction expansion: Suppose that $P$ is a symmetric elliptic operator with (up to the boundary) smooth coefficients which is defined on a smooth bounded domain $M$. Let $\{\varphi_n\}_{n=0}^\infty$ be the complete orthonormal sequence of the (Dirichlet) eigenfunctions  of $P$ with the corresponding nondecreasing sequence of eigenvalues $\{\gl_n\}_{n=0}^\infty$. Then the heat kernel of $P$ in $M$ has the eigenfunction expansion
    \begin{equation}\label{eq_ef_exp}
    k_P^M(x,y,t) = \sum_{n=0}^\infty e^{-\lambda_n t}\phi_n(x)\phi_n(y).
\end{equation}
\vskip 3mm

\item Let $v\in \mathcal{C}_{P}(M)$ and $v^*\in
\mathcal{C}_{P^*}(M)$. Then we have \cite{Pheat,PSt}
\begin{equation}\label{eqinvar1}
\int_M k_P^M(x,y,t)v(y)\dy\leq v(x),\quad  \mbox{and}\quad  \int_M
k_P^M(x,y,t)v^*(x)\dx\leq v^*(y).
\end{equation}
Moreover, (by the maximum principle) either $$\int_M k_P^M(x,y,t)v(y)\dy< v(x) \qquad \forall (x,t)\in M\times(0,\infty),$$ or  $$\int_M k_P^M(x,y,t)v(y)\dy= v(x) \qquad \forall (x,t)\in M\times (0,\infty),$$ and in the latter case $v$ is called an {\em invariant solution} of the operator $P$ on $M$ (see for example
\cite{D1,G1,Pheat,Pinsky}).

    \end{enumerate}
\end{lemma}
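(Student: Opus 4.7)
The entire lemma is an organized list of consequences of one construction: the Dirichlet heat kernel $k_P^{M_j}$ on a relatively compact subdomain, followed by a monotone limit. So I would first settle everything at the $M_j$ level (where standard parabolic theory applies) and then pass to the limit using the generalized maximum principle. The order I would carry things out is roughly as follows.

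First, properties (1), (4) and (5) on $M_j$ are standard facts for the Dirichlet heat kernel of a strictly elliptic operator on a smooth bounded domain: $k_P^{M_j}\ge 0$ by the parabolic maximum principle, $(x,t)\mapsto k_P^{M_j}(x,y,t)$ and $(y,t)\mapsto k_P^{M_j}(x,y,t)$ solve $Lu=0$ and $L^*u=0$ respectively (with the Dirichlet identity $k_P^{M_j}(x,y,t)=k_{P^*}^{M_j}(y,x,t)$ obtained by Green's identity applied to a pair of solutions on $M_j\times(0,t)$), and the monotonicity $k_P^{M_j}\le k_P^{M_{j+1}}$ as well as $k_{P+V_1}^M\le k_{P+V_2}^M$ for $V_1\ge V_2$ come from the generalized maximum principle applied to the difference. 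Taking $j\to\infty$ and invoking parabolic regularity gives the analogous statements for $k_P^M$; positivity is strict by the strong minimum principle on $M$. For (2), I would note that if $v(x,t):=\mathrm{e}^{\lambda t}k_P^M(x,y,t)$ then $(\partial_t+P-\lambda)v=0$ with the same $\delta_y$-initial data, and the identity then follows from the uniqueness of the minimal positive solution with this initial data.

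Next, (3) is Chapman--Kolmogorov and I would obtain it first on $M_j$: fixing $y,\tau$, both sides of \eqref{eq_sgi} (with $M$ replaced by $M_j$) solve $Lu=0$ on $M_j\times(0,\infty)$ with the same initial data $k_P^{M_j}(\cdot,y,\tau)$ and Dirichlet boundary condition, so they coincide by uniqueness of the initial-boundary value problem \eqref{eqibvpj}. Passing to $j\to\infty$ via monotone convergence yields (3) on $M$. For (6), the product $k_{P_1}^{M_1}(x_1,y_1,t)k_{P_2}^{M_2}(x_2,y_2,t)$ is nonnegative, solves $(\partial_t+P_1\otimes I_2+I_1\otimes P_2)u=0$ on $M_1\times M_2\times(0,\infty)$ with initial data $\delta_{(y_1,y_2)}$, and its minimality can be verified on exhausting products $M_{1,j}\times M_{2,j}$, where the Dirichlet heat kernel factors by separation of variables and uniqueness. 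Property (7) is the spectral theorem applied to the self-adjoint realization of $P$ on $L^2(M)$: expand both the initial condition and the solution in the orthonormal basis $\{\varphi_n\}$ and identify coefficients.

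Finally, the most delicate step is (8). For $v\in\mathcal{C}_P(M)$, the function $w_j(x,t):=v(x)-\int_{M_j}k_P^{M_j}(x,y,t)v(y)\dy$ satisfies $Lw_j=0$ on $M_j\times(0,\infty)$, $w_j\ge 0$ on the parabolic boundary of $M_j\times(0,\infty)$, hence $w_j\ge 0$ by the maximum principle; letting $j\to\infty$ and applying Fatou's lemma yields $\int_M k_P^M(x,y,t)v(y)\dy\le v(x)$, and the adjoint version is obtained identically using $k_{P^*}^{M_j}(y,x,t)=k_P^{M_j}(x,y,t)$. For the dichotomy, the difference $v(x)-\int_M k_P^M(x,y,t)v(y)\dy$ is a nonnegative solution of $Lu=0$ on $M\times(0,\infty)$; the strong maximum principle in its parabolic form then forces it to be either strictly positive everywhere or identically zero. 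The main obstacle I anticipate is not in any single item but in justifying the interchange of limits (the passage from $M_j$ to $M$) uniformly enough to preserve the equations, inequalities, and boundary-type behavior used in (3), (6) and (8); this is handled throughout by combining monotone convergence with interior parabolic Schauder estimates to upgrade $L^1_\mathrm{loc}$ convergence to $C^{2,1}_\mathrm{loc}$ convergence.
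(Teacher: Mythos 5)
The paper does not prove Lemma~\ref{lem_hkp}; it states the eight properties as a summary of standard facts about the minimal heat kernel, citing \cite{Pheat,PSt} for item (8). Your outline (work on the Dirichlet kernels $k_P^{M_j}$, where everything is classical, then pass to the monotone limit, upgrading convergence by interior parabolic Schauder estimates) is the standard route and is correct in all eight items. One small point worth making explicit in (8): the strong parabolic maximum principle applied to the nonnegative solution $W(x,t):=v(x)-\int_M k_P^M(x,y,t)v(y)\,\mathrm{d}y$ propagates vanishing only \emph{backward} in time, giving $W\equiv 0$ on $M\times(0,t_0]$ from $W(x_0,t_0)=0$; to conclude $W\equiv 0$ on all of $M\times(0,\infty)$ you should then invoke the semigroup identity (3) — for $t\in(t_0,2t_0]$ write $\int_M k_P^M(x,y,t)v(y)\dy=\int_M k_P^M(x,z,t-t_0)\bigl(\int_M k_P^M(z,y,t_0)v(y)\dy\bigr)\dz=\int_M k_P^M(x,z,t-t_0)v(z)\dz=v(x)$, and iterate.
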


\begin{remark}\label{rem_inv}{\em
1. If there exists $v\in \mathcal{C}_{P}(M)$ such that $v$ is not invariant, then the positive Cauchy problem
$$Lu=0, \; u\geq 0 \quad \mbox{on } M\times (0,\infty),\quad u(x,0)=0\quad x\in M$$
does not admit a unique solution.

\vskip 3mm

2. An invariant solution is sometimes called {\em complete}. If
the constant function is an invariant solution with respect to the heat operator, then one says that the heat operator {\em conserves probability}, and the corresponding diffusion process is said to be {\em stochastically complete} \cite{Grigoryan}.
 }
\end{remark}

\begin{definition}\label{d_min_gr}{\em
Let $w$ be a positive solution of the equation $Pu=0$ in $M\setminus K$, where $K\Subset M$. We say that $w$ is a positive solution of the
equation $Pu=0$  of {\em minimal growth in a neighborhood of
infinity in} $M$ if for any  $v\in C(\mbox{cl}(M_j^*))$ which is a
positive supersolution of the equation $Pu=0$ in $M_j^*$ for some
$j\geq 1$ large enough, and satisfies $w\leq v$ on $\partial M_j^*$, we have
$w\leq v$ in $M_j^*$ \cite{Agmon82,Pheat,Pinsky}.
 }
 \end{definition}

In the subcritical (respectively, critical) case, the Green function $G_P^{M}(\cdot,y)$ (respectively, the ground state $\vgf$) is a  positive solution of the equation $Pu=0$ of minimal growth in a neighborhood of
infinity in $M$. Recall that if $\lambda<\lambda_0$, then $P-\lambda$ is
subcritical in $M$. In particular, if $P$ is critical in $M$, then $\gl_0=0$.

\vskip 3mm

Next, we recall the parabolic Harnack inequality. We denote by $Q(x_0,t_0,R,\gt)$ the parabolic box
$$Q(x_0,t_0,R,\gth):=\{(x,t)\in M\times \R\mid \gr(x,x_0)<R, t\in (t_0,t_0+\gth R^2)\},$$
where $\gr$ is the given Riemannian metric on $M$. We have:
\begin{lemma}[Harnack inequality]\label{lem_Harnack}
Let $u$ be a nonnegative solution of the equation $Lu=0$ in $Q(x_0,t_0,R,\gth)$, and assume that $\gth>1$ and $0<R<R_0$. Then
\begin{equation}\label{eq_harnack}
    u(x_0,t_0+R^2)\leq C u(x,t_0+\gth R^2)
    \end{equation}
for every $\gr(x,x_0)<R/2$, where $C=C(L,d,R_0,\gth)$. Moreover, $C$ varies within bounded bounds for all $\gth>1$ such that $0< \vge\leq (\gth-1)^{-1}\leq M$.
\end{lemma}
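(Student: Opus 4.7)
The plan is to derive this parabolic Harnack inequality from a local (small-scale) parabolic Harnack inequality via a chain-of-cylinders argument. For $L=\pd_t+P$ with $P$ of the form \eqref{P}, classical parabolic theory supplies constants $r_0>0$, $\eta_0>1$, and $c_0>0$ depending only on $d$ and on local bounds and ellipticity constants of the coefficients of $P$, such that for every $0<r\leq r_0$, every $(y,s)$ in a fixed compact set of $M\times\R$, and every nonnegative solution $v$ of $Lv=0$ in $Q(y,s,r,\eta_0)$ one has
\begin{equation*}
v(y,s+r^2)\leq c_0\, v(z,s+\eta_0 r^2)\qquad \text{for all }z\text{ with }\gr(z,y)<r/2.
\end{equation*}
In the non-divergence setting \eqref{P} with H\"older coefficients this is a consequence of the Krylov--Safonov estimates; in the divergence-form or locally bounded-coefficient settings discussed in the preceding remark, one invokes the Moser or Aronson--Serrin Harnack inequality instead.

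Next I would leverage the hypothesis $R<R_0$ to make the local constants uniform. Since the point $(x_0,t_0+R^2)$ and the box $Q(x_0,t_0,R,\gth)$ lie in a compact region that can be fixed once $R_0$ is fixed, the local constants $r_0,\eta_0,c_0$ above depend only on $L$, $d$, and $R_0$. Then, given $x$ with $\gr(x,x_0)<R/2$, I would construct a chain of points $y_0=x_0,y_1,\dots,y_N=x$ and times $t_0+R^2=s_0<s_1<\dots<s_N=t_0+\gth R^2$ such that for each $k$ the pair $((y_{k-1},s_{k-1}),(y_k,s_k))$ is related by the local Harnack inequality applied in a small parabolic subcylinder of $Q(x_0,t_0,R,\gth)$. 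Iterating gives
\begin{equation*}
u(x_0,t_0+R^2)\leq c_0^{N}\, u(x,t_0+\gth R^2),
\end{equation*}
establishing the inequality with $C=c_0^{N}$, where $N$ depends on $\gth$, $R_0$, and $d$.

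The main obstacle is the uniformity claim: the constant $C$ must remain bounded as $\gth$ varies over the stated range $0<\vge\leq(\gth-1)^{-1}\leq M$. For $\gth$ close to $1$ the available time interval $(\gth-1)R^2$ is small, forcing, by parabolic scaling, small steps simultaneously in time and space, and one has to verify that the total number of chain links $N$ needed to traverse the spatial distance $R/2$ while staying inside $Q(x_0,t_0,R,\gth)$ stays bounded by a quantity depending only on $\gth$, $d$, and $R_0$. For large $\gth$ one instead exploits the abundance of time to absorb more steps, again with a bounded count under the constraint $(\gth-1)^{-1}\geq \vge$. Once this uniform counting is carried out, $C=c_0^{N}$ yields the desired dependence $C=C(L,d,R_0,\gth)$, varying within bounded bounds over the prescribed range of $\gth$.
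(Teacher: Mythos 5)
The paper provides no proof of this lemma; it is introduced with ``Next, we recall the parabolic Harnack inequality,'' so it is cited as a classical result (Moser/Aronson--Serrin in divergence form, Krylov--Safonov in non-divergence form) rather than proved. Your chaining-from-a-local-Harnack-inequality strategy is the standard route to establishing such a statement with a tunable aspect ratio $\gth$, and the overall plan is sound.

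Two points worth tightening. First, the uniformity claim you flag as ``the main obstacle'' is less delicate than you suggest: the hypothesis $0<\vge\leq(\gth-1)^{-1}\leq M$ confines $\gth$ to the \emph{compact} interval $[1+1/M,\,1+1/\vge]$, so there is no ``large $\gth$'' regime to treat separately --- both endpoints are finite, and once you prove that the chain length $N(\gth)$ is a finite, continuous (or at least locally bounded) function of $\gth$ on $(1,\infty)$, boundedness of $C=c_0^{N(\gth)}$ over the stated range follows by compactness. Second, the chaining count itself is asserted but not carried out, and it is the only nontrivial step: you must exhibit, for each $\gth$ in the compact range, a chain of parabolic subcylinders of a fixed small aspect ratio $\eta_0$ that stays inside $Q(x_0,t_0,R,\gth)$, traverses spatial distance $R/2$, advances time from $t_0+R^2$ to $t_0+\gth R^2$, and has length $N$ controlled by $\gth$, $d$, $R_0$. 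A clean way to do this is to rescale so $R=1$, pick a spatial step $\delta$ and a matching time step $\eta_0\delta^2$, and choose $\delta$ small enough (depending on $\gth-1$, hence bounded below by a quantity depending only on $M$) that $\lceil 1/(2\delta)\rceil$ spatial steps consume at most $(\gth-1)$ units of time; then $N\sim\delta^{-1}$ is explicitly bounded. Without that explicit construction the argument remains a sketch; with it, your proposal is a complete and standard proof of the recalled lemma.
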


Let $v\in \mathcal{C}_{P}(M)$ and $v^*\in
\mathcal{C}_{P^*}(M)$. Then the parabolic Harnack inequality \eqref{eq_harnack}, and (\ref{eqinvar1}) imply the following important estimate
\begin{equation}\label{eqHar1}
k_P^M(x,y,t)\leq c_1(y)v(x), \qquad {\and }\; k_P^M(x,y,t)\leq
c_2(x)v^*(y)
\end{equation}
for all $x,y\in M$ and $t>1$ (see \cite[(3.29--30)]{Pheat}). Recall that in
the critical case, by uniqueness, $v$ and $v^*$ are (up to a multiplicative constant) the ground states
$\varphi$ and $\varphi^*$ of $P$ and $P^*$ respectively. Moreover, it is known that the ground state $\varphi$ (respectively,
$\varphi^*$) is a positive invariant solution of the operator $P$
(respectively, $P^*$) in $M$. So,
\begin{equation}\label{eqinvar}
\int_M k_P^M(x,y,t)\varphi(y)\dy= \varphi(x),\;\;  \mbox{and }
\int_M k_P^M(x,y,t)\varphi^*(x)\dx= \varphi^*(y).
\end{equation}

\vskip 3mm
We distinguish between two types of criticality.
\begin{definition}\label{defnull}{\em
A critical operator $P$ is said to be {\em positive-critical} in
$M$ if $\varphi^*\varphi\in L^1(M)$, and {\em null-critical} in
$M$ if $\varphi^*\varphi\not\in L^1(M)$.
 }\end{definition}
\begin{remark}{\em Let $\mathbf{1}$ be the constant function on $M$,
taking at any point $x\in M$ the value $1$. Suppose that
$P\mathbf{1}=0$. Then $P$ is subcritical (respectively,
positive-critical, null-critical) in $M$ if and only if the
corresponding diffusion process is transient (respectively,
positive-recurrent, null-recurrent). For a thorough discussion of the probabilistic interpretation of criticality theory,  see \cite{Pinsky}.

 In fact, in the critical case it is natural to use the well known (Doob)
$h$-transform with $h=\varphi$, where $\varphi$ is the ground
state of $P$. So,
$$P^\varphi u:=\frac{1}{\varphi}P(\varphi u)\quad\mbox{ and therefore}\quad
k_{P^\varphi}^M(x,y,t)=\frac{1}{\varphi(x)}k_{P}^M(x,y,t)\varphi(y).$$

Clearly, $P^\varphi$ is an elliptic operator which satisfies all
our assumptions. Note that $P^\varphi$ is null-critical
(respectively, positive-critical) if and only if $P$ is
null-critical (respectively, positive-critical), and the ground
states of $P^\varphi$ and $(P^\varphi)^*$ are $\mathbf{1}$ and
$\varphi^*\varphi$, respectively. Moreover,
$$\lim_{t\to\infty}k_{P^\varphi}^M(x,y,t)=0 \quad \mbox{ if and
only if } \quad
 \lim_{t\to\infty}k_{P}^M(x,y,t)=0.$$ Therefore, in the critical case, we may assume that $$\mbox{{\bf
(A)}}\qquad \qquad\qquad P\mathbf{1}=0, \mbox{ and } P \mbox{ is a
critical operator in } M.\qquad\qquad\mbox{}$$
 }\end{remark}

It is well known that on a general noncompact manifold $M$, the
solution of the Cauchy problem for the parabolic equation $Lu=0$
is not uniquely determined (see for example \cite{IM} and the
references therein). On the other hand, under Assumption {\bf
(A)}, there is a unique {\em minimal} solution of the Cauchy
problem and of certain initial-boundary value problems for {\em
bounded} initial and boundary conditions. More precisely,
\begin{definition}\label{defCp}{\em Assume that $P\mathbf{1}=0$.
Let $f$ be a {\em bounded} continuous function on $M$.  By the {\em
minimal solution} $u$ of the Cauchy problem \bean\label{eqCp}
 Lu&=&0 \quad \mbox{ in } M\times (0,\infty),\\
  u&=&f  \quad \mbox{ on  }  M\times \{0\},
  \eean
we mean the function
\begin{equation}\label{eqminim}
u(x,t):=\int_{M} k_P^{M}(x,y,t)f(y)\dy.
 \end{equation}
 }\end{definition}
 Note that \eqref{eqHar1} implies that $u$ in \eqref{eqminim} is well defined.
\begin{definition}\label{defibvp}{\em Assume that $P\mathbf{1}=0$.
Let $B\Subset M_1$ be a smooth bounded domain such that
$B^*:=M\setminus \mbox{cl}(B)$ is connected. Assume that $f$ is a
bounded continuous function on $B^*$, and $g$ is a bounded
continuous function on $\partial B\times (0,\infty)$.  By the {\em
minimal solution} $u$ of the initial-boundary value problem
 \bea\label{eqibvpfg}
 Lu&=&0 \quad \mbox{ in } B^*\times (0,\infty),\nonumber\\
  u&=&g \quad \mbox{ on  } \partial B\times (0,\infty),\\
  u&=&f  \quad \mbox{ on  }  B^*\times \{0\}\nonumber,
  \eea
we mean the limit of the solutions $u_j$ of the following
initial-boundary value problems \bean\label{eqibvpfgk}
 Lu&=&0 \quad \mbox{ in } (B^*\cap M_j)\times (0,\infty),\\
  u&=&g \quad \mbox{ on  } \partial B\times (0,\infty),\\
    u&=&0 \quad \mbox{ on  } \partial M_j\times (0,\infty),\\
  u&=&f  \quad \mbox{ on  }  (B^*\cap M_j)\times \{0\}.
  \eean
}\end{definition}
\begin{remark}{\em
It can be easily checked that the sequence $\{u_j\}$ is indeed a
converging sequence which converges  to a solution of the
initial-boundary value problem (\ref{eqibvpfg}).
 }\end{remark}
Next, we recall some results concerning the theory of positive solutions of {\em elliptic} equations that we shall need in the sequel.

The first result is a {\em Liouville comparison theorem} in the symmetric case.
\begin{theorem}[\cite{Pliouv}]\label{mainthmLt}
Let $P_0$ and $P_1$ be two symmetric operators defined on $M$ of the form
\begin{equation}\label{PS}
P_ju=-m_j^{-1} \mathrm{div}(m_jA_j\nabla u) +V_ju \qquad j=0,1.
\end{equation}
Assume that the following assumptions hold true.
\begin{itemize}
\item[(i)] The operator  $P_0$ is critical in $M$. Denote
by $\varphi\in \mathcal{C}_{P_0}(M)$ its ground state.

\item[(ii)]  $P_1\geq 0$ in $M$, and there exists a
real function $\psi\in H^1_{\mathrm{loc}}(M)$ such that
$\psi_+\neq 0$, and $P_1\psi \leq 0$ in $M$, where
$u_+(x):=\max\{0, u(x)\}$.

\item[(iii)] The following matrix inequality holds
\begin{equation}\label{psialephia}
(\psi_+)^2(x) m_1(x)A_1(x)\leq C\varphi^2(x) m_0(x)A_0(x)\qquad  \mbox{for a.e. }
x\in  M,
\end{equation}
where $C>0$ is a positive constant.
\end{itemize}
Then the operator $P_1$ is critical in $M$, and $\psi$ is its
ground state. In particular, $\dim \mathcal{C}_{P_1}(M)=1$
and $\lambda_0(P_1,M)=0$.
\end{theorem}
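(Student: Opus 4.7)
The plan is to exploit the null-sequence characterization of criticality for symmetric operators in divergence form (Agmon--Allegretto--Piepenbrink, as refined by Pinchover--Tintarev): a symmetric operator $P$ of the form \eqref{PS} satisfying $P\ge 0$ in $M$ is critical there if and only if it admits a \emph{null sequence} $\{w_k\}\subset C_c^\infty(M)$ with $w_k\ge 0$, $Q_P(w_k)\to 0$, and $w_k\to \varphi_P$ locally, where $Q_P$ is the quadratic form of $P$ and $\varphi_P$ denotes the ground state. Since $P_0$ is critical with ground state $\varphi$, such a sequence exists; by the Fitzgibbon--Pinchover ground-state transform
\[
Q_{P_0}(\varphi\eta)=\int_M m_0\varphi^2 A_0|\nabla\eta|^2\,\dx \qquad \forall \eta\in C_c^\infty(M),
\]
which follows by testing $P_0\varphi=0$ against $\varphi\eta^2$, it may be produced in the form $w_k=\varphi\eta_k$ with $\eta_k\in C_c^\infty(M)$, $0\le \eta_k\le 1$, $\eta_k\to 1$ locally, and $\int_M m_0\varphi^2 A_0|\nabla\eta_k|^2\,\dx\to 0$. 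I then want to use these same cutoffs $\eta_k$ to build a null sequence for $P_1$ whose limit is $\psi_+$.

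The core of the argument is an energy inequality. I would test the weak inequality $P_1\psi\le 0$ against the nonnegative function $\psi_+\eta^2$ (truncating $\psi_+$ at height $n$ for admissibility and then passing $n\to\infty$), and apply Stampacchia's chain rule in the form $\nabla\psi_+=\chi_{\{\psi>0\}}\nabla\psi$ together with $\psi\psi_+=\psi_+^2$. Integration by parts yields
\[
\int_M m_1 A_1\bigl(\eta^2\,\nabla\psi_+\!\cdot\!\nabla\psi_+ + 2\eta\,\psi_+\,\nabla\psi_+\!\cdot\!\nabla\eta\bigr)\,\dx + \int_M m_1 V_1\psi_+^2\eta^2\,\dx\ \le\ 0.
\]
On the other hand, expanding $Q_{P_1}(\psi_+\eta)=\int_M m_1 A_1\,\nabla(\psi_+\eta)\!\cdot\!\nabla(\psi_+\eta)\,\dx+\int_M m_1 V_1\psi_+^2\eta^2\,\dx$ and subtracting the above, the interior and cross terms cancel, leaving
\[
Q_{P_1}(\psi_+\eta)\ \le\ \int_M m_1 A_1\,\psi_+^2\,\nabla\eta\!\cdot\!\nabla\eta\,\dx\ \le\ C\int_M m_0\varphi^2 A_0\,\nabla\eta\!\cdot\!\nabla\eta\,\dx\ =\ C\,Q_{P_0}(\varphi\eta),
\]
where the middle inequality is precisely the matrix comparison hypothesis~(iii), applied pointwise inside the integrand.

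Specializing $\eta=\eta_k$ and approximating $\psi_+\eta_k$ by smooth compactly supported functions (standard truncation-plus-mollification) produces a sequence $\{\tilde w_k\}\subset C_c^\infty(M)$ with $Q_{P_1}(\tilde w_k)\to 0$ and pointwise limit $\psi_+\not\equiv 0$. Combined with the hypothesis $P_1\ge 0$, this is exactly the null-sequence criterion: $P_1$ is critical in $M$, and $\psi_+$ must coincide, up to a positive scalar, with its ground state; in particular $\psi_+>0$ everywhere, so $\psi=\psi_+$ is itself a positive solution, and hence the ground state. The assertions $\dim\mathcal{C}_{P_1}(M)=1$ and $\lambda_0(P_1,M)=0$ then follow from the general structure of critical operators recalled in Section~\ref{sec1}. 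The main technical obstacle I foresee is the admissibility of the test function $\psi_+\eta^2$: since $\psi\in H^1_{\mathrm{loc}}(M)$ may be unbounded, one must carry out the integration by parts at the truncated level $\min(\psi_+,n)$ and pass $n\to\infty$ by monotone/dominated convergence, invoking Stampacchia's chain rule throughout; a secondary point is verifying that the regularized sequence $\tilde w_k$ genuinely realizes the null-sequence criterion in the weighted divergence-form setting of \eqref{PS}.
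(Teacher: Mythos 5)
The paper itself does not prove this theorem; it cites it from \cite{Pliouv}, whose proof is precisely the null-sequence argument you reconstruct here. Your computation is the standard one: testing $P_1\psi\le 0$ against $\psi_+\eta^2$, using Stampacchia's chain rule and $\psi\psi_+=\psi_+^2$, cancelling the gradient and cross terms against the expansion of $Q_{P_1}(\psi_+\eta)$, and then invoking the matrix hypothesis (iii) together with the ground-state transform identity $Q_{P_0}(\varphi\eta)=\int_M m_0\varphi^2 A_0|\nabla\eta|^2\dx$ to obtain $Q_{P_1}(\psi_+\eta_k)\le C\,Q_{P_0}(\varphi\eta_k)\to 0$. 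Combined with the Pinchover--Tintarev ground-state alternative, this is exactly the proof in the cited reference, and the technical caveats you flag (truncation of $\psi_+$ to justify the test function, mollification to land in $C_c^\infty$, and the $L^2_{\mathrm{loc}}$ normalization built into the null-sequence criterion) are the right ones to address.
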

In the sequel we shall also need to use results concerning small and semismall perturbations of a subcritical elliptic operator. These notions were introduced in \cite{P89} and \cite{Msemismall}
respectively, and are closely related to the stability of
$\mathcal{C}_P(\Omega)$ under perturbation by a potential $V$.
\begin{definition} \label{spertdef}{\em
Let $P$ be a subcritical operator in $M$, and let $V$ be a
real valued potential defined on $M$.

\vskip 2mm

{\em (i)} We say that $V$ is a {\em small perturbation}
 of $P$ in $M$ if
\be \label{sperteq} \lim_{j\rightarrow \infty}\left\{\sup_{x,y\in
M_{j}^*} \int_{M_{j}^*}\frac{\Green{M}{P}{x}{z}|V(z)|
\Green{M}{P}{z}{y}}{\Green{M}{P}{x}{y}}\dz\right\}=0.
\end{equation}

\vskip 2mm

{\em (ii)} $V$ is a {\em  semismall perturbation} of
$P$ in $M$ if for some $x_0\in M$ we have
 \be \label{semisperteq} \lim_{j\rightarrow
\infty}\left\{\sup_{y\in M_{j}^*} \int_{M_{j}^*}
\frac{\Green{M}{P}{x_0}{z}|V(z)|\Green{M}{P}{z}{y}}
{\Green{M}{P}{x_0}{y}}\dz\right\}=0. \end{equation}
 }
 \end{definition}

\vskip 3mm

 Recall that small perturbations are semismall \cite{Msemismall}. For semismall perturbations we have
\begin{theorem}[\cite{Msemismall,P89,P90}]\label{thmssp}
Let~$P$ be a subcritical operator in~$M$.
Assume that $V=V_+-V_-$
is a semismall perturbation of~$P^*$ in~$M$
satisfying~$V_- \not=0$, where  $V_\pm(x)=\max\{0, \pm V(x)\}$.

 Then there exists $\alpha_0>0$ such that $P_\alpha:= P+\alpha V$
 is subcritical  in $M$ for all $0\leq \alpha< \alpha_0$
 and critical for $\alpha=\alpha_0$.

 Moreover, let $\varphi$ be the
ground state of $P+\alpha_0 V$ and let $y_0$ be a fixed reference point in $M_1$.
Then  for any $0\leq \alpha< \alpha_0$
$$   \varphi   \asymp   \Green{M}{P_\alpha}{\cdot}{y_0} \qquad \mbox {in } M_1^*,
$$
where the equivalence constant depends on $\ga$.
\end{theorem}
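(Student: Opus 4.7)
My plan is to define $\alpha_0 := \sup\{\alpha \geq 0 : P_\alpha \text{ is subcritical in } M\}$, show $0 < \alpha_0 < \infty$, identify the subcritical regime as $[0, \alpha_0)$ via a Neumann-series / resolvent argument driven by the semismall perturbation hypothesis, deduce criticality at $\alpha_0$ by maximality, and construct the ground state as a renormalized limit of Green functions. For $\alpha_0 > 0$, I would solve the resolvent identity
$G_{P_\alpha}^M(x, y) = G_P^M(x, y) - \alpha \int_M G_P^M(x, z) V(z) G_{P_\alpha}^M(z, y) \dz$
by Neumann iteration. The semismall perturbation hypothesis on $V$ with respect to $P^*$ supplies, for any $\varepsilon > 0$, an index $j$ with $\sup_{y \in M_j^*} \int_{M_j^*} G_P^M(x_0, z) |V(z)| G_P^M(z, y) / G_P^M(x_0, y) \dz < \varepsilon$, which makes the integral operator a contraction in a weighted $L^\infty$ norm on $M_j^*$; combined with a direct estimate on the compact complement, this yields $G_{P_\alpha}^M$ for all small $\alpha \geq 0$ with $G_{P_\alpha}^M(\cdot, y_0) \asymp G_P^M(\cdot, y_0)$ on $M_1^*$. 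For $\alpha_0 < \infty$, I would use $V_- \not\equiv 0$: pick a smooth subdomain $\Omega \Subset M$ meeting $\{V_- > 0\}$ in a set of positive measure, and invoke the generalized Rayleigh characterization of the principal eigenvalue on $\Omega$ (passing to an $h$-transform by a positive solution of $Pu = 0$ in the nonsymmetric case) to obtain $\lambda_0(P_\alpha, M) \leq \lambda_0(P_\alpha, \Omega) \to -\infty$ as $\alpha \to \infty$.

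Next I would prove that $P_\alpha$ is subcritical with $G_{P_\alpha}^M(\cdot, y_0) \asymp G_P^M(\cdot, y_0)$ on $M_1^*$ for every $\alpha \in [0, \alpha_0)$ by bootstrapping: whenever $P_\beta$ is subcritical with this equivalence, the equivalence itself transfers the semismall property of $V$ to $P_\beta^*$, so the Neumann construction applied at $\beta$ delivers an open neighborhood of $\beta$ inside the subcritical set along with the propagated equivalence. Concavity of $\alpha \mapsto \lambda_0(P_\alpha, M)$ makes the nonnegativity set an interval, which combined with this openness forces the subcritical set to coincide with $[0, \alpha_0)$. Criticality at $\alpha_0$ then follows by contradiction: a subcritical $P_{\alpha_0}$ would extend the subcritical regime beyond $\alpha_0$, violating maximality.

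For the ground state, I would take $\alpha_n \nearrow \alpha_0$, fix $x_1 \in M_1 \setminus \{y_0\}$, and set $u_n := G_{P_{\alpha_n}}^M(\cdot, y_0) / G_{P_{\alpha_n}}^M(x_1, y_0)$. The previous step gives $u_n \asymp G_P^M(\cdot, y_0) / G_P^M(x_1, y_0)$ on $M_1^*$ with constants uniform in $n$ (controlled along $\alpha_n \to \alpha_0^-$ by a monotone-convergence argument on the Neumann iteration). Harnack's inequality and elliptic compactness deliver a locally uniform subsequential limit $\varphi$ on $M \setminus \{y_0\}$ satisfying $P_{\alpha_0} \varphi = 0$. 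Criticality of $P_{\alpha_0}$ forces $G_{P_{\alpha_n}}^M(x_1, y_0) \to \infty$---otherwise an unnormalized limit would produce a finite Green function for $P_{\alpha_0}$, contradicting criticality---and this divergence absorbs the pole of $G_{P_{\alpha_n}}^M(\cdot, y_0)$ at $y_0$, extending $\varphi$ across $y_0$ to a global positive solution of $P_{\alpha_0} u = 0$. Uniqueness in the critical case identifies $\varphi$ with the Agmon ground state, and passing to the limit in the uniform bound on $u_n$ yields $\varphi \asymp G_P^M(\cdot, y_0) \asymp G_{P_\alpha}^M(\cdot, y_0)$ on $M_1^*$ for each $\alpha \in [0, \alpha_0)$.

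The hardest step will be the absorption-of-singularity mechanism in the last paragraph: showing that the divergence $G_{P_{\alpha_n}}^M(x_1, y_0) \to \infty$ (forced by criticality at $\alpha_0$) cancels exactly the pole of $G_{P_{\alpha_n}}^M(\cdot, y_0)$ at $y_0$, so that $\varphi$ extends from $M \setminus \{y_0\}$ to all of $M$ as a positive solution. This is the mechanism by which the criticality of $P_{\alpha_0}$ is converted into a globally defined ground state, and it is what ultimately links the Green function of $P_\alpha$ for $\alpha < \alpha_0$ to the ground state at $\alpha = \alpha_0$.
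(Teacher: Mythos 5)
The paper cites this theorem from \cite{Msemismall,P89,P90} without giving its own proof, so I am evaluating your proposal against the standard arguments in those references, which your plan does track in broad outline: Neumann iteration at small coupling, propagation via transfer of the semismall property, and the ground state as a renormalized Green-function limit. That said, there are three places where the argument as written either misstates a hypothesis or leaves a real gap.

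First, the $3$-$G$ condition you display is the semismall condition for $P$, not for $P^*$. Unwinding the definition via $G_{P^*}^M(a,b)=G_P^M(b,a)$, the hypothesis actually gives
\[
\lim_{j\to\infty}\sup_{x\in M_j^*}\int_{M_j^*}\frac{G_P^M(x,z)\,|V(z)|\,G_P^M(z,y_0)}{G_P^M(x,y_0)}\,\mathrm{d}z=0,
\]
with $y_0$ the fixed reference point and the supremum taken in the \emph{first} variable. This is precisely what the Neumann series for $G_{P_\alpha}^M(\cdot,y_0)$ needs, so the conclusion is right, but the hypothesis should be transposed correctly; otherwise it reads as if a different assumption is being used.

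Second, concavity of $\alpha\mapsto\lambda_0(P_\alpha,M)$ shows only that the nonnegativity set $\{\alpha\geq 0:\lambda_0(P_\alpha,M)\geq 0\}$ is an interval; it does not by itself force the \emph{subcritical} set to be an interval, since $\lambda_0$ may vanish on a subinterval and subcriticality/criticality are not distinguished by the sign of $\lambda_0$ alone. The correct tool here is the log-convexity of the heat kernel in the coupling constant, displayed in the paper as
\[
k_{P_\alpha}^M(x,y,t)\leq \bigl[k_{P_0}^M(x,y,t)\bigr]^{1-s}\bigl[k_{P_1}^M(x,y,t)\bigr]^{s},\qquad \alpha=(1-s)\alpha_0+s\alpha_1.
\]
Integrating in $t$ and using H\"older gives the analogous estimate for Green functions, which shows directly that if $P_0$ and $P_{\alpha_1}$ are subcritical then so is $P_\alpha$ for $\alpha$ in between; this, together with your openness-by-bootstrapping, is what establishes that the subcritical set is exactly $[0,\alpha_0)$.

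Third, your deduction of criticality at $\alpha_0$ by ``maximality'' has a hidden step: ``a subcritical $P_{\alpha_0}$ would extend the subcritical regime'' requires applying the bootstrapping at $\alpha_0$, and for that you first need to know that $V$ is semismall for $P_{\alpha_0}^*$, i.e.\ that $G_{P_{\alpha_0}}^M(\cdot,y_0)\asymp G_P^M(\cdot,y_0)$ near infinity. This is not free: it must be obtained by letting $\alpha\nearrow\alpha_0$ along $[0,\alpha_0)$ and using that the Neumann-series equivalence constants are uniform on that interval (which your monotone-convergence remark gestures at but does not connect to this point). Alternatively, and more cleanly, one can first build the limit $\varphi$ of the normalized Green functions, observe that the uniform equivalence forces $\varphi$ to be a positive solution of minimal growth at infinity, and then conclude criticality because a subcritical operator cannot admit a globally positive solution of minimal growth (its minimal-growth solutions are multiples of the Green function and therefore singular). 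This reverses the order of your last two steps and removes the circularity risk of invoking uniqueness of the ground state before criticality has been secured.

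With these repairs the argument matches the approach of the cited references, including the singularity-absorption step, which you correctly flag as the delicate point.
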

%

 \mysection{Capacitory potential and heat content}\label{secaux}
Our first result concerning the large time behavior of positive solutions is given by the following simple lemma that does not distinguish between null-critical and positive-critical operators.
\label{Auxiliary results}
\begin{lemma}\label{lem3}
 Assume that $P\mathbf{1}=0$ and that $P$ is critical in $M$. Let
$B:=B(x_0,\delta)\subset\subset M$ be the ball of radius $\delta$
centered at $x_0$, and suppose that $B^*=M\setminus \mbox{cl}(B)$
is connected. Let $w$ be {\bf the heat content of $B^*$}, i.e. the
minimal nonnegative solution of the following initial-boundary
value problem
 \bea\label{eqw3}
 Lu&=&0 \quad \mbox{ in } B^*\times (0,\infty),\nonumber\\
  u&=&0 \quad \mbox{ on  } \partial B\times (0,\infty),\\
  u&=&1  \quad \mbox{ on  }  B^*\times \{0\}.\nonumber
  \eea
Then $w$ is a decreasing function of $t$, and
$\lim_{t\to\infty}w(x,t)=0$ locally uniformly in $B^*$.
\end{lemma}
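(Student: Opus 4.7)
The approach is to show monotonicity of $w$ in $t$ by comparison, identify the pointwise limit $w_\infty$ as a bounded elliptic solution in $B^*$ vanishing on $\partial B$, and then eliminate $w_\infty$ via criticality after extending it to a positive supersolution on the whole of $M$. The pointwise bounds $0\le w\le 1$ follow by applying the parabolic maximum principle to $w$ and to $\mathbf{1}-w$ on the exhausting pieces $(B^*\cap M_j)\times(0,\infty)$ (using $P\mathbf{1}=0$). Fixing $\sigma>0$, the shifted function $w_\sigma(x,t):=w(x,t+\sigma)$ solves the same IBVP \eqref{eqw3} but with initial datum $w(\cdot,\sigma)\le 1=w(\cdot,0)$, so parabolic comparison on each exhausting piece gives $w_\sigma\le w$, hence $w$ is decreasing in $t$. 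By parabolic regularity and the Harnack inequality (Lemma~\ref{lem_Harnack}), $w_\infty(x):=\lim_{t\to\infty}w(x,t)$ is a continuous, bounded, nonnegative classical solution of $Pu=0$ in $B^*$ with $w_\infty|_{\partial B}=0$; moreover $w_\infty<1$ in $B^*$ by the strong maximum principle.

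Extend $w_\infty$ by $0$ on $\overline B$ to the continuous function $\widetilde w_\infty$ on $M$ and set $F:=\mathbf{1}-\widetilde w_\infty>0$. Passing $t\to\infty$ by monotone convergence in the semigroup identity $w(x,t+\tau)=\int_{B^*}k_P^{B^*}(x,y,\tau)w(y,t)\dy$ shows $w_\infty$ is invariant under the Dirichlet $B^*$-semigroup. Combined with the domain monotonicity $k_P^{B^*}\le k_P^M$ (Lemma~\ref{lem_hkp}(4)) and the invariance of $\mathbf{1}$ for the full $M$-semigroup (Lemma~\ref{lem_hkp}(8) together with Assumption~{\bf (A)}), this yields
\[
\int_M k_P^M(x,y,\tau)F(y)\dy\;\le\; F(x)\qquad \forall (x,\tau)\in M\times(0,\infty),
\]
that is, $F$ is a bounded, strictly positive supersolution of $Pu=0$ on $M$ in the semigroup (equivalently distributional) sense. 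Since $P$ is critical in $M$ with $\dim\mathcal{C}_P(M)=1$ spanned by $\mathbf{1}$---and, as recalled in Section~\ref{Introduction}, every positive supersolution in the critical case is a solution---we conclude $F\equiv c\mathbf{1}$, and $F|_{\overline B}=1$ pins $c=1$, giving $w_\infty\equiv 0$ on $B^*$. Local uniform convergence then follows from Dini's theorem applied to the decreasing sequence $w(\cdot,t)\searrow 0$ of continuous functions on $B^*$.

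The main obstacle is that $F$ is only continuous on $M$, being $C^2$ only away from $\partial B$, so applying the critical-supersolution dichotomy requires its weak formulation; concretely, the distributional contribution of $PF$ on $\partial B$ reduces via integration by parts to the nonnegative boundary measure $(a_{ij}\partial_j w_\infty)n_i^B\dS|_{\partial B}$, nonnegativity being the Hopf boundary-point lemma applied to $w_\infty\ge 0$ with $w_\infty|_{\partial B}=0$.
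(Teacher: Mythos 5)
Your proposal is correct, but it takes a genuinely different route from the paper's at the decisive step. You both derive monotonicity of $w$ in $t$ (you by shifted-initial-data comparison, the paper by the semigroup identity) and you both identify the limit $w_\infty$ as a bounded solution of $Pu=0$ in $B^*$ vanishing on $\partial B$. The divergence is in how criticality kills $w_\infty$. The paper stays entirely inside $B^*$: since $P$ is critical with ground state $\mathbf{1}$, the function $\mathbf{1}$ is a positive solution of minimal growth near infinity, so $\mathbf{1}$ is the \emph{minimal} positive solution of $Pu=0$ in $B^*$ with boundary data $1$ on $\partial B$; comparing it with $\mathbf{1}-w_\infty$ gives $\mathbf{1}\le \mathbf{1}-w_\infty$, forcing $w_\infty\le 0$, hence $w_\infty=0$. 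You instead extend $F=\mathbf{1}-\widetilde{w}_\infty$ to all of $M$, show it is excessive via the semigroup inequality, and invoke the fact that in the critical (recurrent) case every positive supersolution on $M$ is a solution, hence $F\equiv c\mathbf{1}$ with $c=1$. Both characterizations of criticality are legitimate, and your derivation of the excessivity inequality (using domain monotonicity of the heat kernel and invariance of $\mathbf{1}$) is sound.

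What the paper's route buys is that it sidesteps any regularity question: everything lives in the open set $B^*$ where $w_\infty$ is a classical solution, and the comparison is with another classical solution. Your route pays a technical price: $F$ is merely continuous across $\partial B$, so ``positive supersolution $\Rightarrow$ solution'' must be applied in a weak/measure sense, and you need the Hopf boundary-point lemma to show the singular part of $PF$ on $\partial B$ is a nonnegative (indeed, strictly positive if $w_\infty\not\equiv 0$) measure. You correctly flag this and sketch the fix, so there is no real gap, but it is worth knowing that the argument can be run without ever leaving $B^*$, which is exactly what the paper does.

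One minor point: you write that the semigroup inequality is ``equivalently distributional''; for merely continuous $F$ this equivalence (excessive $\Leftrightarrow$ distributional supersolution) is a standard but nontrivial potential-theoretic fact, not an immediate identity, and deserves at least a citation if you keep this route.
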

\begin{proof} Clearly,
\begin{equation}\label{eqw4}
w(x,t)=\int_{B^*} k_P^{B^*}(x,y,t)\dy<\int_{M}
k_P^{M}(x,y,t)\dy=1 .
\end{equation}
 It follows that $0<w<1$ in $B^*\times (0,\infty)$. Let
 $\varepsilon>0$.
By the semigroup identity and (\ref{eqw4}),
 \bea\label{eqw5}
w(x,t+\varepsilon)=\int_{B^*}
k_P^{B^*}(x,y,t+\varepsilon)\dy&=&\nonumber\\[2mm] \int_{B^*}
\left(\int_{B^*}k_P^{B^*}(x,z,t)k_P^{B^*}(z,y,\varepsilon)\dz\right)
\dy&=&\\[2mm]
\int_{B^*}k_P^{B^*}(x,z,t)\left(\int_{B^*}k_P^{B^*}(z,y,\varepsilon)\dy
\right)\dz&<&\\[2mm]
\int_{B^*}k_P^{B^*}(x,z,t)\dz =w(x,t).\nonumber
\eea Hence, $w$ is a decreasing function of $t$, and therefore,
$\lim_{t\to\infty}w(x,t)$ exists. We denote
$v(x):=\lim_{t\to\infty}w(x,t)$. Note that the above argument shows that even in the subcritical case $w$ is a decreasing function of $t$.

For $\tau>0$, consider the function $\nu(x,t;\tau):=w(x,t+\tau)$,
where $t>-\tau$. Then $\nu(x,t;\tau)$ is a nonnegative solution of
the parabolic equation $Lu=0$ in $B^*\times (-\tau,\infty)$ which
satisfies $u=0$ on $\partial B\times (-\tau,\infty)$. By a
standard parabolic argument as $\tau\to\infty$, any converging
subsequence of this set of solutions converges locally uniformly
to a solution of the parabolic equation $Lu=0$ in $B^*\times
\mathbb{R}$ which satisfies $u=0$  on $\partial B\times
\mathbb{R}$. Since
$\lim_{\tau\to\infty}\nu(x,t;\tau)=\lim_{\tau\to\infty}w(x,\tau)=v(x)$,
the limit does not depend on $t$, and $v$ is a solution of the
elliptic equation $Pu=0$ in $B^*$, and satisfies $v=0$  on
$\partial B$. Furthermore, $0\leq v<\mathbf{1}$.

Therefore, $\mathbf{1}-v$ is a positive solution of the equation
$Pu=0$ in $B^*$ which satisfies $u=1$  on $\partial B$. On the
other hand, it follows from the criticality assumption that
$\mathbf{1}$ is the minimal positive solution of the equation
$Pu=0$ in $B^*$ which satisfies $u=1$  on $\partial B$. Thus,
$\mathbf{1}\leq \mathbf{1}-v$, and therefore, $v=0$.
 \end{proof}
\begin{definition}\label{defcapac}
{\em Let $B:=B(x_0,\delta)\subset\subset M$. Suppose that
$B^*=M\setminus \mbox{cl}(B)$ is connected. The nonnegative
(minimal) solution $$v(x,t)=\mathbf{1}-\int_{B^*}
k_P^{B^*}(x,y,t)\dy$$ is called the {\em parabolic capacitory
potential of $B^*$}. Note that $v$ is indeed the minimal
nonnegative solution of the initial-boundary value problem
 \bea\label{fet}
 Lu&=&0 \quad \mbox{ in } B^*\times (0,\infty),\nonumber\\
  u&=&1 \quad \mbox{ on  } \partial B\times (0,\infty),\\
  u&=&0  \quad \mbox{ on  }  B^*\times \{0\}.\nonumber
  \eea
 }\end{definition}

\begin{corollary}\label{lem2}
Under the assumptions of Lemma \ref{lem3},  the parabolic
capacitory potential $v$ of $B^*$  is an increasing function of
$t$, and we have $\lim_{t\to\infty}v(x,t)=1$ locally uniformly in $B^*$.
\end{corollary}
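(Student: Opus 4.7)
The plan is to observe that the corollary is an immediate consequence of Lemma~\ref{lem3} via the identity
\[
v(x,t)=\mathbf{1}-w(x,t),
\]
which is built into Definition~\ref{defcapac}. Indeed, the formula
$v(x,t)=\mathbf{1}-\int_{B^*}k_P^{B^*}(x,y,t)\dy$ given there, together with the representation
$w(x,t)=\int_{B^*}k_P^{B^*}(x,y,t)\dy$ used in the proof of Lemma~\ref{lem3}, yields this identity directly. (As a sanity check, under Assumption~\textbf{(A)} one can also verify it via uniqueness of the minimal solution: both $w+v$ and $\mathbf{1}$ solve $Lu=0$ in $B^*\times(0,\infty)$ with boundary data $1$ on $\partial B\times(0,\infty)$ and initial data $1$ on $B^*\times\{0\}$, and hence $w+v\equiv \mathbf{1}$.)

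With this identity in hand, the two assertions of the corollary are read off from Lemma~\ref{lem3}. First, since $t\mapsto w(x,t)$ is decreasing, $t\mapsto v(x,t)=1-w(x,t)$ is increasing. Second, since $w(x,t)\to 0$ locally uniformly in $B^*$ as $t\to\infty$, we get
\[
\lim_{t\to\infty} v(x,t)=1-\lim_{t\to\infty} w(x,t)=1
\]
locally uniformly in $B^*$.

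There is no genuine obstacle: once one recognizes that the capacitory potential and the heat content are complementary solutions of the same parabolic problem (a consequence of $P\mathbf{1}=0$ and the uniqueness of the minimal solution of the initial--boundary value problem under Assumption~\textbf{(A)}), the corollary is simply a restatement of Lemma~\ref{lem3}. The only point worth flagging is the role of Assumption~\textbf{(A)}: criticality together with $P\mathbf{1}=0$ is precisely what forces $\mathbf{1}$ to be the \emph{minimal} positive solution on $B^*$ with boundary value $1$, which is the ingredient used in Lemma~\ref{lem3} to identify the limit of $w$ as $0$ (and hence, by complementarity, the limit of $v$ as $1$).
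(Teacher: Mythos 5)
Your proof is correct and follows essentially the same route as the paper: both rely on the identity $v(x,t)=\mathbf{1}-w(x,t)$ (which, as you note, is immediate from Definition~\ref{defcapac} and the formula for $w$ in the proof of Lemma~\ref{lem3}) and then read off the monotonicity and the limit from Lemma~\ref{lem3}. The extra uniqueness-based sanity check is a nice but inessential embellishment.
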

\begin{proof} Using an exhaustion argument it is easily verified that
\begin{equation}\label{eqv1}
v(x,t)=\mathbf{1}-\int_{B^*}
k_P^{B^*}(x,y,t)\dy=\mathbf{1}-w(x,t),
\end{equation}
where $w$ is the heat content of $B^*$. Therefore, the corollary
follows directly from Lemma \ref{lem3}.\end{proof}
\mysection{Varadhan's lemma}\label{secVa}
Varadhan's celebrated lemma (see,
\cite[Lemma~9, p.~259]{Va} or \cite[pp.~192--193]{Pinsky}) deals with the large time behavior of minimal solutions of the Cauchy problem with {\em bounded} initial data (assuming that $P\mathbf{1}=0$). It turns out that the limit as $t\to\infty$ of such solutions might not exist, but, under further conditions, the spacial oscillation of the solution  tends to zero as $t\to\infty$. In the present section we slightly extend Varadhan's lemma (Lemma~\ref{lemVa}). This extended version of the lemma is crucially used in the proof of the null-critical case in Theorem~\ref{thm1}.

Varadhan proved his lemma for {\em positive-critical} operators on
$\mathbb{R}^d$ using a purely probabilistic approach (ours is purely analytic). Our key
observation is that the assertion of Varadhan's lemma is valid in our general setting
under the weaker assumption that the skew product operator
$\bar{P}:= P\otimes I+I\otimes P$ is critical in $\bar{M}:=M\times
M$, where $I$ is the identity operator on $M$. Note that if $\bar{P}$ is critical in $\bar{M}$, then
$P$ is critical in $M$. On the other hand, if $P$ is positive-critical in
$M$, then $\bar{P}$ is positive-critical in $\bar{M}$. Moreover, if
$\bar{P}$ is subcritical in $\bar{M}$, then by part {\em (i)} of Theorem \ref{thm1},
the heat kernel of $\bar{P}$ on $\bar{M}$ tends to zero as
$t\to\infty$. Since the heat kernel of $\bar{P}$ is equal to the
product of the heat kernels of its factors (see Lemma~\ref{lem_hkp}), it follows that if
$\bar{P}$ is subcritical in $\bar{M}$, then $\lim_{t\to\infty}
k_P^{M}(x,y,t)=0$.

Consider the Riemannian product manifold $\bar{M}=M\times M$. A point in
$\bar{M}$ is denoted by $\bar{x}=(x_1,x_2)$. By $P_{x_i}$,
$i=1,2$, we denote the operator $P$ in the variable $x_i$. So,
$\bar{P}=P_{x_1}+P_{x_2}$ is in fact the above skew product operator defined on
$\bar{M}$. We denote by $\bar{L}$ the corresponding parabolic
operator.
 \begin{lemma}[Varadhan's lemma \cite{P03}]\label{lemVa} Assume that $P\mathbf{1}=0$.
Suppose further that $\bar{P}$ is critical on $\bar{M}$. Let $f$
be a continuous bounded function on $M$, and  let
 $$u(x,t)=\int_{M} k_P^{M}(x,y,t)f(y)\dy$$
 be the minimal solution of the Cauchy problem with initial data $f$ on
$M$.  Fix $K\subset \subset M$. Then
 $$\lim_{t\to\infty}\sup_{x_1,x_2\in K}|u(x_1,t)-u(x_2,t)|=0.$$
\end{lemma}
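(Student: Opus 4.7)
The plan is a contradiction argument: I extract an ancient bounded solution $V$ of $\bar L V = 0$ on $\bar M \times \mathbb{R}$ that vanishes on the diagonal, and then use the criticality of $\bar P$ to force $V \equiv 0$. First I set the ``lifted'' function $U(\bar x, t) := u(x_1, t) - u(x_2, t)$ with $\bar x = (x_1, x_2) \in \bar M$. Since $\bar P$ critical on $\bar M$ implies $P$ critical on $M$ (as noted in the text just before the lemma), and $P\mathbf{1} = 0$, the constant $\mathbf{1}$ is the ground state of $P$ and hence $P$-invariant, so $\int_M k_P^M(x, y, t)\,\mathrm{d}y = 1$. Combining this with the product formula $k_{\bar P}^{\bar M}(\bar x, \bar y, t) = k_P^M(x_1, y_1, t)\,k_P^M(x_2, y_2, t)$ from Lemma~\ref{lem_hkp}(6), one checks that
\[
U(\bar x, t) = \int_{\bar M} k_{\bar P}^{\bar M}(\bar x, \bar y, t) F(\bar y)\,\mathrm{d}\bar y, \qquad F(\bar y) := f(y_1) - f(y_2),
\]
so $U$ is a bounded solution of $\bar L U = 0$ on $\bar M \times (0, \infty)$ which vanishes identically on $D \times (0, \infty)$, where $D := \{(y,y) : y \in M\}$ is the diagonal.

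Suppose the conclusion fails: there exist $\varepsilon_0 > 0$, $t_n \to \infty$, and $\bar x_n \in K \times K$ with $|U(\bar x_n, t_n)| \ge \varepsilon_0$; by compactness we may assume $\bar x_n \to \bar x_* \in K \times K$. Consider the time-translates $V_n(\bar x, s) := U(\bar x, t_n + s)$ for $s > -t_n$; these form a uniformly bounded family of $\bar L$-solutions. By the parabolic Harnack inequality of Lemma~\ref{lem_Harnack} together with standard Schauder regularity, $\{V_n\}$ is equicontinuous on compact subsets of $\bar M \times \mathbb{R}$, so by Arzel\`a--Ascoli a subsequence converges locally uniformly to an entire bounded solution $V$ of $\bar L V = 0$ on $\bar M \times \mathbb{R}$, with $V \equiv 0$ on $D \times \mathbb{R}$ and $|V(\bar x_*, 0)| \ge \varepsilon_0$. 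The goal is then to show $V \equiv 0$.

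Because $\bar P \mathbf{1} = 0$ and $\mathbf{1}$ is the invariant ground state of $\bar P$, bounded solutions of the Cauchy problem on $\bar M$ are unique, and therefore $V$ satisfies the reproducing identity $V(\bar x, s) = \int_{\bar M} k_{\bar P}^{\bar M}(\bar x, \bar y, t) V(\bar y, s-t)\,\mathrm{d}\bar y$ for all $s \in \mathbb{R}$ and $t \ge 0$. Consequently $s \mapsto \sup_{\bar M} V(\cdot, s)$ is non-increasing; set $L := \lim_{s \to +\infty} \sup_{\bar M} V(\cdot, s) \ge 0$. If this supremum is attained in a compact set by some sequence $\bar y_n \to \bar y_*$, $s_n \to \infty$, then the strong parabolic maximum principle applied to $L\mathbf{1} - V \ge 0$ (which is a nonnegative $\bar L$-solution by $\bar P \mathbf{1} = 0$) forces $V \equiv L$ on the parabolic past of $(\bar y_*, s^*)$ for some $s^*$; since the past meets the diagonal, where $V = 0$, this gives $L = 0$ and hence $V(\bar x_*, 0) = 0$, the desired contradiction. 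In the residual case, where the sup-seeking sequence escapes to infinity (possible in the null-critical regime $\varphi^* \notin L^1(M)$), I invoke Lemma~\ref{lem3} applied to $\bar P$ on $\bar M$ (legitimate since $\bar P\mathbf{1}=0$ and $\bar P$ is critical on $\bar M$): the heat content of $\bar B^*$ tends to zero locally uniformly, and combined with the antisymmetry $V(\sigma \bar x, s) = -V(\bar x, s)$ under the coordinate swap $\sigma$ together with the diagonal-vanishing of $V$, the reproducing identity yields, upon letting $t \to \infty$, that $V(\bar x_*, 0) = 0$.

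The principal obstacle is exactly the null-critical subcase, where $k_P^M \to 0$ pointwise and the heat kernel mass on $\bar M$ escapes to infinity rather than concentrating near a stationary density, so the ``convergence to $\varphi^*$'' argument of Theorem~\ref{thm1}(ii) is unavailable. The heat content / capacitory potential apparatus of Section~\ref{secaux}, together with the diagonal-vanishing and antisymmetry of the lifted function $V$ on $\bar M$, is precisely what replaces the probabilistic coupling argument in the analytic proof and makes essential use of the hypothesis that $\bar P$ (and not merely $P$) is critical on $\bar M$.
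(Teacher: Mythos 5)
The lift to $\bar M$, the observation that the lifted function vanishes on the diagonal, the antisymmetry under the coordinate swap, and the invocation of the heat-content lemma are all the right ingredients, but the argument as assembled has two genuine gaps. In the maximum-principle branch, the quantity $L:=\lim_{s\to+\infty}\sup_{\bar M}V(\cdot,s)$ is the \emph{infimum} of the nonincreasing map $s\mapsto\sup_{\bar M}V(\cdot,s)$, so establishing $L=0$ gives no control over $\sup_{\bar M}V(\cdot,0)$ and does not contradict $|V(\bar x_*,0)|\ge\varepsilon_0$; what you would actually need is the limit as $s\to-\infty$, and even then applying the strong maximum principle at a sup-attaining point requires first passing to a further time-translation limit, which you do not carry out. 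The residual case is not an argument at all: the reproducing identity integrates $V$ against $k_{\bar P}^{\bar M}(\bar x_*,\cdot,t)$, which is not antisymmetric in $\bar y$ when $\bar x_*$ is off the diagonal, so the swap symmetry produces no cancellation; and since $\bar P\mathbf 1=0$ with $\bar P$ critical, the total mass $\int_{\bar M}k_{\bar P}^{\bar M}(\bar x_*,\bar y,t)\,\mathrm{d}\bar y$ stays $1$ for all $t$, so boundedness of $V$ does not force the integral to vanish. Lemma~\ref{lem3} concerns the Dirichlet kernel $k_{\bar P}^{B^*}$ on the exterior of a ball, and you never introduce the decomposition that would link that kernel to the identity in which you want to let $t\to\infty$. (The appeal to ``uniqueness of bounded solutions of the Cauchy problem'' is also not something the paper supplies; what holds, with work, is the reproducing identity for ancient limits of translates of the \emph{minimal} solution.)

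The missing idea is precisely the paper's geometric trick. Fix $\varepsilon>0$, pick $x_0\in M\setminus K$, and set $B:=B((x_0,x_0),\delta)\Subset\bar M\setminus\bar K$, a ball centered \emph{on the diagonal}, with $\delta$ so small that uniform interior Schauder estimates give $|\bar u(\bar x,t)|=|u(x_1,t)-u(x_2,t)|<\varepsilon$ on $\partial B\times(1,\infty)$ (this is where one uses that every $\bar x=(x_1,x_2)\in\partial B$ satisfies $\mathrm{dist}_M(x_1,x_2)\le 2\delta$). Then in $B^*\times(1,\infty)$ write $\bar u=u_1+u_2$, where $u_1$ has zero initial data and boundary data $\bar u|_{\partial B}$, and $u_2$ has zero boundary data and initial data bounded by $2\|f\|_\infty$. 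Comparison with the parabolic capacitory potential of $B^*$ gives $|u_1|\le\varepsilon$, and $|u_2|\le 2\|f\|_\infty\int_{B^*}\bar k_{\bar P}^{B^*}(\cdot,\bar y,t-1)\,\mathrm{d}\bar y\to 0$ locally uniformly by Lemma~\ref{lem3}. This is where the diagonal-vanishing, the heat-content decay, and the criticality of $\bar P$ are combined. Your contradiction/ancient-solution framing is not wrong in principle --- one could run the same decomposition on $V$ with initial time $s_0\to-\infty$ --- but the small-ball near-diagonal estimate and the $u_1+u_2$ split are exactly the steps that make the argument close, and they are absent from your write-up.
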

\begin{proof} Denote by $\bar{u}(\bar{x},t):=u(x_1,t)-u(x_2,t)$. Recall that
the heat kernel $\bar{k}(\bar{x},\bar{y},t)$ of the operator
$\bar{L}$ on $\bar{M}$ satisfies
\begin{equation}\label{eqkkk}
\bar{k}_{\bar{P}}^{\bar{M}}(\bar{x},\bar{y},t)=k_P^M(x_1,y_1,t)k_P^M(x_2,y_2,t).
\end{equation}
By (\ref{eqinvar}) and (\ref{eqkkk}), we have
 \bean
\bar{u}(\bar{x},t)=u(x_1,t)-u(x_2,t)=\\
\int_{M}k_P^M(x_1,y_1,t)f(y_1)\dy_1-\int_{M}k_P^M(x_2,y_2,t)f(y_2)\dy_2=\\
\int_{M}\int_{M}k_P^M(x_1,y_1,t)k_P^M(x_2,y_2,t)(f(y_1)-f(y_2))\dy_1\dy_2=\\
\int_{\bar{M}}\bar{k}_{\bar{P}}^{\bar{M}}
(\bar{x},\bar{y},t)(f(y_1)-f(y_2))\,\mathrm{d}\bar{y}.
 \eean
Hence, $\bar{u}$ is the minimal solution of the Cauchy problem for
the equation $\bar{L}\bar{w}=0$ on $\bar{M}\times (0,\infty)$ with the bounded initial data $\bar{f}(\bar{x}):=f(x_1)-f(x_2)$, where $\bar{x}\in \bar{M}$.

Fix a compact set $K\subset\subset M$ and $x_0\in M\setminus K$,
and let $\varepsilon>0$. Let $B:=B((x_0,x_0),\delta)\subset\subset
\bar{M}\setminus \bar{K}$, where $\bar{K}=K\times K$, and $\delta$
will be determined below. We may assume that $B^*=\bar{M}\setminus
\mbox{cl}(B)$ is connected. Then $\bar{u}$ is a minimal solution
of the following initial-boundary value problem
 \bea\label{ibvpf}
 \bar{L}\bar{u}&=&0 \qquad \qquad\qquad\qquad \mbox{ in }  B^*\times (0,\infty),\nonumber\\
  \bar{u}(\bar{x},t)&=&u(x_1,t)-u(x_2,t) \quad
  \mbox{ on } \partial B\times (0,\infty),\\
  \bar{u}(\bar{x},0)&=&f(x_1)-f(x_2)  \qquad\quad  \mbox{ on  }  B^*\times \{0\}.\nonumber
  \eea
We need to prove that $\lim_{t\to\infty}\bar{u}(\bar{x},t)=0$.

By the superposition principle (which obviously holds for minimal
solutions), we have
 $$\bar{u}(\bar{x},t)=u_1(\bar{x},t)+u_2(\bar{x},t)\quad \mbox{ on } B^*\times
[1,\infty),$$ where $u_1$ solves the initial-boundary value
problem
 \bea\label{ibvpu1}
 \bar{L}u_1&=&0 \qquad\qquad\qquad\qquad \mbox{ in } B^*\times (1,\infty),\nonumber\\
  u_1(\bar{x},t)&=&u(x_1,t)-u(x_2,t) \quad \mbox{ on  } \partial B\times (1,\infty),\\
  u_1(\bar{x},0)&=&0  \qquad\qquad\qquad\qquad \mbox{ on  }  B^*\times \{1\},\nonumber
 \eea
 and $u_2$ solves the  initial-boundary value problem
 \bea\label{ibvpu2}
 \bar{L}u_2&=&0 \qquad\qquad\qquad\qquad \mbox{ in } B^*\times (1,\infty),\nonumber\\
  u_2(\bar{x},t)&=&0 \qquad\qquad\qquad\qquad \mbox{ on  } \partial B\times (1,\infty),\\
  u_2(\bar{x},0)&=& u(x_1,1)-u(x_2,1)  \quad \mbox{ on  }  B^*\times \{1\}.\nonumber
  \eea
Clearly, $|\bar{u}(\bar{x},t)|\leq 2\|f\|_\infty$ on
$\bar{M}\times (0,\infty)$. Note that if $\bar{x}=(x_1,x_2)\in
\partial B$, then on $M$, $\mbox{dist}_M(x_1,x_2) \leq 2\delta$. Using Schauder's
parabolic interior estimates on $M$, it follows that if $\delta$
is small enough, then
$$|\bar{u}(\bar{x},t)|=|u(x_1,t)-u(x_2,t)|<\varepsilon \quad
\mbox{ on  }
\partial B\times (1,\infty).$$
By comparison of $u_1$ with the parabolic capacitory potential of
$B^*$, we obtain that
\begin{equation}\label{estu1}
|u_1(\bar{x},t)|\leq \varepsilon\left(1-\int_{B^*}
\bar{k}_{\bar{P}}^{B^*}(\bar{x},\bar{y},t-1)\,
\mathrm{d}\bar{y}\right)<\varepsilon \qquad \mbox{ in } B^*\times
(1,\infty).
  \end{equation}
On the other hand,
\begin{equation}\label{estu2}
|u_2(\bar{x},t)|\leq 2\|f\|_\infty\int_{B^*}
\bar{k}_{\bar{P}}^{B^*}(\bar{x},\bar{y},t-1)\,\mathrm{d}\bar{y} \qquad
\mbox{ in } B^*\times (1,\infty).
  \end{equation}
It follows from (\ref{estu2}) and Lemma \ref{lem3} that there
exists $T>0$ such that
\begin{equation}\label{estu21}
|u_2(\bar{x},t)|\leq \varepsilon \quad \mbox { for all }
\bar{x}\in \bar{K} \mbox{ and } t>T.
\end{equation}
Combining (\ref{estu1}) and (\ref{estu21}), we obtain that
$|u(x_1,t)-u(x_2,t)|\leq 2\varepsilon$ for all $x_1,x_2\in K $ and
$t>T$. Since $\varepsilon$ is arbitrary, the lemma is proved. \end{proof}

\mysection{Existence of $\lim_{t\to\infty} e^{\lambda_0 t}k_P^{M}(x,y,t)$}\label{secmainthm}
The present section is devoted to the proof of Theorem~\ref{thm1} which claims that
$\lim_{t\to\infty} e^{\lambda_0 t}k_P^{M}(x,y,t)$ always
exists. Without loss of generality, we may assume that $\gl_0=0$. For convenience, we denote by $k$ the heat kernel
$k_P^{M}$ of the operator $P$ in $M$.
\begin{proof}[Proof of Theorem~\ref{thm1}]

(i) {\em The subcritical case}: Suppose that $P$ is subcritical in $\Gw$. It means that for any $x,y,\in M$ we have
\begin{equation}\label{JFA92_3.22}
 \int_1^\infty k(x, y, t) \dt < \infty.
\end{equation}
Let $x, y\in\Gw$ be fixed, and suppose that $k(x, y, t)$ does not converge to zero as
$t \to \infty$. Then there exist an increasing sequence $t_j\to\infty$, $t_{j+ 1} - t_j > 1$, and
$\vge > 0$, such that $k(x, y, t_j) > \vge$ for all $j > 1$. Using the parabolic Harnack
inequality \eqref{eq_harnack}, we deduce that there exists $C > 0$ ($C$ may depend on $x, y,\Gw$,
and $P$) such that
\begin{equation}\label{JFE92_3.23}
\int_{t_j}^{t_{j+1}}k(x, y, t) \dt > C\vge\qquad  \forall j>2.
\end{equation}
But this contradicts \eqref{JFA92_3.22}.

\vskip 3mm

(ii) {\em The positive-critical case}: Let $P$ be a critical operator in $M$ and let $\{t_j\}_{j=1}^\infty\subset \R$ be a sequence such that $t_j\to\infty$, and define:
$$u_j(x, y, t)=k(x, y, t+t_j).$$
Fix $x_0,y_0\in M$. By \eqref{eqHar1} we have
\begin{equation}\label{eqHar2}
u_j(x,y_0,t)\leq c_1(y_0)\vgf(x), \qquad \Big( u_j(x_0,y,t)\leq
c_2(x_0)\vgf^*(y), \Big)
\end{equation}
for all $x\in M$ ($y\in M$) and $t\in \R$ and $j>J(t)$. Using the parabolic Harnack inequality and a standard Parabolic regularity argument we may subtract a subsequence of $\{u_j\}$ (which we rename by $\{u_j\}$) that converges locally uniformly to a nonnegative solution $u(x, y, t)$ of the parabolic equations
$$Lu=\pd_t u+P(x,\pd_x)u=0, \quad Lu=\pd_t u+P(y,\pd_y)u=0 \quad   \mbox{ in } M\times \R.$$
Moreover, $u$ satisfies the estimates
\begin{align}\label{eqHar3}
u(x,y_0,t)&\leq c_1(y_0)\vgf(x) \qquad \forall(x,t)\in M\times \R, \\
u(x_0,y,t)&\leq
c_2(x_0)\vgf^*(y) \qquad \forall(y,t)\in M\times \R. \label{eqHar3a}
\end{align}

 Using the semigroup property
we have for all $t, \gt > 0$
\begin{align}\label{JFA92_3.35}
\int_M k(x,z,\gt)k(z,y,t + t_j) \dz= \int_M k (x,z ,t + t_j) k(z,y, \gt)\dz\\
=k(x, y, \gt + t + t_j)
    \end{align}
for all $j\geq 1$. On the other hand, by \eqref{eqinvar}, the ground state $\vgf$ is a positive invariant solution. Consequently, for any $x\in M$  and $\gt >0$, $k(x,\cdot,\gt)\vgf\in L^1( M)$.
Similarly, for all $\gt > 0$ and $y\in M$, $\vgf^* k(\cdot, y, \gt)\in L^1( M)$. Hence, by estimates \eqref{eqHar2}
and the Lebesgue dominated convergence theorem, we obtain
\begin{equation}\label{JFA92_3.36}
\int_ M k(x, z, \gt) u(z, y, t) \dz =
\int_ M u(x, z, t) k(z,y, \gt) \dz = u(x, y, \gt + t),
\end{equation}
where $u(x, y, t) = \lim_{j\to\infty} k(x, y,t + t_j)$. In particular, for $\gt = t_j$ and $t = - t_j$ we have
\begin{equation}\label{JFA92_3.37}
\int_ M k(x, z, t_j) u(z, y,-t_j) \dz
=\int_ M u(x, z,-t_j) k(z,y, t_j) \dz = u(x, y, 0)
\end{equation}
Invoking again estimates \eqref{eqHar1}, and \eqref{eqHar3}--\eqref{eqHar3a}, we see that the integrands in \eqref{JFA92_3.37} are bounded by
$C(x,y)\vgf(z)\vgf^*(z)$. Recall our assumption that $\vgf(x)\vgf^*(x)\in L^1( M)$, hence, the Lebesgue dominated convergence theorem implies
\begin{equation}\label{JFA92_3.38}
\int_ M u(x, z, 0) \hat{u}(z, y,0) \dz =
\int_ M \hat{u}(x, z,0) u(z,y,0) \dz = u(x, y,0),
    \end{equation}
where
\begin{equation}\label{JFA92_3.39}
\hat{u}(x, y,t):= \lim_{j\to\infty}u(x, y,t-t_j),
\end{equation}
and again we may assume that the limit in (\ref{JFA92_3.39}) exists.

On the other hand, by the invariance property  \eqref{eqinvar}, and  estimates \eqref{eqHar1}, we get
\begin{equation}\label{JFA92_3.40}
\int_ M u(x, z, \gt) \vgf(z) \dz = \vgf(x), \qquad
\int_ M \vgf^*(z) u(z,y, t) \dz = \vgf^*(y).
\end{equation}
In particular $u(x, y, t) > 0$. It follows from the Harnack inequality and
(\ref{JFA92_3.38}) that $\hat{u}(x, y, t)$ is also positive and we have
\begin{equation}\label{JFA92_3.41}
\int_M\int_ M \hat{u}(x, z, 0) u(z, y,0) \dz\vgf(y) \dy =
\int_ M u(x, y,0)\vgf(y) \dy .
\end{equation}
Consequently, (\ref{JFA92_3.40}) and (\ref{JFA92_3.41}) imply that
\begin{equation}\label{JFA92_3.42}
    \int_M \hat{u}(x, z, 0) \vgf(z) \dz=\vgf(x).
\end{equation}
Define integral operators
\begin{equation}\label{JFA92_3.43}
    Uf(x):=\int_M \hat{u}(x, z, 0)f(z) \dz, \qquad U^*f(y):=\int_ M \hat{u}(z, y, 0)f(z) \dz.
\end{equation}
By (\ref{JFA92_3.38}) and (\ref{JFA92_3.42}) we see that $\vgf(x)$ and $u(x, y, 0)$ (as a function of $x$) are
positive eigenfunctions of the operator $U$ with an eigenvalue $1$, and for
every $x \in M$, $u(x, y, 0)$ is a positive eigenfunction of $U^*$ with an eigenvalue
$1$. Moreover, for every $x\in  M$, $u(x, \cdot, 0)\vgf\in L^1( M)$ and for every $x,
z\in M$, $u(x, \cdot, 0) u(\cdot, z, 0) \in L^1( M)$. Consequently, it follows \cite[Lemma~3.4]{Pheat} that $1$ is a simple eigenvalue of the integral operator $U$. Hence,
\begin{equation}\label{JFA92_3.44}
u(x, y,0) = \gb(y)\vgf(x).
\end{equation}
By  (\ref{JFA92_3.40}) and (\ref{JFA92_3.44}) we have
\begin{equation}\label{JFA92_3.45}
\int_ M \vgf^* (z)\gb(y)\vgf(z)\dz =\int_ M \vgf^* (z)u(z, y,0)\dz
=\vgf^* (y) .
\end{equation}
Therefore, we obtain from \eqref{JFA92_3.44} and \eqref{JFA92_3.45} that $\gb(y)= \frac{\vgf^*(y)}{\int_ M \vgf(z)\vgf^*(z)\dz}$.

Thus,
\begin{equation}\label{JFA92_3.46}
\lim_{j\to\infty} k(x, y, t_j) = u(x, y,0)=\frac{\vgf(x)\vgf^*(y)}{\int_ M \vgf(z)\vgf^*(z)\dz}\, ,
\end{equation}
and the positive-critical case is proved since the limit in \eqref{JFA92_3.46} is independent of the sequence $\{t_j\}$.

\vskip 3mm

(iii) {\em The null-critical case}:
Without loss of generality, we may assume that $P\mathbf{1}=0$,
where $P$ is a null-critical operator in $M$. We need to prove
that
 $$\lim_{t\to\infty} k_P^M(x,y,t)=0.$$

As in Lemma~\ref{lemVa} (Varadhan's lemma), consider the Riemannian product manifold $\bar{M}:=M\times
M$, and let $\bar{P}=P_{x_1}+P_{x_2}$ be the corresponding skew
product operator which is defined on $\bar{M}$.

If $\bar{P}$ is subcritical on $\bar{M}$, then by part {\em (i)},  $\lim_{t\to \infty}
\bar{k}_{\bar{P}}^{\bar{M}}(\bar{x},\bar{y},t)=0$, and since
$$\bar{k}_{\bar{P}}^{\bar{M}}(\bar{x},\bar{y},t)=
k_{P}^M(x_1,y_1,t)k_{P}^M(x_2,y_2,t),$$ it follows that
$\lim_{t\to \infty}k_P^M(x,y,t)=0$.

Therefore, there remains to prove the theorem for the case where
$\bar{P}$ is critical in $\bar{M}$. Fix a nonnegative, bounded,
continuous function $f\neq 0$ such that $\varphi^*f\in L^1(M)$,
and consider the solution
 $$v(x,t):=\int_M k_P^M(x,y,t)f(y)\dy.$$
Let $t_n\to \infty$, and consider the sequence $\{v_n(x,t):= v(x,t+t_n)\}$. As in part {\em (ii)}, up to a subsequence,  $\{v_n\}$ converges to a nonnegative solution $u\in \mathcal{H}_P(M\times
\mathbb{R})$.

Invoking Lemma \ref{lemVa} (Varadhan's lemma), we see that
$u(x,t)=\alpha(t)$. Since $u$ solves the parabolic equation
$Lu=0$, it follows that $\alpha(t)$ is a nonnegative constant
$\alpha$.

We claim that $\alpha=0$. Suppose to the contrary that $\alpha>0$.
The assumption that $\varphi^*f\in L^1(M)$ and (\ref{eqinvar})
imply that for any $t>0$
 \bea\label{eqphiv} \int_M
\varphi^*(y)v(y,t)\dy=\int_M \varphi^*(y)\left(\int_M
k_P^M(y,z,t)f(z)\dz\right)\dy=\nonumber\\ \int_M \left(\int_M
\varphi^*(y) k_P^M(y,z,t)\dy\right)f(z)\dz=\int_M \varphi^*(z)
f(z)\dz<\infty . \eea
 On the other hand, by the null-criticality, Fatou's lemma, and
(\ref{eqphiv}) we have
  \bean\label{eqcontra}
  \infty=\int_M \varphi^*(z) \alpha\dz=
   \int_M \varphi^*(z) \lim_{n\to\infty}v(z,t_n)\dz
\leq\\
 \liminf_{n\to\infty}\int_M \varphi^*(z) v(z,t_n)\dz=\int_M
\varphi^*(z) f(z)\dz<\infty. \eean Hence $\alpha=0$, and
therefore
\begin{equation}\label{eqlim0}
\lim_{t\to\infty}\int_M
k_P^M(x,y,t)f(y)\dy=\lim_{t\to\infty}v(x,t)=0.
\end{equation}

Now fix $y\in M$ and let $f:=k_P^M(\cdot,y,1)$. Consider the minimal solution of the Cauchy problem with
initial data $f$. So, by the semigroup property we have
\begin{multline*}u(x,t):=\int_M
k_P^M(x,z,t)f(z)\dz=\\ \int_M
k_P^M(x,z,t)k_P^M(z,y,1)\dz =k_P^M(x,y,t+1).
\end{multline*}
In view of  \eqref{eqHar1} (with $v=\mathbf{1}$) and \eqref{eqinvar}, the function $f$ is bounded and satisfies $f\vgf^*\in L^1(M)$. Therefore, by \eqref{eqlim0}
$\lim_{t\to\infty}u(x,t)=0$. Thus,
 $$\lim_{t\to\infty}k_P^M(x,y,t)=\lim_{t\to\infty}k_P^M(x,y,t+1)= \lim_{t\to\infty}u(x,t)=0.$$

The last statement of the theorem concerning the behavior of the Green function $G_{P-\gl}^M(x,y)$ as $\gl\to\gl_0$ follows from the first part of the theorem using a classical Abelian theorem \cite[Theorem~10.2]{Simon}.
 \end{proof}
\mysection{Applications of Theorem~\ref{thm1}}\label{sec_appl}
We discuss in this section some applications of Theorem~\ref{thm1} and comment on related results. First we prove \eqref{logform},  which characterizes  $\lambda_0$ in terms of the large time behavior of $\log k_P^M(x,y,t)$.
\begin{Cor}[\cite{fkp}]\label{asser_log}
The heat kernel satisfies
  \begin{equation}\label{eq_logform1}
\lim_{t \to \infty} \frac{\log k_P^M(x,y,t)}{t} =-\lambda_0(P,M) \qquad x,y\in M.
\end{equation}
\end{Cor}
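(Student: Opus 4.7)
The plan is to first reduce to the case $\gl_0 := \gl_0(P, M) = 0$. By \eqref{eq_shift} one has $\log k_P^M(x,y,t)/t = \log k_{P - \gl_0}^M(x,y,t)/t - \gl_0$ and $\gl_0(P - \gl_0, M) = 0$, so it suffices to show $\log k_Q^M(x, y, t)/t \to 0$ for every $Q$ with $\gl_0(Q, M) = 0$. Below I continue to call such $Q$ simply $P$.

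The upper bound $\limsup_{t \to \infty} \log k_P^M(x,y,t)/t \leq 0$ is immediate from Theorem~\ref{thm1}: with $\gl_0 = 0$ the theorem gives $\lim_{t \to \infty} k_P^M(x,y,t) \in [0, \infty)$, so $k_P^M(x,y,\cdot)$ is bounded on some tail of $[0, \infty)$, and $\log k_P^M(x,y,t)/t$ is eventually dominated by $O(1/t)$.

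For the lower bound I would fix $\vge > 0$ and consider $P_\vge := P - \vge$, whose generalized principal eigenvalue is $\gl_0(P_\vge, M) = -\vge < 0$. Hence $P_\vge$ admits no positive supersolution on $M$ and in particular is not subcritical there. If the integral
\[
\int_0^\infty e^{\vge \tau}\, k_P^M(x,y,\tau)\, d\tau \;=\; \int_0^\infty k_{P_\vge}^M(x,y,\tau)\, d\tau
\]
were finite, then Harnack together with integrability would force $k_{P_\vge}^M(x,y,t) \to 0$ as $t \to \infty$, and a standard integration by parts in $t$ would show that this integral defines a positive minimal Green function of $P_\vge$ on $M$, contradicting the non-existence of positive supersolutions. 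Hence the integral is infinite. A pointwise upper bound $k_P^M(x,y,t) \leq e^{-(\vge + \gd)t}$ on a tail of $[0,\infty)$ would reduce it to $\int e^{-\gd \tau}\, d\tau < \infty$, which is impossible. I would then invoke the parabolic Harnack inequality (Lemma~\ref{lem_Harnack}), which yields $k_P^M(x,y,t+1) \geq c\, k_P^M(x,y,t)$ for $t \geq 2$, to propagate the "good times" at which $k_P^M(x,y,\cdot)$ fails to be exponentially small forward in time along Harnack windows, and thereby obtain $k_P^M(x,y,t) \geq e^{-\vge' t}$ on a tail for every $\vge' > \vge$. Letting $\vge \searrow 0$ gives $\liminf_{t \to \infty} \log k_P^M(x,y,t)/t \geq 0$ and closes the proof.

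The main obstacle is this last step. The integral divergence combined with Harnack yields directly only a $\limsup$-type conclusion (arbitrarily large "good times" exist), and parabolic Harnack controls $k_P^M$ only in the forward time direction; upgrading this to a genuine pointwise $\liminf$ statement requires showing that the good times are dense enough for Harnack forward propagation to cover an entire tail of $[0,\infty)$. This is the delicate technical point, and it is where the fine quantitative form of Harnack (Lemma~\ref{lem_Harnack}) together with the precise control afforded by Theorem~\ref{thm1} and its Abelian--Tauberian relation \eqref{eqgreen} have to be combined carefully.
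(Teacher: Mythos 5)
Your upper bound is fine and is essentially the paper's argument: since $e^{\lambda_0 t}k_P^M(x,y,t)$ converges (in $[0,\infty)$) by Theorem~\ref{thm1}, it is eventually bounded, which gives $\limsup_{t\to\infty}t^{-1}\log k_P^M(x,y,t)\leq -\lambda_0$ immediately. For the lower bound, however, you take a genuinely different route from the paper, and the gap you yourself flag at the end is a real one, not a routine detail. The divergence of $\int_0^\infty e^{\vge\tau}k_P^M(x,y,\tau)\,d\tau$ only tells you that $e^{\vge\tau}k_P^M(x,y,\tau)$ is not integrable; by itself this produces arbitrarily large ``good times'' $t_n$ at which $k_P^M(x,y,t_n)$ is not exponentially small at rate $\vge+\gd$, but it gives no control on the gaps $t_{n+1}-t_n$. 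The parabolic Harnack inequality of Lemma~\ref{lem_Harnack} only propagates forward in time and with a fixed multiplicative constant per unit time; iterating it from a single good time $t_n$ yields a lower bound of the form $k_P^M(x,y,t)\gtrsim k_P^M(x,y,t_n)\,e^{-\gamma(t-t_n)}$ with $\gamma$ a Harnack constant that has nothing to do with $\vge$, so between widely spaced good times you cannot prevent decay at a rate faster than $\vge$. Nothing in the setup rules out long intervals of fast decay followed by slower recovery, so the ``good times are dense'' claim is precisely what is missing, and I do not see how to obtain it from the ingredients you list.

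The paper closes the lower bound by an exhaustion argument that avoids this difficulty entirely: take smooth $M_j\Subset M$ increasing to $M$. On each $M_j$ the operator $P-\lambda_0(P,M_j)$ is positive-critical (the ground state is an $L^2$ Dirichlet eigenfunction on a bounded domain), so part (ii) of Theorem~\ref{thm1} gives $e^{\lambda_0(P,M_j)t}k_P^{M_j}(x,y,t)\to c_j(x,y)>0$, hence $t^{-1}\log k_P^{M_j}(x,y,t)\to -\lambda_0(P,M_j)$. Domain monotonicity $k_P^{M_j}\leq k_P^M$ then yields $\liminf_{t\to\infty}t^{-1}\log k_P^M(x,y,t)\geq -\lambda_0(P,M_j)$ for each $j$, and since $\lambda_0(P,M_j)\searrow\lambda_0(P,M)$ the lower bound follows. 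This is cleaner because on bounded domains the sharp asymptotics of the heat kernel are available directly from the positive-critical case, so there is no need to squeeze a pointwise lower bound out of an integral divergence statement. I would suggest replacing your lower-bound argument with the exhaustion argument.
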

\begin{proof}
The needed upper bound for the validity of \eqref{eq_logform1}
follows directly from Theorem~\ref{thm1} and \eqref{eq_hkp-gl}.

The lower bound is obtained by a standard exhaustion argument and Theorem~\ref{thm1} (cf.~the proof of  \cite[Theorem~10.24]{Grigoryan}). Indeed, let $\{M_{j}\}_{j=1}^{\infty}$ be an {\em exhaustion} of  $M$. Recall that since $M_j$ is a smooth bounded domain, the operator $P-\gl_0(P,M_j)$ is positive-critical in $M_j$.
Therefore, Theorem~\ref{thm1}, and the monotonicity of heat kernels with respect to domains (see part 4 of Lemma~\ref{lem_hkp}) imply that
$$\liminf_{t \to \infty} \frac{\log k_P^M(x,y,t)}{t}\geq \lim_{t \to \infty} \frac{\log k_P^{M_j}(x,y,t)}{t}=-\lambda_0(P,M_j).$$
Since $\lim_{j\to\infty}\gl_0(P,M_j)= \gl_0(P,M)$, we obtain the needed lower bound.
\end{proof}

\vskip 3mm

 We now use Theorem~\ref{thm1} to strengthen Lemma
\ref{lemVa}. More precisely, in the following result we obtain the large time behavior of solutions of the Cauchy problem with initial conditions which satisfy a certain (and in some sense optimal) integrability condition.
 \begin{corollary}[\cite{P03}]\label{Cormainthm} Let $P$ be an elliptic operator of
the form (\ref{P}) such that $\lambda_0\geq 0$. Let $f$ be a
continuous function on $M$ such that $v^*f\in L^1(M)$ for some
$v^*\in \mathcal{C}_{P^*}(M)$. Let
 $$u(x,t)=\int_{M} k_P^{M}(x,y,t)f(y)\dy$$
 be the minimal solution of the Cauchy problem with initial data $f$ on
$M$. Fix $K\subset\subset M$.   Then $$\lim_{t\to\infty}\sup_{x\in
K}|u(x,t)-\mathcal{F}(x)|=0,$$ where $$\mathcal{F}(x):=
  \begin{cases}
    \dfrac{\int_M f(y)\varphi^*(y)\dy}{\int_M
\varphi(y)\varphi^*(y)\dy}\,\varphi(x) & \;\text{{\em if $P$ is
positive-critical in $M$}},
\\[4mm]
    0 & \;\text{{\em otherwise}}.
  \end{cases}
$$
\end{corollary}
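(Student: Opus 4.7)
My plan is to combine the pointwise large-time behavior of $k_P^M(x,y,t)$ given by Theorem~\ref{thm1} with a uniform-in-$x$ integrable domination coming from~\eqref{eqHar1}, and then upgrade pointwise convergence to uniform convergence on $K$ via interior parabolic regularity. The argument handles the three cases of Theorem~\ref{thm1} simultaneously, with the explicit form of $\mathcal{F}$ dictated by whether $P$ is positive-critical or not.

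The first step is to produce a uniform integrable majorant. From \eqref{eqHar1} there is a function $c_2(\cdot)$ with $k_P^M(x,y,t)\le c_2(x)v^*(y)$ for all $y\in M$ and $t>1$. Covering the compact set $K$ by finitely many parabolic Harnack balls and transferring the bound from a reference point in each ball via Lemma~\ref{lem_Harnack}, one obtains a constant $C_K>0$ such that
\begin{equation*}
k_P^M(x,y,t)\le C_K\, v^*(y)\qquad\text{for all }x\in K,\ y\in M,\ t>2.
\end{equation*}
Since $v^*|f|\in L^1(M)$ by hypothesis, the integrand in $u(x,t)=\int_M k_P^M(x,y,t)f(y)\dy$ is dominated, in $y$, by the integrable function $C_K v^*(y)|f(y)|$, uniformly in $(x,t)\in K\times(2,\infty)$.

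Next, Theorem~\ref{thm1} provides the pointwise limit $k_\infty(x,y):=\lim_{t\to\infty}k_P^M(x,y,t)$, which equals $\vgf(x)\vgf^*(y)/\!\int_M\vgf\vgf^*\dz$ in the positive-critical case and $0$ in the subcritical or null-critical cases. In the critical case $\mathcal{C}_{P^*}(M)$ is one-dimensional, so $v^*$ is a positive multiple of $\vgf^*$; in particular $\vgf^*|f|\in L^1(M)$, which makes the formula for $\mathcal{F}$ well-defined. Applying the dominated convergence theorem for each fixed $x\in K$ yields
\begin{equation*}
u(x,t)\longrightarrow \int_M k_\infty(x,y)f(y)\dy=\mathcal{F}(x)\qquad\text{pointwise on }K.
\end{equation*}

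To promote this to uniform convergence on $K$, I will invoke interior parabolic regularity. The family $\{u(\cdot,t)\}_{t>2}$ is uniformly bounded on a slightly larger compact set $K'\Supset K$ (by $C_{K'}\!\int v^*|f|\dy$), and each $u(\cdot,t)$ solves $Lu=0$ there; hence interior parabolic Schauder estimates supply a uniform $C^{2+\alpha}$ bound on $K$ and, in particular, equicontinuity of $\{u(\cdot,t)\}_{t>3}$ on $K$. By Arzel\`a--Ascoli, any sequence $t_n\to\infty$ admits a subsequence along which $u(\cdot,t_n)$ converges uniformly on $K$, and the pointwise step identifies every such subsequential limit as $\mathcal{F}$. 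Consequently the full family converges uniformly, i.e.\ $\sup_{x\in K}|u(x,t)-\mathcal{F}(x)|\to 0$, which is the claim. The main technical point is upgrading the pointwise estimate $k_P^M(x,y,t)\le c_2(x)v^*(y)$ to the uniform bound $C_K v^*(y)$ on $K$; once this is in place, Theorem~\ref{thm1} together with standard parabolic regularity carries the argument through.
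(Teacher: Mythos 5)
Your argument is correct and follows essentially the same route as the paper: an integrable majorant from~\eqref{eqHar1} justifies passing the limit inside the integral by dominated convergence, and Theorem~\ref{thm1} identifies the pointwise limit as $\mathcal{F}(x)$. The one place you go beyond the paper's terse proof is the final step: the paper states the uniform conclusion on $K$ but only exhibits the pointwise limit, leaving the upgrade implicit, whereas you make it explicit by combining the Harnack-chain uniform bound $k_P^M(x,y,t)\le C_K v^*(y)$ for $x\in K$ with interior parabolic Schauder estimates and Arzel\`a--Ascoli; this is a correct and welcome clarification of a detail the paper glosses over.
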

\begin{proof}
Since $v^*f\in L^1(M)$,  estimate  (\ref{eqHar1}),
and the dominated convergence theorem imply that
$$\lim_{t\to\infty} u(x,t)= \lim_{t\to\infty} \int_{M} k_P^{M}(x,y,t)f(y)\dy= \int_{M}\lim_{t\to\infty}\big[ k_P^{M}(x,y,t)f(y)\big]\dy,$$
and the claim of the corollary follows from Theorem~\ref{thm1}.
\end{proof}

\vskip 3mm

Assume now that $P\mathbf{1}=0$ and $\int_M k_P^M(\cdot,y,t)\dy=\mathbf{1}$ (i.e. $\mathbf{1}$ is a positive invariant solution
of the operator $P$ in $M$). Corollary \ref{Cormainthm} implies
that for any $j\geq 1$ and all $x\in M$ we have
\begin{equation}\label{eqvMstar}
\lim_{t\to\infty}\int_{M_j^*}\!\! k_P^{M}(x,y,t)\dy\!=\!
  \begin{cases}
    \dfrac{\int_{M_j^*} \varphi^*(y)\dy}{\int_M
\varphi^*(y)\dy} & \!\text{if $P$ is positive-critical in $M$},
\\[4mm]
    1 & \!\text{otherwise}.
  \end{cases}\nonumber
\end{equation}
Suppose further that $P$ is not positive-critical in $M$, and $f$ is a
bounded continuous function such that $\liminf_{x\to\infty}
f(x)=\varepsilon>0$.  Then
\begin{equation}\label{eqliminf}
  \liminf_{t\to\infty}\int_{M} k_P^{M}(x,y,t)f(y)\dy\geq \varepsilon.
\end{equation}
 Hence, if the integrability condition of Corollary \ref{Cormainthm}
is not satisfied, then the large time behavior of the minimal
solution of the Cauchy problem may be complicated. The following
example of W.~Kirsch and B.~Simon \cite{KS} demonstrates this
phenomenon.
\begin{example}\label{exKS}{\em
Consider the heat equation in $\mathbb{R}^d$. Let $R_j:=\mathrm{e}^{\mathrm{e}^j}$
and let
 $$ f(x):=2+(-1)^j \qquad  \mbox{ if } \quad R_j<\sup_{1\leq i\leq d}
|x_i|<R_{j+1}, \;j\geq 1.$$
 Let $u$ be the minimal solution of the Cauchy problem with  initial data $f$.
Then for $t\sim R_jR_{j+1}$ one has that $u(0,t)\sim 2+(-1)^j$,
and thus $u(0,t)$ does not have a limit. Note that by Lemma
\ref{lemVa}, for $d=1$, $u(x,t)$ has exactly the same asymptotic
behavior as $u(0,t)$ for all $x\in \mathbb{R}$.

In fact, it was proved in \cite{RE} that for the heat equation on $\R^d$, the following holds: for any bounded function $f$ defined on $\R^d$, the limit
$$\lim_{t\to\infty}\int_{\R^d} k_{-\Gd}^{\R^d}(x,y,t)f(y)\dy$$
exists, if and only if the limit
$$\lim_{R\to\infty} \frac{1}{|B(x,R)|}\int_{B(x,R)}f(y) \dy $$ exists.
Moreover, the values of the two limits are equal. For an extension of the above result see \cite{PLi86}.
We note that such a theorem is false if the average value of $f$ is taken on solid cubes.

 }\end{example}
 The next three corollaries concern elliptic operators on manifolds.
 \begin{corollary}[\cite{CK}]\label{cor_l2}
    Let $M$ be a noncompact complete Riemannian manifold, and denote by $\Gd$ the corresponding Laplace-Beltrami operator. Then
    \begin{equation}\label{eq_l2}
       \ell_0:=\lim_{t\to\infty}k_{-\Delta}^{M}(x,y,t)=0
    \end{equation}
    if and only if $M$ has an infinite volume.
 \end{corollary}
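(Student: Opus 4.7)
The plan is to analyze each of the three cases of Theorem~\ref{thm1} for the operator $P=-\Gd$, exploiting two structural facts: first, $\Gd\mathbf{1}=0$ gives $\mathbf{1}\in\mathcal{C}_{-\Gd}(M)$ and consequently $\gl_0:=\gl_0(-\Gd,M)\geq 0$; second, $-\Gd$ is formally self-adjoint, so whenever $-\Gd-\gl_0$ is critical its ground states coincide, $\vgf^*=\vgf$, and (if $\gl_0=0$) they are positive multiples of $\mathbf{1}$ by uniqueness of the positive solution in the critical case. Combining the scaling identity $k_{-\Gd}^M(x,y,t)=\mathrm{e}^{-\gl_0 t}k_{-\Gd-\gl_0}^M(x,y,t)$ with Theorem~\ref{thm1} applied to $-\Gd-\gl_0$ (whose generalized principal eigenvalue is $0$), I will first observe that $\ell_0>0$ can hold only when $\gl_0=0$ and $-\Gd$ is positive-critical in $M$, since in every other case $k_{-\Gd-\gl_0}^M$ remains bounded while the exponential prefactor collapses the product to zero.

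For the forward direction $\mathrm{Vol}(M)=\infty\Rightarrow\ell_0=0$ I split into three subcases. If $\gl_0>0$, the observation above immediately yields $\ell_0=0$. If $\gl_0=0$ and $-\Gd$ is subcritical in $M$, Theorem~\ref{thm1}(i) gives $\ell_0=0$ directly. If $\gl_0=0$ and $-\Gd$ is critical in $M$, uniqueness forces $\vgf=\vgf^*=\mathbf{1}$ after normalization, so $\int_M\vgf\vgf^*\dV=\mathrm{Vol}(M)=\infty$ places us in the null-critical regime, and Theorem~\ref{thm1}(iii) again produces $\ell_0=0$.

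For the converse $\mathrm{Vol}(M)<\infty\Rightarrow\ell_0>0$ the input is that $\mathbf{1}\in L^2(M)\cap\mathcal{C}_{-\Gd}(M)$ is a positive $L^2$ harmonic function. The plan is to invoke standard criticality theory for symmetric operators (the Agmon--Allegretto--Piepenbrink identification of $\gl_0$ with the bottom of the $L^2$-spectrum, together with the uniqueness-of-ground-state characterization of criticality; cf.\ \cite{Pinsky,Agmon82}) to conclude that a positive $L^2$ solution at $\gl_0$ forces $\gl_0=0$ and makes $-\Gd$ positive-critical in $M$ with ground state $\vgf=\mathbf{1}$. Theorem~\ref{thm1}(ii) then yields the explicit value
\[
\ell_0=\dfrac{\mathbf{1}(x)\mathbf{1}(y)}{\int_M\mathbf{1}(z)^2\dV(z)}=\dfrac{1}{\mathrm{Vol}(M)}>0.
\]

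The main obstacle is this last step: no single lemma in the excerpt converts ``$\mathbf{1}\in L^2\cap\mathcal{C}_{-\Gd}(M)$'' directly into ``$-\Gd$ is positive-critical.'' One must identify $\gl_0$ with $\inf\mathrm{spec}_{L^2}(-\Gd)$ in the symmetric setting, observe that a positive $L^2$ eigenfunction at the spectral bottom precludes a second linearly independent positive solution (which would violate $L^2$-simplicity of the ground state), and then note that $\vgf^2\in L^1$ is precisely the positive-critical hypothesis of Theorem~\ref{thm1}(ii). Completeness of $M$ enters implicitly, ensuring essential self-adjointness of $-\Gd$ on $C_c^\infty(M)$ so that the classical harmonic function $\mathbf{1}\in L^2(M)$ genuinely sits in the domain of the self-adjoint extension.
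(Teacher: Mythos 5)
Your proof is correct and follows essentially the same route as the paper: apply Theorem~\ref{thm1} to $-\Delta$, use that $\mathbf{1}$ is a positive harmonic function to pin $\gl_0\geq 0$ and to identify the ground state in the critical case with a constant, and read off that positive-criticality of $-\Delta$ is exactly finiteness of volume. The only differences are presentational: in the infinite-volume direction you split off the subcritical case from the null-critical one, whereas the paper just observes that $-\Delta$ cannot be positive-critical (because $\mathbf{1}\notin L^2$) and then cites Theorem~\ref{thm1} for both remaining cases at once; and in the finite-volume direction you unpack the paper's one-line assertion that $-\Delta$ is positive-critical by invoking the Agmon--Allegretto--Piepenbrink identification $\gl_0=\inf\mathrm{spec}_{L^2}(-\Delta)$ together with essential self-adjointness on a complete manifold, a gap you correctly flag as not directly covered by the excerpt but which is indeed the intended standard input from criticality theory (a null-sequence built from cutoffs of $\mathbf{1}$ with small gradient gives criticality, and $\int_M\mathbf{1}^2<\infty$ then gives positive-criticality).
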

 \begin{proof}
If $M$ has a finite volume, then $-\Gd$ is positive-critical in $M$, and by Theorem~\ref{thm1}, we have $\ell_0=\big(\mathrm{vol}(M)\big)^{-1}>0$.

Suppose now that $M$ has an infinite volume. If $\gl_0>0$, Corollary~\ref{asser_log} implies that $\ell_0=0$. On the other hand, if $\gl_0=0$, then since $u=\mathbf{1}$ is a positive harmonic function which is not in $L^2(M)$, the uniqueness of the ground state implies that $-\Gd$ is not positive-critical in $M$. Hence, by Theorem~\ref{thm1} we have $\ell_0=0$.
 \end{proof}
Before studying the next two results we recall three basic notions from the theory of manifolds with group actions.
 \begin{definition}\label{degG}{\em
Let $G$ be a group, and suppose that $G$ acts on $M$.  For any real continuous function $v$ and $g\in G$, denote by $v^g$ the function defined by $v^g(x):=v(gx)$. Let $\R^*=\R\setminus \{0\}$ be the real multiplicative group.

1. A nonzero real continuous function $f$ on $M$ is called {\em $G$-multi\-plicative} if there exists a group homomorphism $\gg : G\to \R^*$, such that
$$f(gx)=\gg(g)f(x)\qquad \forall g\in G, x\in M.$$

2. A $G$-group action on $M$ is {\em compactly generating} if there exists $K\Subset M$ such that $GK=M$ (see \cite{LinP}).

3.  The operator $P$ is said to be $G$-equivariant if
$$P[u^g]=(P[u])^g\qquad \forall g\in G.$$
   }
 \end{definition}

\begin{corollary}[cf. \cite{CK}]\label{cor_covering}
Suppose that a noncompact manifold $M$ is a covering of a compact Riemannian
manifold, and consider the Laplace-Beltrami operator $\Gd$ on $M$.  Then
    \begin{equation}\label{eq_l2a}
       \ell :=\lim_{t\to\infty}\mathrm{e}^{\gl_0 t} k_{-\Delta}^{M}(x,y,t)=0.
    \end{equation}
 In particular, $-\Gd-\gl_0$ is not positive-critical in $M$.
     \end{corollary}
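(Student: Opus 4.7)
The plan is to apply Theorem~\ref{thm1} to the operator $P:=-\Delta-\lambda_0$, whose generalized principal eigenvalue is zero. By the shift identity $k_{-\Delta-\lambda_0}^M(x,y,t)=\mathrm{e}^{\lambda_0 t}k_{-\Delta}^M(x,y,t)$, it is enough to rule out the positive-critical case: in each of cases (i) and (iii) of Theorem~\ref{thm1}, the limit $\ell$ is automatically zero, and the ``in particular'' clause is then a restatement of the conclusion.

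Let $N$ be the compact base, and $G$ the deck-transformation group of the covering $M\to N$. Then $G$ acts on $M$ freely, properly discontinuously, and by isometries; since $M$ is noncompact, $G$ is infinite, and by cocompactness there exists a relatively compact fundamental domain $F\subset M$. Because $-\Delta$ is the Laplace--Beltrami operator of a $G$-invariant Riemannian metric, it is $G$-equivariant, and it is symmetric.

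Suppose, for contradiction, that $-\Delta-\lambda_0$ were critical in $M$, with ground state $\varphi$. For each $g\in G$, the function $\varphi^g(x):=\varphi(gx)$ is again a positive solution of $(-\Delta-\lambda_0)u=0$, so by the one-dimensionality of $\mathcal{C}_{-\Delta-\lambda_0}(M)$ there exists $\gamma(g)>0$ with $\varphi^g=\gamma(g)\varphi$; this $\gamma$ is a homomorphism $G\to\mathbb{R}_+^*$. By symmetry, $\varphi^*=\varphi$, so positive-criticality would amount to $\varphi^2\in L^1(M)$. Since each $g\in G$ preserves the Riemannian volume, a change of variables gives
\begin{equation*}
\int_M\varphi^2\,\dV \;=\; \int_M(\varphi^g)^2\,\dV \;=\; \gamma(g)^2\int_M\varphi^2\,\dV \qquad\forall\, g\in G.
\end{equation*}
If $\gamma(g_0)\neq 1$ for some $g_0\in G$, this identity forces $\int_M\varphi^2\,\dV\in\{0,\infty\}$, and since $\varphi>0$ we conclude $\int_M\varphi^2\,\dV=\infty$. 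Otherwise $\gamma\equiv 1$, so $\varphi$ is $G$-invariant; tiling $M=\bigsqcup_{g\in G}gF$ then gives $\int_M\varphi^2\,\dV=|G|\int_F\varphi^2\,\dV=\infty$, using that $|G|=\infty$ and $\varphi$ is bounded below by a positive constant on the compact set $\overline F$. Either way $\varphi^*\varphi\notin L^1(M)$, contradicting the assumed positive-criticality.

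The main obstacle is exactly the dichotomy in the last step: criticality alone only identifies $\varphi$ up to a character of $G$, so one must separately handle the possibilities that this character is trivial or nontrivial. Both cases are disposed of by the single volume-invariance estimate above, which relies essentially on isometric invariance of the deck action and on cocompactness. Once positive-criticality is excluded, Theorem~\ref{thm1} delivers $\ell=0$.
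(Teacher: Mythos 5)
Your proof is correct and follows essentially the same route as the paper: invoke Theorem~\ref{thm1}, use the symmetry of $-\Delta$ to get $\varphi^*=\varphi$, deduce from uniqueness of the ground state that $\varphi$ is $G$-multiplicative for the (infinite) deck group, and conclude $\varphi\notin L^2(M)$. The only difference is that the paper leaves the last step (``$G$-multiplicative plus $G$ infinite contradicts $\varphi\in L^2$'') unelaborated, while you supply it cleanly via the volume-invariance identity and the dichotomy on whether the character $\gamma$ is trivial, which is exactly the detail one would want to see.
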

 \begin{proof}
Suppose that $\ell> 0$. By Theorem~\ref{thm1}, the operator $-\Gd-\gl_0$ is positive-critical in $M$, and in particular, the ground state $\varphi$ is in $L^2(M)$. The uniqueness of the ground state implies that $\varphi$ is $G$-multiplicative, where $G$ is the deck transformation.
But this contradicts the assumption that  $\varphi\in L^2$, since $G$ is an infinite Group.
 \end{proof}
The following corollary deals with manifolds with group actions and {\em general} equivariant operators, without any assumption on the growth of the acting groups. It generalizes Corollary~\ref{cor_covering}, and seems to be new.
\begin{corollary}\label{cor_l3}
Let $M$ be a noncompact manifold, and let $G$ be a group. Suppose that $G$ acts on $M$, and that the $G$-group action on $M$ is compactly generating. Assume that $P$ is a $G$-equivariant second-order elliptic operator  of the form \eqref{P} which is defined on $M$. Suppose further that any nonzero $G$-multiplicative $C^{2,\ga}$-function is not integrable on $M$.
 Then
    \begin{equation}\label{eq_l3}
\ell:=\lim_{t\to\infty}\mathrm{e}^{\gl_0 t} k_{P}^{M}(x,y,t)=0.
    \end{equation}
     \end{corollary}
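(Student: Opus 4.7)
The plan is to argue by contradiction, following the template of the proof of Corollary~\ref{cor_covering}. Suppose that $\ell>0$. Since $\lambda_0(P,M)\geq 0$ and $\mathrm{e}^{\lambda_0 t}k_P^M(x,y,t)$ does admit a limit by Theorem~\ref{thm1}, the only one of the three cases in Theorem~\ref{thm1} that is compatible with $\ell>0$ is the positive-critical case. Hence $P-\lambda_0$ is critical in $M$, and the ground states $\varphi$ and $\varphi^*$ of $P-\lambda_0$ and $P^*-\lambda_0$, respectively, satisfy
\begin{equation*}
\int_M \varphi(z)\varphi^*(z)\dz<\infty.
\end{equation*}

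Next I would exploit the $G$-equivariance together with uniqueness of the ground state to show that both $\varphi$ and $\varphi^*$ are $G$-multiplicative. Given $g\in G$, the function $\varphi^g(x):=\varphi(gx)$ satisfies $(P-\lambda_0)\varphi^g=((P-\lambda_0)\varphi)^g=0$, hence $\varphi^g\in \mathcal{C}_{P-\lambda_0}(M)$. Since $P-\lambda_0$ is critical, $\dim \mathcal{C}_{P-\lambda_0}(M)=1$, so there is a positive scalar $\gamma(g)$ with $\varphi^g=\gamma(g)\varphi$. The cocycle identity $\varphi^{gh}=(\varphi^h)^g$ immediately gives $\gamma(gh)=\gamma(g)\gamma(h)$, so $\gamma:G\to\R^*$ is a homomorphism and $\varphi$ is $G$-multiplicative. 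The same argument applied to the $G$-equivariant operator $P^*$ (which is $G$-equivariant because $P$ is) produces a character $\gamma^*:G\to\R^*$ with $(\varphi^*)^g=\gamma^*(g)\varphi^*$.

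Consequently the positive function $\Psi:=\varphi\varphi^*$ satisfies
\begin{equation*}
\Psi(gx)=\varphi(gx)\varphi^*(gx)=\gamma(g)\gamma^*(g)\Psi(x)\qquad\forall g\in G,\ x\in M,
\end{equation*}
so $\Psi$ is a nonzero $G$-multiplicative function with character $\gamma\cdot\gamma^*:G\to\R^*$. By elliptic regularity (Hölder continuity of the coefficients of $P$ and $P^*$), both $\varphi$ and $\varphi^*$ belong to $C^{2,\alpha}_{\loc}(M)$, and therefore so does $\Psi$. Thus $\Psi$ is a nonzero $G$-multiplicative $C^{2,\alpha}$-function on $M$, and the hypothesis of the corollary forces $\Psi\notin L^1(M)$, which contradicts positive-criticality. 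Hence $\ell=0$.

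The main obstacle I anticipate is really just the verification that $P^*$ is $G$-equivariant whenever $P$ is; this is the place where the geometric nature of the $G$-action (preserving the reference measure used to form the formal adjoint) is implicitly used. The compactly generating hypothesis $GK=M$ does not enter the contradiction step itself, but is what makes the non-integrability hypothesis a natural one: a $G$-multiplicative function is determined on $M$ by its restriction to $K$ together with the character, so integrability over $M$ reduces to a summability condition on $\gamma\cdot\gamma^*$ over $G$-orbits of $K$.
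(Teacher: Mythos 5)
Your proof is correct and follows exactly the argument in the paper: assume $\ell>0$, invoke Theorem~\ref{thm1} to land in the positive-critical case, use uniqueness of the ground state together with $G$-equivariance to deduce that $\varphi$ and $\varphi^*$ (and hence $\varphi\varphi^*$) are $G$-multiplicative, and contradict the non-integrability hypothesis since $\varphi\varphi^*\in L^1(M)$. One small slip: with the paper's convention $v^g(x)=v(gx)$, the correct identity is $\varphi^{gh}=(\varphi^g)^h$, not $(\varphi^h)^g$, though the resulting homomorphism property $\gamma(gh)=\gamma(g)\gamma(h)$ is the same; your remark about $G$-equivariance of $P^*$ is a fair caveat that the paper also leaves implicit.
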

 \begin{proof} Suppose that $\!\ell\!>\! 0$. By Theorem~\ref{thm1},  the operator $P\!-\!\gl_0\!$ is positive-critical in $M$. In particular, $\dim \mathcal{C}_{P\!-\!\gl_0}(M)\!=\!\dim \mathcal{C}_{P^*-\gl_0}(M)=1$. By the uniqueness of the ground state, it follows that the corresponding ground states $\vgf$ and $\vgf^*$ are $G$-multiplicative. Therefore, their product $\varphi\vgf^*$ is also $G$-multiplicative, and we arrived at a contradiction since by our assumption a nonzero $G$-multiplicative $C^{2,\ga}$-function is not integrable on $M$.
 \end{proof}
\begin{remark}\label{rem_linp}{\em

1. Corollary~\ref{cor_l3} clearly applies to the case where $M$ is a regular covering of a compact Riemannian manifold, and $G$ is the corresponding deck transformation. In fact, in this case, by \cite[Theorem~5.17]{LinP},  the product of the ground states  is a positive $G$-invariant function, and therefore it is not integrable on $M$.

2. Corollary~\ref{cor_l3} applies in particular to the case where $P$ is defined on $\R^d$, and $P$ is $\mathbb{Z}^d$-equivariant (that is, $P$ has $\mathbb{Z}^d$-periodic coefficients). In this case it is known that $P-\gl_0$ is (null)-critical if and only if $d=1,2$. For further related results, see \cite{LinP}.
 }
\end{remark}

\vskip 3mm

Finally, let us introduce an important subclass of elliptic operators $P$ such that $P-\gl_0$ is positive-critical in $M$.
\begin{definition}\label{IU}{\em
Assume that $P$ is symmetric and  $P-\gl_0$ is positive-critical in $M$ with a ground state $\varphi$ satisfying the normalization condition $\|\varphi\|_{L^2(M)}=1$. Let $T_P^M(t)$ be the corresponding (Dirichlet) semigroup on $L^2(M)$ generated by $P$.  We say that $T_P^M(t)$ is
{\em intrinsically ultracontractive} if for any $t > 0$ there exists a positive-constant $C_t$ such that
$$0\leq \mathrm{e}^{\gl_0 t} k_{P}^{M}(x,y,t)\leq c_t\varphi(x)\varphi(y)\qquad \forall x,y\in M.$$
 }
\end{definition}
\begin{example}\label{ex_IU}{\em
If $M$ is a smooth bounded domain and $P:=-\Gd$, then the semigroup $T_P^M(t)$ is intrinsically ultracontractive on $L^2(M)$. Also, let $M:=\R^d$ and $P:=-\Gd+(1+|x|^2)^{\ga/2}$, where $\ga\in\R$. Then $T_P^M(t)$ is intrinsically ultracontractive on $L^2(M)$ if and only if $\ga>2$ (see also \cite{Dheat,Mskew} and references therein).
 }
\end{example}
It turns out that in the intrinsically ultracontractive case the rate of the convergence (as $t\to\infty$)  of the $h$-transformed heat kernel is uniformly exponentially fast.
\begin{theorem}[{\cite[Theorem 4.2.5]{Dheat}}]\label{thmIU}
Assume that $P$ is symmetric and subcritical in $M$, and suppose that $T_P^M(t)$ is intrinsically ultracontractive  on $L^2(M)$. Then there exists a complete orthonormal set $\{\varphi_j\}_{j=0}^\infty$ in $L^2(M)$ such that $\varphi_0=\varphi$, and for
any $j\geq 0$ the function $\varphi_j$ is an eigenfunction of the Friedrichs extension of the operator $P$  with eigenvalue $\gl_j$, where $\{\gl_j\}$ is nondecreasing. Moreover, the heat kernel has the eigenfunction expansion \eqref{eq_ef_exp},  and there exist positive-constants $C$ and $\gd$ such that
$$\left|e^{\lambda_0 t} \frac{k_P^{M}(x,y,t)}{\varphi_0(x)\varphi_0(y)}-1\right|\leq Ce^{-\gd t}\qquad \forall x,y\in M, t > 1. $$

\vskip 2mm

\noindent Furthermore, for  any $\vge > 0$ the series
$$e^{\lambda_0 t} \frac{k_P^{M}(x,y,t)}{\varphi_0(x)\varphi_0(y)}=\sum_{n=0}^\infty e^{-\lambda_n t}\frac{\varphi_n(x)\varphi_n(y)}{\varphi_0(x)\varphi_0(y)}$$
converges uniformly on $M\times M\times[\vge,\infty)$.
 \end{theorem}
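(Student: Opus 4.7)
The plan is to convert intrinsic ultracontractivity (IU) into a Hilbert-Schmidt bound, harvest a spectral decomposition of the Friedrichs extension of $P$, and then feed IU back in via Cauchy-Schwarz to upgrade $L^2$-convergence of the spectral expansion to a uniform pointwise exponential estimate weighted by the ground state.

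First, the IU bound
\[0\leq k_P^M(x,y,t)\leq c_t\,\mathrm{e}^{-\gl_0 t}\varphi(x)\varphi(y)\]
combined with $\|\varphi\|_{L^2(M)}=1$ shows that $k_P^M(\cdot,\cdot,t)\in L^2(M\times M)$ for every $t>0$, so $T_P^M(t)$ is a self-adjoint Hilbert--Schmidt, hence compact, operator. The spectral theorem then yields a complete orthonormal basis $\{\varphi_j\}_{j\geq 0}\subset L^2(M)$ of eigenfunctions of the Friedrichs extension of $P$ with nondecreasing eigenvalues $\gl_j\to\infty$; one may take $\varphi_0=\varphi$, and the uniqueness of the ground state for the positive-critical operator $P-\gl_0$ (built into Definition~\ref{IU}) forces simplicity of $\gl_0$, hence a strict spectral gap $\gd:=\gl_1-\gl_0>0$. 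For fixed $y\in M$ and $t>0$, IU places $k_P^M(\cdot,y,t)$ in $L^2(M)$ with $\langle k_P^M(\cdot,y,t),\varphi_n\rangle=(T_P^M(t)\varphi_n)(y)=\mathrm{e}^{-\gl_n t}\varphi_n(y)$, so Parseval gives the $L^2$-expansion $k_P^M(\cdot,y,t)=\sum_n \mathrm{e}^{-\gl_n t}\varphi_n(y)\varphi_n$; pointwise equality \eqref{eq_ef_exp} follows once the term-by-term absolute convergence proved in the next step is available.

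The main step is the pointwise tail estimate. The diagonal expansion combined with the diagonal IU bound gives
\[\sum_{n=0}^\infty \mathrm{e}^{-(\gl_n-\gl_0)t}\varphi_n(x)^2 = \mathrm{e}^{\gl_0 t}k_P^M(x,x,t)\leq c_t\,\varphi(x)^2\]
for every $t>0$. Fixing $t_0=1$ and writing $t=1+s$ with $s\geq 0$, and using $\gl_n-\gl_0\geq \gd$ for $n\geq 1$,
\[\sum_{n\geq 1} \mathrm{e}^{-(\gl_n-\gl_0)t}\varphi_n(x)^2 \leq \mathrm{e}^{-\gd s}\sum_{n\geq 1} \mathrm{e}^{-(\gl_n-\gl_0)}\varphi_n(x)^2\leq c_1\,\mathrm{e}^{-\gd s}\varphi(x)^2.\]
Cauchy--Schwarz applied to the tail $\sum_{n\geq 1}\mathrm{e}^{-(\gl_n-\gl_0)t}\varphi_n(x)\varphi_n(y)$ then yields
\[\left|\mathrm{e}^{\gl_0 t}k_P^M(x,y,t)-\varphi(x)\varphi(y)\right|\leq c_1\,\mathrm{e}^{\gd}\,\mathrm{e}^{-\gd t}\varphi(x)\varphi(y)\qquad (t\geq 1),\]
which is the stated estimate with $C:=c_1\mathrm{e}^{\gd}$. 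Uniform convergence of the series on $M\times M\times[\vge,\infty)$ is obtained by the same Cauchy--Schwarz argument applied to the tail starting at $n\geq N$ with $t_0:=\vge/2$: this yields a bound proportional to $\mathrm{e}^{-(\gl_N-\gl_0)\vge/2}\varphi(x)\varphi(y)$, uniform in $(x,y,t)$ with $t\geq\vge$ and tending to zero as $N\to\infty$.

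The only real obstacle is extracting pointwise estimates from what is a priori only $L^2$-spectral data, and this is exactly what IU is designed to do: the diagonal inequality $\sum_n \mathrm{e}^{-(\gl_n-\gl_0)t_0}\varphi_n(x)^2\leq c_{t_0}\varphi(x)^2$ packages both the spectral information and the ground-state weight so that, after a single Cauchy--Schwarz, the spectral gap $\gd>0$ is converted into a pointwise exponential rate uniform in $(x,y)$.
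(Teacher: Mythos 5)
The paper itself does not prove this theorem; it cites it directly as Theorem~4.2.5 of Davies' \emph{Heat Kernels and Spectral Theory}. So there is no in-paper proof to compare against. Your proof is correct and, as far as I can tell, essentially reproduces the argument from Davies' book: IU gives a Hilbert--Schmidt bound, hence compactness and a spectral expansion with a gap $\delta=\lambda_1-\lambda_0>0$; the on-diagonal expansion plus IU at a fixed small time gives the weighted summability $\sum_n e^{-(\lambda_n-\lambda_0)t_0}\varphi_n(x)^2\leq c_{t_0}\varphi(x)^2$; and Cauchy--Schwarz on the $n\geq1$ tail converts the spectral gap into a uniform exponential rate. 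The uniform-convergence claim on $M\times M\times[\vge,\infty)$ follows in the same way from the $n\geq N$ tail, and your bound of order $e^{-(\lambda_N-\lambda_0)\vge/2}$ indeed tends to zero since $\lambda_N\to\infty$ (which requires, as you implicitly use, that $M$ has infinite-dimensional $L^2$).

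One small expository gap: you invoke the \emph{pointwise} on-diagonal expansion $k_P^M(x,x,t)=\sum_n e^{-\lambda_n t}\varphi_n(x)^2$ to derive the key bound, while saying that pointwise validity of the off-diagonal expansion will ``follow once the term-by-term absolute convergence\dots is available.'' As written this has a whiff of circularity, and absolute pointwise convergence plus $L^2$-convergence does not by itself force the pointwise sum to agree with the $L^2$-limit everywhere. The clean fix, which is standard and implicit in your setup, is the semigroup identity together with Parseval:
\begin{equation*}
k_P^M(x,y,t)=\int_M k_P^M(x,z,t/2)\,k_P^M(z,y,t/2)\dz
=\bigl\langle k_P^M(x,\cdot,t/2),\,k_P^M(\cdot,y,t/2)\bigr\rangle
=\sum_{n} e^{-\lambda_n t}\varphi_n(x)\varphi_n(y),
\end{equation*}
using that $\langle k_P^M(x,\cdot,s),\varphi_n\rangle=e^{-\lambda_n s}\varphi_n(x)$. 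This yields the pointwise expansion (diagonal and off-diagonal alike) for every $x,y,t$ directly, with no circularity; you should state this rather than lean on absolute convergence. With that adjustment the proof is complete.
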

 \begin{remark}\label{rem_M}{\em
Murata \cite{MJFA07} proved that if $T_P^M(t)$ is intrinsically ultracontractive  on $L^2(M)$, then $\mathbf{1}$ is a small perturbation of $P$ in $M$. In particular, for any $n\geq 0$ the function $\vgf_n/\vgf_0$ is bounded, and has a continuous extension up to the Martin boundary of $M$ (with respect to $P$) \cite{P99}. On the other hand, an example of Ba\~{n}uelos and Davis in \cite{BD} gives
us a finite area domain $M\subset\R^2$ such that $\mathbf{1}$ is a small perturbation of the
Laplacian in $M$, but the corresponding semigroup is not intrinsically ultracontractive.
 }
 \end{remark}

 \begin{remark}\label{remdec}{\em
In the null-recurrent case, the heat kernel may decay very slowly
as $t\to \infty$, and one can construct a complete Riemannian
manifold $M$ such that all its Riemannian products $M^j, j\geq 1$
are null-recurrent with respect to the Laplace-Beltrami operator on $M^j$ (see \cite{CG}).
}
\end{remark}
\mysection{Davies' conjecture concerning strong ratio limit}\label{sect1DC}
Having proved in Section~\ref{secmainthm} that $\lim_{t\to\infty} \mathrm{e}^{\lambda_0
t}k_P^{ M}(x,y,t)$ always exists, we next ask how fast
this limit is approached.  It is natural to conjecture that the
limit is approached equally fast for different points $x,y\in
 M$. Note that in the context of Markov chains, such an
{\em (individual) strong ratio limit property} is in general not
true \cite{Chu}. The following conjecture was raised by
E.~B.~Davies \cite{D} in the selfadjoint case.
\begin{conjecture}[Davies' conjecture]\label{conjD}
Let $Lu=u_t+P(x, \partial_x)u$ be a parabolic operator which is
defined on a noncompact Riemannian manifold $ M$, and assume that $\gl_0(P,M)\geq 0$. Fix a reference
point $x_0\in  M$. Then
\begin{equation}\label{eqconjD}
\lim_{t\to\infty}\frac{k_P^ M(x,y,t)}{k_P^ M(x_0,x_0,t)}=a(x,y)
\end{equation}
exists and is positive for all $x,y\in  M$.
\end{conjecture}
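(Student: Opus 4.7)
The plan is to first reduce to the case $\lambda_0(P,M)=0$ via the identity $k_{P-\lambda_0}^M(x,y,t) = \mathrm{e}^{\lambda_0 t} k_P^M(x,y,t)$. Then split according to the trichotomy of Theorem~\ref{thm1}. In the positive-critical case the conjecture follows immediately from part~(ii): $\mathrm{e}^{\lambda_0 t}k_P^M(x,y,t)$ converges to the positive-valued function $\varphi(x)\varphi^*(y)/\int_M\varphi^*\varphi$, so both numerator and denominator in the ratio converge to strictly positive values, yielding
\[
a(x,y)=\frac{\varphi(x)\varphi^*(y)}{\varphi(x_0)\varphi^*(x_0)}.
\]
The remaining (and interesting) task is to handle the subcritical and null-critical cases, where both numerator and denominator tend to $0$.

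For these two cases I would adopt a compactness approach. Set $a_t(x,y):=k_P^M(x,y,t)/k_P^M(x_0,x_0,t)$. Using the parabolic Harnack inequality (Lemma~\ref{lem_Harnack}) applied both in the $x$-variable (using that $k_P^M(\cdot,y,\cdot)$ solves $Lu=0$) and in the $y$-variable (using $L^*$), together with the semigroup identity \eqref{eq_sgi} to compare different base-times, one shows that for every compact $K\subset M$ there is a constant $C_K>0$ such that $C_K^{-1}\leq a_t(x,y)\leq C_K$ for all $x,y\in K$ and $t\geq 1$. Standard parabolic Schauder-type estimates then yield equicontinuity, so Arzel\`a--Ascoli produces a sequence $t_n\to\infty$ along which $a_{t_n}\to a$ locally uniformly on $M\times M$, with $a$ strictly positive. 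Using the semigroup identity
\[
a_{t+s}(x,y)\,\frac{k_P^M(x_0,x_0,t)}{k_P^M(x_0,x_0,t+s)}=\int_M k_P^M(x,z,s)\,a_t(z,y)\,\mathrm{d}z
\]
and Lebesgue dominated convergence (justified by the two-sided bounds obtained above combined with \eqref{eqHar1}), one shows that along any such subsequence the ratio $k_P^M(x_0,x_0,t_n+s)/k_P^M(x_0,x_0,t_n)$ converges to a multiplicative function $\mu(s)$; together with semigroup composition this forces $\mu(s)\equiv 1$ (because $\lambda_0=0$) and implies that $a(x,\cdot)\in\mathcal{C}_{P^*}(M)$, $a(\cdot,y)\in\mathcal{C}_P(M)$, both of minimal growth at infinity.

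The main obstacle is \emph{uniqueness of the subsequential limit}. In the null-critical case the minimal-growth property combined with criticality (which forces $\dim\mathcal{C}_P(M)=\dim\mathcal{C}_{P^*}(M)=1$) implies $a(\cdot,y)=c(y)\varphi(\cdot)$ and $a(x,\cdot)=c^*(x)\varphi^*(\cdot)$, so $a(x,y)=C\varphi(x)\varphi^*(y)$, and the normalization $a(x_0,x_0)=1$ determines the constant, so the limit is unique and the conjecture holds. The genuinely hard case is the subcritical one, where $\mathcal{C}_P(M)$ and $\mathcal{C}_{P^*}(M)$ may be large and there is no reason \emph{a priori} that subsequential limits should factor as a single product $\varphi(x)\varphi^*(y)$, let alone coincide. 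Here I would attempt to identify the limit by studying the parabolic Martin boundary of $L$ on $M\times\mathbb{R}$ at the ``time $+\infty$'' ideal point and proving that this boundary point is minimal; an alternative is to impose additional structural hypotheses (e.g.\ intrinsic ultracontractivity, or that $\mathbf{1}$ is a small perturbation of $P-\lambda_0$ in the sense of Definition~\ref{spertdef}) under which Theorem~\ref{thmIU}-type spectral expansions can be invoked to pin down the limit. This is precisely the point at which the conjecture remains open in full generality, and where any genuinely new input is needed.
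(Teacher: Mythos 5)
This statement is a \emph{conjecture}, not a theorem; the paper does not prove it, and indeed Theorem~\ref{thm_Kozma} (Kozma) shows it fails in general in the discrete setting, with indications that the manifold case can be made to fail as well (with a ratio that is \emph{bounded between two constants} but has no limit). So no complete proof can exist, and any correct treatment must be conditional. With that in mind, here is how your proposal compares to what the paper actually does.

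Your handling of the positive-critical case is correct and is precisely Remark~\ref{6Rem82}(1): Theorem~\ref{thm1}(ii) gives a positive limit for both numerator and denominator. You also correctly identify the subcritical case as the genuinely open one, and your instinct to attack it via the parabolic Martin boundary at the time~$-\infty$ ideal point is exactly the route of Theorem~\ref{thmconjD} --- but note that that theorem is \emph{conditional}, hypothesizing \eqref{eqconjDw17}, and it yields an equivalence of properties rather than an unconditional proof.

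The real gap is in your treatment of the null-critical case, where you overclaim. Two steps you treat as ``one shows'' are, in the nonsymmetric setting, genuine extra hypotheses and cannot be derived from the parabolic Harnack inequality alone. First, the uniform two-sided bound $C_K^{-1}\le a_t(x,y)\le C_K$ for $t\ge 1$ does not follow from Lemma~\ref{lem_Harnack}: parabolic Harnack compares $u$ at time $t$ with $u$ at a \emph{later} time $t+\theta R^2$, and converting this into a same-time, elliptic-type Harnack inequality \eqref{eqstronghara} requires precisely the on-diagonal temporal control \eqref{eqconjDw17}, i.e.\ $\limsup_t k_P^M(x_0,y_0,t+s_0)/k_P^M(x_0,y_0,t)<\infty$. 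In the symmetric case this is free (log-convexity of $t\mapsto k_P^M(x,x,t)$, giving \eqref{eq_xyt+s}); in general it is not (see Remark~\ref{remellHar}). Second, your claim that the subsequential base-point ratio converges to a multiplicative $\mu(s)$ with $\mu\equiv1$ ``because $\lambda_0=0$'' is not supported by what the paper actually proves: Lemma~\ref{assliminfsup} yields only $\liminf\le1\le\limsup$, and the statement $\lim_t k_P^M(x,y,t+s)/k_P^M(x,y,t)=1$ (condition \eqref{eqconjDw27}) is exactly the nontrivial hypothesis in Lemma~\ref{lem6}(a), equivalent to the time-independence of subsequential limit solutions. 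Without it, your subsequential limit is only an element of $\mathcal{H}_P(M\times\mathbb{R}_-)$, not an elliptic solution, and the appeal to uniqueness of the ground state does not apply. The paper's result for the null-critical case is therefore \emph{conditional}: Lemma~\ref{lem63} assumes the one-sided ratio condition \eqref{eqconjDw71} and then runs essentially the argument you sketch --- the subsequential limit becomes a positive supersolution with $u_s\le0$, criticality forces it to be a solution, so $u_s=0$, and uniqueness of $\varphi,\varphi^*$ does the rest. So the null-critical case is \emph{not} settled unconditionally; your proof proposal should be reframed as a conditional argument \`a la Lemmas~\ref{lem6} and \ref{lem63}, with the two hypotheses above made explicit.
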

The aim of the present section is to discuss Conjecture~\ref{conjD}
and closely related problems, and to obtain some results under
minimal assumptions. Since, the conjecture does not depend on the value of
$\lambda_0$, we assume throughout the present section that $\lambda_0=0$.

\vskip 3mm

Conjecture~\ref{conjD} was recently disproved by G.~Kozma \cite{Kozma} in the {\em discrete} setting:
\begin{theorem}[Kozma \cite{Kozma}]\label{thm_Kozma}
There exists a connected graph $G$ with bounded weights and two vertices
$x, y \in G$ such that
\begin{equation}\label{Kozma}
\lim_{n\to\infty}\frac{k(x, x, n)}{k(y, y, n)}
\end{equation}
does not exist, where $k$ is the heat kernel of the lazy random walk such that the walker, at every step,
chooses with probability $1/2$ to stay in place, and with probability $1/2$ to move to one
of the neighbors (with probability proportional to the given weights).
\end{theorem}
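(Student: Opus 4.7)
The statement is a pure existence/counterexample result, so the plan is to design an explicit weighted graph with a multi-scale structure engineered so that two diagonal heat-kernel values oscillate out of phase. The rough idea is as follows. Along a ray (say a path $\{v_0, v_1, v_2, \ldots\}$) attach a sequence of finite ``gadgets'' $G_1, G_2, G_3, \ldots$ of two alternating types $A$ and $B$ at positions $n_k$ that grow very rapidly (e.g.\ $n_{k+1} \gg n_k^2$). A gadget of type $A$ will look ``fat/expander-like'' (so that a walker entering it equilibrates quickly and leaves essentially uniformly), while a gadget of type $B$ will look ``thin/line-like'' (so that the walker spends a long time localized). The vertices $x$ and $y$ will be placed inside gadgets of opposite types attached at the same position, so that at time scales corresponding to type $A$ the ratio $k(x,x,n)/k(y,y,n)$ is one value, and at time scales corresponding to type $B$ it is another.

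The concrete steps would be: \emph{(i)} Fix parameters $(n_k)$ very sparse and gadget sizes $(r_k)$ so that, for $n \in [T_k^A, T_k'^A]$, the lazy walk started at $x$ has essentially equilibrated within its $k$-th type-$A$ gadget but not yet diffused to scale $n_{k+1}$; do the same for $y$ at times $[T_k^B, T_k'^B]$, choosing the windows to interleave. \emph{(ii)} Estimate $k(x,x,n)$ using the spectral decomposition of the random walk restricted to the effective gadget that dominates at time $n$: for $x$ this alternates between a spectral gap regime (giving $k(x,x,n) \asymp 1/|G_k^A|$) and a diffusive regime (giving $k(x,x,n) \asymp 1/\sqrt{n}$, say). \emph{(iii)} Glue these local estimates to global ones using the bounded-weight hypothesis together with standard Gaussian upper and lower bounds for walks on bounded-geometry graphs, verifying that contributions from excursions out of the dominant gadget are negligible on the relevant windows.

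The key quantitative input is a clean comparison lemma: for $n$ in the window $[T_k, T_k']$, the heat kernel $k(z,z,n)$ equals $(1+o(1))$ times the heat kernel of the walk killed upon exiting the $k$-th gadget containing $z$. This requires choosing $n_{k+1}/n_k$ and the inter-gadget distances so that the hitting time of the next gadget is much longer than $T_k'$, while the mixing time of the current gadget is much shorter than $T_k$. Once this is established, one reads off from the explicit gadget spectra that $k(x,x,n)/k(y,y,n)$ alternates between two distinct limits along two disjoint subsequences.

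The main obstacle is the interaction between scales: one must rule out the possibility that the rapid mixing in $A$-type gadgets at scale $k$ contaminates the slow behavior at $y$, and vice versa. This is why the scale separation must be super-polynomial (so that on the relevant window the walker from one vertex has essentially no probability of visiting the other's current-scale gadget). A secondary obstacle is to preserve boundedness of the weights; this forces the ``fat'' gadgets to achieve expansion through topology (e.g.\ a small expander with all weights equal to $1$) rather than through large edge weights. Modulo these technical choices, the construction delivers the desired two-vertex pair $x,y$ and sequences $n_k^A, n_k^B \to \infty$ along which the ratio in \eqref{Kozma} tends to two different positive limits, proving that the full limit does not exist.
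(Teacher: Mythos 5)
Note first that the paper does not prove this theorem: it states Kozma's result with a citation to his preprint (arXiv:1111.4593), and the only additional information it records is that in Kozma's example the ratio $k(x,x,n)/k(y,y,n)$ remains bounded between two positive constants. There is therefore no ``paper's own proof'' against which to compare your proposal, and the assessment below treats your sketch on its own terms.

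Your multi-scale idea is sensible in broad outline, but the proposal is a plan, not a proof, and the parts you defer as ``technical choices'' are precisely where the mathematical content lies. Two structural issues stand out. First, the geometry is underspecified to the point of incoherence: you attach gadgets $G_1,G_2,\dots$ at positions $n_1<n_2<\dots$ along a ray and then ask for $x$ and $y$ to be ``inside gadgets of opposite types attached at the same position,'' but each position $n_k$ carries a single gadget of a single type, so $x$ and $y$ cannot both be as described; and if instead they sit in two separate fixed gadgets at bounded mutual distance, you must explain why the local distinction between ``fat'' and ``thin'' continues to influence the diagonal heat kernel at arbitrarily large times rather than being washed out once $\sqrt{n}$ exceeds the gadget diameters (for the ratio to fail to converge, $x$'s and $y$'s environments must differ at infinitely many rapidly increasing scales, and nothing in the construction provides that). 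Second, your key comparison lemma---that on suitable windows $k(z,z,n)$ equals $(1+o(1))$ times the kernel killed at the boundary of $z$'s gadget---cannot be correct as stated in the diffusive regime: there the killed kernel is exponentially small in $n$, whereas $k(z,z,n)\asymp n^{-1/2}$ is governed by the conductance profile of the entire ball of radius $\asymp\sqrt{n}$, including all the other gadgets it meets. Finally, for the ratio to oscillate between two positive limits and remain bounded (as the paper records for Kozma's example), your two regimes must yield returns of the \emph{same} order in $n$ with distinct constants, which couples the gadget sizes and the time windows by precise relations you have not written down and that are not automatic. These are gaps in the mechanism, not deferrable estimates; without them the argument does not get off the ground.
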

In addition, Kozma indicates in \cite{Kozma} how the construction
might be carried out in the category of Laplace-Beltrami operators on manifolds with bounded geometry. As noted in \cite{Kozma}, the bounded weights property in the graph's setting is the analogue to the bounded geometry property in the manifold
setting, a property that in fact was not assumed in Davies' conjecture.
We note that in Kozma's example the ratio $\frac{k(x, x, n)}{k(y, y, n)}$ is bounded between two constants that are independent of $t$.

\vskip 3mm

Nevertheless, as we show below, there are many situations where Conjecture~\ref{conjD} holds true.
\begin{rem}\label{6Rem82}{\em
1. Theorem~\ref{thm1} implies that Conjecture~\ref{conjD} holds
true in the positive-critical case, therefore, we assume throughout the present section that $P$ {\em is not positive-critical}.

\vskip 3mm

2. We also note that if $P$ is {\em symmetric}, then the conjecture holds if
\begin{equation}\label{eq_liouv}
\dim \mathcal{C}_{P-\gl_0}(M)=1,
\end{equation}
(i.e. in the ``Liouvillian" case) see \cite[Corollary 2.7]{ABJ}.
In particular, it holds true for {\em critical selfadjoint} operators. It also holds for the Laplace-Beltrami operator on a complete Riemannian manifold of dimension $d$ with
nonnegative Ricci curvature \cite{D}.

\vskip 3mm

3. Recently Agmon \cite{Agp} obtained the exact asymptotics (in $(x,y,t)$) of the heat kernel for a $\mathbb{Z}^d$-periodic
(non-selfadjoint) operator $P$ on $\mathbb{R}^d$, and for an equivariant operator $P$ defined on {\em abelian} cocompact covering manifold $M$. In particular, it follows from
Agmon's results that Conjecture~\ref{conjD} holds true in these cases. Note that in these cases (and even in the case of {\em nilpotent} cocompact covering) it is known \cite{LinP} that
\begin{equation}\label{eqonedP}
\dim \mathcal{C}_{P-\gl_0}( M)=\dim
\mathcal{C}_{P^*-\gl_0}( M)=1.
\end{equation}

\vskip 3mm

4. For other particular cases where the conjecture holds true see \cite{ABJ,CCFI,CMS,CMS1,D,Wong}.
 }
\end{rem}
\begin{rem}\label{6Rem29}{\em
It would be interesting to prove Conjecture~\ref{conjD}
at least under the assumption
\begin{equation}\label{eqoned}
\dim \mathcal{C}_{P-\gl_0}( M)=\dim
\mathcal{C}_{P^*-\gl_0}( M)=1,
\end{equation}
which holds true in the critical case and in many important
subcritical (Liouvillian) cases.  For a probabilistic interpretation of
Conjecture~\ref{conjD}, see \cite{ABJ}.
 }
\end{rem}
\begin{rem}\label{6Rem21}{\em
Let $t_n\to \infty$. By a standard parabolic argument, we may
extract a subsequence $\{t_{n_k}\}$ such that for every $x,y\in
 M$ and $s<0$
\begin{equation}\label{eqsequence}
a(x,y,s):=\lim_{k\to\infty}\frac{k_P^ M(x,y,s+t_{n_k})}{k_P^ M(x_0,y_0,t_{n_k})}
 \end{equation}
 exists. Moreover, $a(\cdot,y,\cdot)\in
\mathcal{H}_P( M\times \mathbb{R}_-)$, and $a(x,\cdot,\cdot)\in
\mathcal{H}_{P^*}( M\times \mathbb{R}_-)$. Note that in the
selfadjoint case, we can extract a subsequence $\{t_{n_k}\}$ such that the limit function $a$ satisfies $a(\cdot,y,\cdot)\in \mathcal{H}_P( M\times \mathbb{R})$ \cite{PDavies}.
 }
 \end{rem}
 \begin{rem}\label{6Rem2} {\em
 Consider the complete (two-dimensional) Riemannian manifold $M$ that is constructed in \cite{PSt}. Then $M$ does not
admit nonconstant positive harmonic functions, $\lambda_0(-\Gd,M)=0$. Nevertheless, the heat operator does not admit any  $\lambda_ 0$-invariant positive solution. In particular, $M$ is stochastically incomplete (this construction disproves Stroock's conjecture concerning the existence of a $\lambda_ 0$-invariant positive solution).

On the other hand, since $M$ is Liouvillian, Remark~\ref{6Rem82} implies that Conjecture \ref{conjD} holds true on $M$.
Hence, the limit function
\begin{equation}\label{eq_axy}
a(x,y):=\lim_{t\to\infty} \frac{k_{-\Gd}^{M}(x,y,t)}{k_{-\Gd}^{M}(x_0,x_0,t)}
\end{equation}
 (which equals to the
constant function ${\bf 1}$) is not a $\lambda_ 0$-invariant
positive solution.  Compare this with \cite[Theorem~25]{D} and the
discussion therein above Lemma~26. In particular, it follows that the function
$$\gm(x):=\sup \left\{\sqrt{\frac{K_P^M(x, x,t)}{K_P^M(x_0, x_0,t)}}\;\;\Big|\; 1\leq t<\infty\right\}$$
is not slowly increasing in the sense of \cite{D}.
 }
 \end{rem}

Suppose that $P$ is symmetric (and $\gl_o=0$). Using \eqref{eq_ef_exp} and a standard exhaustion argument, it follows \cite{D} that for a fixed $x\in M$, the function $t\mapsto k_P^ M(x,x,t)$ is a nonincreasing log-convex function, and therefore, a
polarization argument implies  that the following strong ratio property holds true.
\begin{equation}\label{eq_xyt+s}
 \lim_{t\to\infty} \frac{k_P^ M(x,y,t+s)}{k_P^ M(x,y,t)}=1
\qquad  \forall x,y\in  M,\; s\in \mathbb{R}.
\end{equation}
In the nonsymmetric case, Corollary~\ref{asser_log} and the parabolic Harnack inequality imply:
\begin{lem}[\cite{PDavies}]\label{assliminfsup}
For every  $x,y\in  M$ and $s\in \mathbb{R}$, we have
that
\begin{equation}\label{eqskeleton11}
\liminf_{t\to\infty}
\frac{k_P^ M(x,y,t+s)}{k_P^ M(x,y,t)}\leq 1\leq
\limsup_{t\to\infty}
\frac{k_P^ M(x,y,t+s)}{k_P^ M(x,y,t)}\,.
\end{equation}
In particular, if $\,\lim_{t\to\infty}
[k_P^ M(x,y,t+s)/k_P^ M(x,y,t)]$ exists, it
equals to $1$.
\end{lem}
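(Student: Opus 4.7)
The plan is to iterate any hypothetical strict deviation of the ratio $\rho(t) := k_P^M(x,y,t+s)/k_P^M(x,y,t)$ from $1$ along the arithmetic sequence $\{T+ns\}_{n \geq 0}$, and then invoke the logarithmic asymptotic from Corollary~\ref{asser_log} to force a contradiction. Throughout, I take $\lambda_0 = 0$ as permitted in this section, fix $x, y \in M$, and note that the parabolic Harnack inequality guarantees $t \mapsto k_P^M(x,y,t)$ is strictly positive on $(0,\infty)$, so $\rho(t)$ is well-defined for all $t > \max(0,-s)$.

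I will handle the case $s > 0$ first. Suppose toward a contradiction that $\limsup_{t\to\infty} \rho(t) < 1$. Choose $c \in (0,1)$ and $T > 0$ with $k_P^M(x,y,t+s) \leq c\, k_P^M(x,y,t)$ for every $t \geq T$. Iterating this bound $n$ times starting at $t = T$ yields
\[
k_P^M(x,y,T+ns) \leq c^{n} \, k_P^M(x,y,T),
\]
so after taking logarithms and dividing by $T+ns$,
\[
\frac{\log k_P^M(x,y,T+ns)}{T+ns} \;\leq\; \frac{n \log c + \log k_P^M(x,y,T)}{T+ns} \;\xrightarrow[n\to\infty]{}\; \frac{\log c}{s} \;<\; 0.
\]
This contradicts Corollary~\ref{asser_log}, which forces the limit to be $-\lambda_0 = 0$. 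The parallel iteration with a constant $c' > 1$ and the reverse inequality analogously rules out $\liminf_{t\to\infty}\rho(t) > 1$.

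For $s < 0$, I would reduce to the case just handled by writing $\rho(t) = [\tilde\rho(t+s)]^{-1}$, where $\tilde\rho(u) := k_P^M(x,y, u + (-s))/k_P^M(x,y,u)$ and $-s > 0$. As $t \to \infty$ we have $t + s \to \infty$, so passing to $\limsup$ and $\liminf$ inverts and interchanges the two inequalities; the $s > 0$ result applied to $\tilde\rho$ then yields $\liminf\rho \leq 1 \leq \limsup \rho$. The final assertion that the limit, if it exists, equals $1$ is immediate once both inequalities are in place.

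I do not anticipate a real obstacle here: the argument is elementary once one has both the one-sided logarithmic asymptotic of the heat kernel and the ability to iterate. The only subtle point is that Corollary~\ref{asser_log} gives convergence as $t \to \infty$ through a continuum, which is automatically inherited along the discrete subsequence $\{T + ns\}$, making the last step of the iteration argument clean.
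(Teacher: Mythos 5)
Your proof is correct and uses exactly the two ingredients the paper indicates (Corollary~\ref{asser_log} and the parabolic Harnack inequality, the latter only to ensure strict positivity of $k_P^M(x,y,\cdot)$), via the standard iteration along the arithmetic progression $\{T+ns\}$; the reduction of $s<0$ to $s>0$ through $\rho(t)=[\tilde\rho(t+s)]^{-1}$ is also handled properly, including the degenerate possibilities for $\liminf\tilde\rho$. This is essentially the argument the paper has in mind.
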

\begin{rem}\label{remellHar} {\em
If there exist $x_0,y_0\in  M$ and $0<s_0<1$ such that
\begin{equation}\label{eqconjDw17}
M(x_0,y_0,s_0):=\limsup_{t\to\infty}\frac{k_P^ M(x_0,y_0,t+s_0)}{k_P^ M(x_0,y_0,t)}<\infty,
\end{equation}
then by the parabolic Harnack inequality, for all $x,y,z,w\in
K\subset\subset  M$, $t>1$, we have the following Harnack
inequality of elliptic type:
\begin{multline}\label{eqstronghara}
k_P^ M(z,w,t)\leq
C_1k_P^ M(x_0,y_0,t + \frac{s_0}{2})  \leq\\
C_2k_P^ M(x_0,y_0,t - \frac{s_0}{2}) \leq
C_3k_P^ M(x,y,t).
\end{multline}
Similarly, (\ref{eqconjDw17}) implies that for all $x,y\in
 M$ and $r\in \mathbb{R}$:
\begin{align*}
0<m(x,y,r):=\liminf_{t\to\infty}
\frac{k_P^ M(x,y,t+r)}{k_P^ M(x_0,y_0,t)}
\leq \nonumber\\
\limsup_{t\to\infty}\frac{k_P^ M(x,y,t+r)}{k_P^ M(x_0,y_0,t)}
=M(x,y,r)<\infty. \end{align*}
Note that \eqref{eqconjDw17} is obviously satisfied in the symmetric case, and consequently  \eqref{eqstronghara} holds true \cite[Theorem~10]{Dheat}.
 }
 \end{rem}
It turns out that the validity of the strong limit property \eqref{eq_xyt+s} (in the nonsymmetric case) implies the validity of Davies' conjecture if in addition (\ref{eqoned}) is satisfied. We have:
\begin{lem}[\cite{PDavies}]\label{lem6}
 (a) The following assertions are equivalent:

\vskip 3mm

(i) For each $x,y\in M$ there exists a sequence $s_j\to
0$ of negative numbers such that for all $j\geq 1$, and $s=s_j$,
we have
\begin{equation}\label{eqconjDw27}
\lim_{t\to\infty}\frac{k_P^M(x,y,t+s)}{k_P^M(x,y,t)}=1.
\end{equation}

\vskip 3mm

(ii) The ratio limit in (\ref{eqconjDw27}) exists for any $x,y\in
M$ and $s\in \mathbb{R}$.

\vskip 3mm

(iii) Any limit function  $u(x,y,s)$ of the quotients
$\frac{k_P^M(x,y,t_n+s)}{k_P^M(x_0,x_0,t_n)}$
with $t_n \to  \infty$  does not depend on $s$ and has the form
$u(x,y)$, where  $u(\cdot,y) \in  \mathcal{C}_{P}(M)$
for every $y \in  M$ and $u(x, \cdot)  \in
\mathcal{C}_{P^*}(M)$ for every $x \in  M$.

\vskip 3mm

(b) If one assumes further that (\ref{eqoned}) is satisfied, then
Conjecture~\ref{conjD} holds true.

\vskip 3mm

Moreover, Conjecture~\ref{conjD} holds true if $M\subsetneqq \mathbb{R}^d$ is a smooth unbounded domain,  $P$ and $P^*$ are (up to the boundary) smooth operators, \eqref{eqconjDw27} holds true, and
\begin{equation}\label{eqoned0}
\dim \mathcal{C}^0_{P}( M)=\dim
\mathcal{C}^0_{P^*}( M)=1,
\end{equation}
where $\mathcal{C}^0_{P}( M)$ denotes the cone of all functions in
$\mathcal{C}_{P}( M)$ that vanish on $\partial  M$.

\end{lem}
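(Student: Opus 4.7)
The plan is to close the implications $(\text{ii})\Rightarrow(\text{i})$, $(\text{iii})\Rightarrow(\text{ii})$, $(\text{i})\Rightarrow(\text{iii})$ in part (a), and then derive (b) together with the ``moreover'' clause from (iii) via the one-dimensionality hypotheses. The easy directions go first: $(\text{ii})\Rightarrow(\text{i})$ is immediate upon taking any negative sequence $s_j\to 0$. For $(\text{iii})\Rightarrow(\text{ii})$, given an arbitrary $t_n\to\infty$, Lemma~\ref{lem_Harnack}, estimate \eqref{eqHar1}, and standard parabolic regularity let us extract a subsequence along which $k_P^M(x,y,t_{n_k}+s)/k_P^M(x_0,x_0,t_{n_k})$ converges locally uniformly to some $u(x,y,s)$; (iii) makes $u$ independent of $s$, and dividing the shifted quotient by the unshifted one yields $k_P^M(x,y,t_{n_k}+s)/k_P^M(x,y,t_{n_k})\to 1$. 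Every subsequential limit being $1$, the full limit is $1$.

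The core is $(\text{i})\Rightarrow(\text{iii})$. Let $u(x,y,s)=\lim_k k_P^M(x,y,t_{n_k}+s)/k_P^M(x_0,x_0,t_{n_k})$ be any subsequential limit; it is defined on $M\times M\times\R$, smooth, and positive by parabolic Harnack. Factoring
\begin{equation*}
\frac{k_P^M(x,y,t_{n_k}+s_j)}{k_P^M(x_0,x_0,t_{n_k})}=\frac{k_P^M(x,y,t_{n_k}+s_j)}{k_P^M(x,y,t_{n_k})}\cdot\frac{k_P^M(x,y,t_{n_k})}{k_P^M(x_0,x_0,t_{n_k})}
\end{equation*}
and sending $k\to\infty$, assumption (i) forces $u(x,y,s_j)=u(x,y,0)$ for every $j$. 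Fix $x,y$: the map $s\mapsto u(x,y,s)$ solves $\partial_s u+P_x u=0$ with $s$-independent coefficients, hence is real-analytic in $s$ by classical Kotake--Narasimhan-type time-analyticity of parabolic solutions. A real-analytic function that agrees with the constant $u(x,y,0)$ on a sequence $s_j\to 0$ must itself be constant, so $u(x,y,s)\equiv u(x,y,0)=:u(x,y)$. Since $\partial_s u\equiv 0$ we get $P_x u(\cdot,y)=0$, so $u(\cdot,y)\in\mathcal{C}_P(M)$; the symmetric argument applied to the equation $\partial_s u+P^*_y u=0$ satisfied in the $y$-variable gives $u(x,\cdot)\in\mathcal{C}_{P^*}(M)$, establishing (iii).

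Part (b) then follows: \eqref{eqoned} forces $u(\cdot,y)=c_1(y)\varphi$ and $u(x,\cdot)=c_2(x)\varphi^*$, and matching yields $u(x,y)=C\varphi(x)\varphi^*(y)$ with $C=1/\big(\varphi(x_0)\varphi^*(x_0)\big)$ fixed by the normalization at $(x_0,x_0)$. Every subsequential limit equalling this single function, $\lim_{t\to\infty}k_P^M(x,y,t)/k_P^M(x_0,x_0,t)$ exists and is positive, which is Conjecture~\ref{conjD}. For the ``moreover'' clause, when $M\subsetneqq\R^d$ is a smooth unbounded domain with $P,P^*$ smooth up to $\partial M$, Schauder boundary estimates pass the Dirichlet boundary condition $k_P^M(\cdot,y,t)=0$ on $\partial M$ to the limit, so $u(\cdot,y)\in\mathcal{C}^0_P(M)$ and $u(x,\cdot)\in\mathcal{C}^0_{P^*}(M)$; \eqref{eqoned0} then plays the role of \eqref{eqoned} and the same matching argument closes.

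The main obstacle is justifying the real-analyticity in time of $s\mapsto u(x,y,s)$ when the coefficients of $P$ are only H\"older continuous. The Kotake--Narasimhan approach proceeds via iterated Schauder estimates on $\partial_s^n u=(-P)^n u$ and works cleanly for $C^\infty$ coefficients; extending it to the H\"older setting requires some care. Should analyticity be unavailable, a backup is to consider $W(x,s):=u(x,y,s+s_j)-u(x,y,s)$, observe $LW=0$ with $W(\cdot,0)\equiv 0$ and $|W|\le 2c(y)\varphi(x)$, and derive $W\equiv 0$ from Cauchy uniqueness in this growth class---which, after a Doob $\varphi$-transform, reduces to $L^\infty$ Cauchy uniqueness for $P^\varphi$ and thus holds whenever $\varphi$ is an invariant positive solution (automatic in the critical case).
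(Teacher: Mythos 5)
Your implications $(\text{ii})\Rightarrow(\text{i})$ and $(\text{iii})\Rightarrow(\text{ii})$ are fine and essentially match the paper's treatment of those directions. The gap is entirely in $(\text{i})\Rightarrow(\text{iii})$.

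First, you correctly flag that time-analyticity of solutions is dubious for merely H\"older coefficients. But there is a second, more fundamental problem: even if $s\mapsto u(x,y,s)$ were real-analytic on the open interval $(-\infty,0)$, the sequence $s_j$ accumulates at the \emph{boundary} point $s=0$, not at an interior point of that interval, so the identity theorem does not apply. The continuous function $f(s)=s\sin(1/s)$ on $(-\infty,0]$ (with $f(0)=0$) is real-analytic on $(-\infty,0)$ and vanishes on a sequence $s_j\to 0^-$ without being constant. Your ``backup'' via Cauchy uniqueness has the same directional defect (it gives $W\equiv 0$ only forward in $s$) and, as you note, also needs $\varphi$ to be an invariant solution, which you cannot assume in the subcritical case covered by the lemma.

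What you have overlooked is that the same factoring already gives you something much stronger than $u(x,y,s_j)=u(x,y,0)$: replace $t_{n_k}$ by $t_{n_k}+s$ in your displayed identity,
\begin{equation*}
\frac{k_P^M(x,y,t_{n_k}+s+s_j)}{k_P^M(x_0,x_0,t_{n_k})}
=\frac{k_P^M(x,y,t_{n_k}+s+s_j)}{k_P^M(x,y,t_{n_k}+s)}\cdot\frac{k_P^M(x,y,t_{n_k}+s)}{k_P^M(x_0,x_0,t_{n_k})},
\end{equation*}
and send $k\to\infty$. Assumption (i) (applied along the diverging sequence $t_{n_k}+s$) makes the first factor tend to $1$, so $u(x,y,s+s_j)=u(x,y,s)$ for every $s<0$. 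Thus $u(x,y,\cdot)$ is a continuous function on $\mathbb{R}_-$ with a sequence of periods $-s_j\to 0$, and a continuous function with arbitrarily small periods is constant. No analyticity and no uniqueness theorem are needed. This is exactly the paper's route (which it runs as $(\text{i})\Rightarrow(\text{ii})\Rightarrow(\text{iii})$). A minor secondary point: in the non-selfadjoint setting, Remark~\ref{6Rem21} guarantees the subsequential limit only on $M\times M\times\mathbb{R}_-$, not all of $\mathbb{R}$ as you assert; this does not affect the periodicity argument but your phrasing should be adjusted. Your treatment of (b) and the ``moreover'' clause is correct and agrees with the paper.
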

 \begin{proof} (a) By Lemma~\ref{assliminfsup}, if the limit in
(\ref{eqconjDw27}) exists, then it is $1$.

\vskip 3mm

(i) $\Rightarrow$ (ii). Fix $x_0,y_0\in M$, and take
$s_0<0$ for which  the limit (\ref{eqconjDw27}) exists. It follows
that any limit solution $u(x,y,s)\in
\mathcal{H}_P(M\times \mathbb{R}_-)$ of a sequence
$\big\{\frac{k_P^M(x,y,t_n+s)}{k_P^M(x_0,y_0,t_n)}\big\}$
with $t_n\to \infty$  satisfies
$u(x_0,y_0,s + s_0) = u(x_0,y_0,s)$ for all $s < 0$. So,
$u(x_0,y_0,\cdot)$ is $s_0$-periodic. It follow from our
assumption and the continuity of $u$ that $u(x_0,y_0,\cdot)$ is
the constant function. Since this holds for all $x,y \in
M$ and $u$, it follows that (\ref{eqconjDw27}) holds for
any $x,y \in  M$ and $s \in  \mathbb{R}$.

\vskip 3mm

(ii) $\Rightarrow$ (iii). Fix $y\in M$. By Remark
\ref{6Rem21}, any limit function $u$ of the sequence
$\big\{\frac{k_P^M(x,y,t_n+s)}{k_P^M(x_0,x_0,t_n)}\big\}$
 with $t_n\to \infty$ belongs to
$\mathcal{H}_P(M\times \mathbb{R}_-)$. Since
\begin{equation}\label{eqfrac6}
\frac{k_P^M(x,y,t+s)}{k_P^M(x_0,x_0,t)}=
\frac{k_P^M(x,y,t)}{k_P^M(x_0,x_0,t)}
\frac{k_P^M(x,y,t+s)}{k_P^M(x,y,t)}\,,
\end{equation}
(\ref{eqconjDw27}) implies that such a $u$ does not depend on $s$.
Therefore, $u=u(x,y)$, where $u(\cdot,y)\in
\mathcal{C}_{P}(M)$ and $u(x, \cdot) \in
\mathcal{C}_{P^*}(M)$.

\vskip 3mm

(iii) $\Rightarrow$ (i). Write
\begin{equation}\label{eqquot67}
\frac{k_P^M(x,y,t+s)}{k_P^M(x,y,t)}=
\frac{k_P^M(x,y,t+s)}{k_P^M(x_0,x_0,t)}\,
\frac{k_P^M(x_0,x_0,t)}{k_P^M(x,y,t)}\,.
\end{equation}
Let $t_n\to \infty$ be a sequence such that the sequence
$\big\{\frac{k_P^M(x,y,t_n+s)}{k_P^M(x_0,x_0,t_n)}\big\}$
converges to a solution in $\mathcal{H}_P(M\times
\mathbb{R}_-)$. By our assumption, we have
$$\lim_{n\to\infty}\frac{k_P^M(x,y,t_n+s)}{k_P^M(x_0,x_0,t_n)}
=\lim_{n\to\infty}\frac{k_P^M(x,y,t_n)}{k_P^M(x_0,x_0,t_n)}=u(x,y)>0,$$
which together with (\ref{eqquot67}) implies (\ref{eqconjDw27})
for all $s\in \mathbb{R}$.

\vskip 3mm

\noindent (b) The uniqueness and (iii) imply that
$\frac{k_P^M(x,y,t+s)}{k_P^M(x_0,x_0,t)}\rightarrow
\frac{u(x)u^*(y)}{u(x_0)u^*(x_0)}$, where $u \in
\mathcal{C}_{P}(M)$ and $u^* \in
\mathcal{C}_{P^*}(M)$, and Conjecture~\ref{conjD} holds.
 \end{proof}
\begin{remark}\label{rem23}{\em
Assume that one of the assumptions of part (a) of Lemma~\ref{lem6} is satisfied. Then by the lemma,
any limit function of the sequence $\big\{\frac{k_P^M(x,y,t_n+s)}{k_P^M(x_0,x_0,t_n)}\big\}$ is of the form $a(x,y)$, where for every $y \in  M$, the function  $a(\cdot,y) \in  \mathcal{C}_{P}(M)$, and $a(x, \cdot)  \in
\mathcal{C}_{P^*}(M)$ for every $x \in  M$. But in general,
 $a(x,y)$ does not need to be a {\em
product} of solutions of the equations $Pu=0$ and $P^*u=0$, as is
demonstrated in \cite{CMS1}, in the hyperbolic space, and in
\cite[Example~4.2]{PDavies}.
 }
 \end{remark}

In the null-critical case we have:
\begin{lem}[\cite{PDavies}]\label{lem63}
Suppose that $P$ is null-critical, and for each $x,y\in
 M$ there exists a sequence  $\{s_j\}$  of negative
numbers such that $s_j \to 0$, and
\begin{equation}\label{eqconjDw71}
\liminf
_{t\to\infty}\frac{k_P^ M(x,y,t+s)}{k_P^ M(x,y,t)}\geq
1
\end{equation}
for $s=s_j$, $j=1,2,\ldots\,$. Then Conjecture~\ref{conjD} holds
true.
\end{lem}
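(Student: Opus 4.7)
The plan is to combine the hypothesis with Lemma~\ref{assliminfsup} to extract a limit function of the normalized heat kernel along a subsequence, to show that the condition ``$\liminf\ge 1$'' forces this limit to be non-increasing in the time variable, and then to invoke the criticality of $P$ to conclude that the limit is independent of time and of separated-product form, which yields Conjecture~\ref{conjD} in the null-critical case.

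Fix $x_0\in M$ and a sequence $t_n\to\infty$. Using the parabolic Harnack inequality (Lemma~\ref{lem_Harnack}) applied in each spatial variable, together with the fact that $k_P^M(x_0,x_0,t)$ and $k_P^M(x_0,x_0,t')$ are mutually comparable up to a constant whenever $|t-t'|$ is bounded, one shows that the family $u_n(x,y,r):=k_P^M(x,y,t_n+r)/k_P^M(x_0,x_0,t_n)$ is locally bounded above on $M\times M\times\R$. Parabolic regularity and a diagonal extraction argument then produce a subsequence (still denoted $\{t_n\}$) along which $u_n\to u$ locally uniformly, where $u\ge 0$ is parabolic in $(x,r)$ with respect to $L=\partial_t+P$, parabolic in $(y,r)$ with respect to $\partial_t+P^*$, and satisfies $u(x_0,x_0,0)=1$.

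The main step is to show that $u$ does not depend on $r$. For each fixed $(x,y)$ the hypothesis provides a sequence $s_j=s_j^{(x,y)}\to 0^-$ with $\liminf_{t\to\infty}k_P^M(x,y,t+s_j)/k_P^M(x,y,t)\ge 1$. Applying this inequality at $t=t_n+r$ (which still tends to $\infty$) and passing to the limit along the subsequence gives $u(x,y,r+s_j)\ge u(x,y,r)$ for every $r\in\R$ and every $s_j$ in that sequence (the degenerate case $u(x,y,r)=0$ being trivial, since both sides are nonnegative). Iterating to integer multiples $ks_j$ and using the continuity of $u$ together with $s_j\to 0^-$, I conclude that $r\mapsto u(x,y,r)$ is non-increasing on $\R$ for each $(x,y)$. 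Forward parabolic Harnack starting from $u(x_0,x_0,0)=1$ then yields $u>0$ on $M\times M\times(0,\infty)$, and the non-increasing property propagates positivity backward in $r$, so $u>0$ everywhere on $M\times M\times\R$.

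With $u$ positive and non-increasing in $r$, the parabolic equation $\partial_r u=-P_x u$ combined with $\partial_r u\le 0$ yields $P_x u(\cdot,y,r)\ge 0$, so for each fixed $(y,r)$ the function $u(\cdot,y,r)$ is a positive supersolution of $Pv=0$ on $M$. Since $P$ is critical in $M$, every positive supersolution belongs to the one-dimensional cone $\mathcal{C}_P(M)$ spanned by the ground state $\varphi$; hence $u(\cdot,y,r)=c(y,r)\varphi(\cdot)$, which in turn forces $P_x u\equiv 0$ and $\partial_r u\equiv 0$. The symmetric argument applied to $u(x,\cdot,r)$ using the criticality of $P^*$ gives $u(x,y,r)=\gamma\varphi(x)\varphi^*(y)$ for a positive constant $\gamma$, and the normalization $u(x_0,x_0,0)=1$ fixes $\gamma=[\varphi(x_0)\varphi^*(x_0)]^{-1}$. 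Since every subsequence of $\{t_n\}$ produces the same explicit limit, the full limit $\lim_{t\to\infty}k_P^M(x,y,t)/k_P^M(x_0,x_0,t)=\varphi(x)\varphi^*(y)/[\varphi(x_0)\varphi^*(x_0)]$ exists and is positive, proving Conjecture~\ref{conjD}. The hard part will be the transition from ``$\liminf\ge 1$ at the sampled shifts $s_j$'' to full monotonicity of $u$ on $\R$ together with the accompanying global positivity of $u$ on $M\times M\times\R$: without both, the one-dimensionality of $\mathcal{C}_P(M)$ cannot be invoked uniformly in $(y,r)$ and the supersolution argument collapses.
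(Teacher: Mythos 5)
Your proof follows essentially the same route as the paper: extract a normalized limit $u$ along a subsequence $t_n\to\infty$, use the $\liminf$ hypothesis to show $u$ is non-increasing in the time parameter, deduce that each slice $u(\cdot,y,s)$ is a positive supersolution of $Pv=0$, and invoke criticality (together with the dual argument for $P^*$) to force $u=\varphi(x)\varphi^*(y)/[\varphi(x_0)\varphi^*(x_0)]$. Two remarks on the differences. First, the paper works on $M\times M\times \mathbb{R}_-$ rather than on $M\times M\times\mathbb{R}$ (see Remark~\ref{6Rem21}): in the non-symmetric setting your claim that $k_P^M(x_0,x_0,t)$ and $k_P^M(x_0,x_0,t')$ are \emph{mutually} comparable for bounded $|t-t'|$ is not justified, since parabolic Harnack only controls an earlier value by a later one, so the local boundedness of $u_n(x,y,r)$ for $r>0$ is not automatic; restricting throughout to $r<0$ (where \eqref{eqHar1} and forward Harnack give the required uniform bounds) avoids this and leaves the rest of your argument intact. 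Second, your passage from the sampled inequality $u(x,y,r+s_j)\ge u(x,y,r)$ to full monotonicity by iterating to multiples $ks_j$ and using density is correct but somewhat roundabout; since $u$ is smooth in $r$ by parabolic regularity and $s_j<0$, $s_j\to 0$, the difference quotients $(u(x,y,r+s_j)-u(x,y,r))/s_j\le 0$ give $u_s\le 0$ directly, which is exactly what the supersolution step needs. Your explicit verification of the global strict positivity of $u$ (forward Harnack from the normalization point combined with monotonicity) is a useful elaboration of a point the paper leaves implicit.
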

 \begin{proof} Let $u(x,y,s)$ be a limit function of a sequence
$\big\{\frac{k_P^\mathcal{M}(x,y,t_n+s)}{k_P^\mathcal{M}(x_0,x_0,t_n)}\big\}$
with $t_n\to \infty$ and $s<0$.  By our assumption,
 $u(x,y,s+s_j)\geq u(x,y,s)$, and therefore,
$u_s(x,y,s)\leq 0$ for all $s<0$. Thus, $u(\cdot,y,s)$ (respect.,
$u(x,\cdot,s)$) is a positive supersolution of the equation $Pu=0$
(respect., $P^*u=0$) in $\mathcal{M}$. Since $P$ is critical, it
follows that $u(\cdot,y,s)\in \mathcal{C}_{P}(\mathcal{M})$
(respect., $u(x,\cdot,s)\in \mathcal{C}_{P^*}(\mathcal{M})$), and
hence $u_s(x,y,s)=0$. By the uniqueness, $u$ equals to
$\frac{\varphi(x)\varphi^*(y)}{\varphi(x_0)\varphi^*(x_0)}$, and
Conjecture~\ref{conjD} holds true.
 \end{proof}

 The large time behavior
of quotients of the heat kernel is obviously closely related to
the parabolic Martin boundary (for the parabolic Martin boundary
theory see \cite{CC,Doob,MJFA07}). The next result relates
Conjecture~\ref{conjD} and the parabolic Martin compactification
of $\mathcal{H}_P( M\times \mathbb{R}_-)$.
\begin{thm}[\cite{PDavies}]\label{thmconjD}
Assume that \eqref{eqconjDw17} holds true for some $x_0,y_0\in
 M$, and $s_0>0$. Then the following assertions are
equivalent:

(i) Conjecture~\ref{conjD} holds true for a fixed $x_0 \in
 M$.

\vskip 3mm

  (ii)
\begin{equation}\label{eqconjD2}
\lim_{t\to\infty}\frac{k_P^ M(x,y,t)}{k_P^ M(x_1,y_1,t)}
\end{equation}
exists, and the limit is positive for every $x,y,x_1,y_1\in
 M$.

\vskip 3mm

  (iii)
\begin{equation}\label{eqconjD1}
\lim_{t\to\infty}\frac{k_P^ M(x,y,t)}{k_P^ M(y,y,t)}
\, , \quad \mbox{and} \quad
\lim_{t\to\infty}\frac{k_P^ M(x,y,t)}{k_P^ M(x,x,t)}
\end{equation}
exist, and these ratio limits are positive for every $x,y\in
 M$.

\vskip 3mm

(iv) For any $y\in  M$ there is a unique nonzero
parabolic Martin boundary point $\bar y$ for the equation  $Lu=0$
in $ M\times \mathbb{R}$ which corresponds to any
sequence of the form $\{(y,-t_n)\}_{n=1}^\infty$ such that $t_n\to
\infty$, and for any $x\in  M$ there is a unique nonzero
parabolic Martin boundary point $\bar x$ for the equation
$u_t+P^*u=0$ in $ M\times \mathbb{R}$ which corresponds
to any sequence of the form $\{(x,-t_n)\}_{n=1}^\infty$ such that
$t_n\to \infty$.

\vskip 3mm

Moreover, if Conjecture~\ref{conjD} holds true, then for any fixed
$y\in  M$ (respect.,  $x \in  M$), the limit
function $a(\cdot,y)$ (respect., $a(x,\cdot)$) is a positive
solution of the equation $Pu = 0$ (respect., $P^*u = 0$).
Furthermore, the Martin functions of part (iv) are time
independent, and (\ref{eqconjDw27}) holds for all $x,y \in
 M$ and $s \in \mathbb{R}$.
\end{thm}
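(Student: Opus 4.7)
The plan is to establish the chain (ii) $\Leftrightarrow$ (i) $\Leftrightarrow$ (iii) by elementary ratio algebra, to derive (iv) and the ``Moreover'' assertions from (i) by analyzing subsequential limits of the normalized parabolic kernels, and to close the loop with the immediate implication (iv) $\Rightarrow$ (i). Throughout, the standing hypothesis \eqref{eqconjDw17} and the resulting elliptic-type Harnack inequality \eqref{eqstronghara} furnish both the boundedness needed to extract subsequential limits and the positivity of those limits.

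For the equivalences among (i)--(iii): (ii) $\Rightarrow$ (i) and (ii) $\Rightarrow$ (iii) are trivial (set $x_1=y_1=x_0$, respectively $(x_1,y_1)=(y,y)$ or $(x,x)$). For (i) $\Rightarrow$ (ii), write
\begin{equation*}
\frac{k_P^M(x,y,t)}{k_P^M(x_1,y_1,t)}=\frac{k_P^M(x,y,t)/k_P^M(x_0,x_0,t)}{k_P^M(x_1,y_1,t)/k_P^M(x_0,x_0,t)}\longrightarrow \frac{a(x,y)}{a(x_1,y_1)}>0.
\end{equation*}
For (iii) $\Rightarrow$ (i), I apply (iii) with $y=x_0$ to obtain
\begin{equation*}
\frac{k_P^M(x,x,t)}{k_P^M(x_0,x_0,t)}=\frac{k_P^M(x,x_0,t)/k_P^M(x_0,x_0,t)}{k_P^M(x,x_0,t)/k_P^M(x,x,t)}\longrightarrow \text{positive limit},
\end{equation*}
and then multiply by the second limit of (iii) at a generic $(x,y)$ to recover (i).

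Now assume (i), fix $t_n\to\infty$, and set $u_n(x,y,s):=k_P^M(x,y,t_n+s)/k_P^M(x_0,x_0,t_n)$. By \eqref{eqstronghara} the family $\{u_n\}$ is locally bounded on $M\times M\times\mathbb{R}$, so by Remark \ref{6Rem21} and standard parabolic regularity I may pass to a subsequence with $u_n\to u$ locally uniformly, where $u(\cdot,y,\cdot)\in\mathcal{H}_P(M\times\mathbb{R}_-)$ and $u(x,\cdot,\cdot)\in\mathcal{H}_{P^*}(M\times\mathbb{R}_-)$. Applying (i) at the time $t_n+s\to\infty$ gives $u_n(x,y,s)/\gamma_n(s)\to a(x,y)$, with $\gamma_n(s):=k_P^M(x_0,x_0,t_n+s)/k_P^M(x_0,x_0,t_n)$. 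After a further diagonal extraction $\gamma_n(s)\to\gamma(s)$, so $u(x,y,s)=a(x,y)\gamma(s)$ with $\gamma(0)=1$. Plugging this product form into $\partial_s u+P_x u=0$ yields $\gamma'(s)/\gamma(s)=-P_x a(x,y)/a(x,y)$; since the left side depends only on $s$ and the right only on $(x,y)$, there is $\lambda_1\in\mathbb{R}$ with $P_x a(\cdot,y)=\lambda_1 a(\cdot,y)$ and $\gamma(s)=\mathrm{e}^{-\lambda_1 s}$. Positivity of $a$ together with $\lambda_0(P,M)=0$ forces $\lambda_1\le 0$.

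The crucial rigidity is that $\lambda_1$ is determined by the fixed function $a$ (via the eigenfunction equation), hence does not depend on the subsequence; thus every subsequence of $\gamma_n(s)$ converges to the same value, so the full limit $\lim_{t\to\infty}k_P^M(x_0,x_0,t+s)/k_P^M(x_0,x_0,t)=\mathrm{e}^{-\lambda_1 s}$ actually exists, and Lemma \ref{assliminfsup} forces this limit to equal $1$, yielding $\lambda_1=0$. Consequently $\gamma\equiv 1$, $u(x,y,s)=a(x,y)$ is time-independent, $a(\cdot,y)\in\mathcal{C}_P(M)$, and (using the adjoint equation $\partial_s u+P^*_y u=0$) $a(x,\cdot)\in\mathcal{C}_{P^*}(M)$; then \eqref{eqconjDw27} follows from Lemma \ref{lem6}(a). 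In the parabolic Martin compactification on $M\times\mathbb{R}$ this says that for every $y\in M$ the kernels $K^L((x,s),(y,-t_n))$ converge to the same time-independent function regardless of $\{t_n\}$, giving the unique nonzero Martin boundary point of (iv); the analogous statement for $P^*$ is obtained symmetrically. Finally, (iv) $\Rightarrow$ (i) is immediate: the uniqueness of the Martin boundary point at $\{(y,-t_n)\}$ forces $\lim_n k_P^M(x,y,s+t_n)/k_P^M(x_0,y_0,s_0+t_n)$ to exist, be positive, and be independent of $\{t_n\}$, and the specialization $s=s_0=0$, $y_0=x_0$ is precisely (i). The main obstacle in the plan is the rigidity step $\lambda_1=0$, which depends on coupling the uniqueness of $a$ from (i) with the Harnack two-sided control from \eqref{eqconjDw17} and Lemma \ref{assliminfsup}.
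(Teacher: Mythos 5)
Your plan and most of its execution closely mirror the paper's proof: the equivalences among (i)--(iii) by ratio algebra, the passage to subsequential limits of the normalized kernels, the separation-of-variables reduction to an eigenvalue equation, and the use of Lemma~\ref{assliminfsup} to force the eigenvalue to vanish are all present in the original argument (there the separation of variables is written as $K_P^M(x,\bar y,t)=b(x)f(t)$ with $Pb=\lambda b$, $f'+\lambda f=0$). Your direct (iii)~$\Rightarrow$~(i) by the two-factor identity is a small but legitimate shortcut, whereas the paper only proves (iii)~$\Rightarrow$~(iv)~$\Rightarrow$~(i); otherwise the reasoning for the ``Moreover'' assertions is essentially the same.

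There is, however, a genuine gap in your closing step (iv)~$\Rightarrow$~(i). The uniqueness of the parabolic Martin boundary point $\bar y$ attached to the sequences $\{(y,-t_n)\}$ gives you convergence of the Martin kernels
\[
K^L\bigl((x,s),(y,-t_n)\bigr)=\frac{k_P^M(x,y,s+t_n)}{k_P^M(x_0,y,s_0+t_n)},
\]
in which the normalizing denominator necessarily carries the \emph{same} spatial pole $y$, not an arbitrary $y_0$. So it does \emph{not} immediately yield existence of $\lim_n k_P^M(x,y,s+t_n)/k_P^M(x_0,y_0,s_0+t_n)$ for a different $y_0$, and in particular it does not immediately give $\lim_{t\to\infty} k_P^M(x,y,t)/k_P^M(x_0,x_0,t)$. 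To remove the pole $y$ from the denominator one must bring in the adjoint Martin boundary point $\overline{x_0}$ for $u_t+P^*u=0$ and splice the two convergences together. The paper does this by a three-factor telescoping identity,
\[
\frac{k_P^M(x,y,s+s_0)}{k_P^M(y,y,s)}\cdot
\frac{k_P^M(y,y,s)}{k_P^M(x_0,y,s+2s_0)}\cdot
\frac{k_P^M(x_0,y,s+2s_0)}{k_P^M(x_0,x_0,s+s_0)},
\]
whose first and third factors converge to Martin kernels of $L$ and $L^*$ respectively, while the middle factor is the reciprocal of a Martin kernel of $L$; one also has to choose $s_0>0$ so that $K_P^M(x_0,\bar y,s_0/2)>0$, since a nonnegative ancient solution could a priori vanish for early times. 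So the claim that (iv)~$\Rightarrow$~(i) is ``immediate'' glosses over exactly the part of the argument that requires both halves of hypothesis (iv); as written, your derivation uses only the $L$-Martin point $\bar y$ and is not correct.

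A minor additional point: in the (i)~$\Rightarrow$~(iv) step you invoke Remark~\ref{6Rem21} for limits in $\mathcal{H}_P(M\times\mathbb{R}_-)$ but then differentiate $\gamma$ at $s=0$ and use $s\in\mathbb{R}$. This is fine because the standing hypothesis~\eqref{eqconjDw17} together with Remark~\ref{remellHar} gives two-sided bounds for all shifts $r\in\mathbb{R}$, so the subsequential limits can be taken on all of $M\times M\times\mathbb{R}$; but that justification should be stated explicitly rather than attributed to Remark~\ref{6Rem21}, which only covers $s<0$.
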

  \begin{proof}  (i) $\Rightarrow$ (ii) follows from the identity
$$\frac{k_P^ M(x,y,t)}{k_P^ M(x_1,y_1,t)}=
\frac{k_P^ M(x,y,t)}{k_P^ M(x_0,x_0,t)}\cdot
\left(\frac{k_P^ M(x_1,y_1,t)}{k_P^ M(x_0,x_0,t)}\right)^{-1}.$$

\vskip 3mm

(ii) $\Rightarrow$ (iii). Take $x_1=y_1=y$ and $x_1=y_1=x$,
respectively.

\vskip 3mm

(iii) $\Rightarrow$ (iv). It is well known that the Martin
compactification does not depend on the fixed reference point
$x_0$. So, fix $y\in  M$ and take it also as a reference
point. Let $\{-t_n\}$ be a sequence such that $t_n\to \infty$ and
such that the Martin sequence
$\big\{\frac{k_P^ M(x,y,t+t_n)}{k_P^ M(y,y,t_n)}\big\}$
converges to a Martin function $K_P^ M(x,\bar{y},t)$. By
our assumption, for any $t$ we have
$$\lim_{n\to\infty}\frac{k_P^ M(x,y,t+t_n)}{k_P^ M(y,y,t+t_n)}=
\lim_{\tau\to\infty}\frac{k_P^ M(x,y,\tau)}{k_P^ M(y,y,\tau)}=
b(x)>0,$$ where $b$ does not depend on the sequence $\{-t_n\}$. On
the other hand,
$$\lim_{n\to\infty}\frac{k_P^ M(y,y,t+t_n)}{k_P^ M(y,y,t_n)}=
K_P^ M(y,\bar{y},t)=f(t).$$
 Since
$$\frac{k_P^ M(x,y,t+t_n)}{k_P^ M(y,y,t_n)}=
\frac{k_P^ M(x,y,t+t_n)}{k_P^ M(y,y,t+t_n)}\cdot
\frac{k_P^ M(y,y,t+t_n)}{k_P^ M(y,y,t_n)},$$ we
have
$$K_P^ M(x,\bar{y},t)=b(x)f(t).$$
 By separation of variables, there exists a constant
$\lambda$ such that
$$Pb-\lambda b=0 \quad \mbox{ on }  M,
\qquad f'+\lambda f=0 \quad \mbox{ on } \mathbb{R},\;\; f(0)=1.
$$
 Since $b$ does not depend on the sequence
$\{-t_n\}$, it follows in particular,  that $\lambda$ does not
depend on this sequence. Thus, $ \lim_{\tau\to \infty}
\frac{k_P^ M(y,y,t+\tau)}{k_P^ M(y,y,\tau)}=f(t)=\mathrm{e}^{-\lambda
t}$. Lemma~\ref{assliminfsup} implies that $\lambda=0$.  It
follows that $b$ is a positive solution of the equation $Pu=0$,
and
\begin{equation}\label{6eq1}
K_P^ M(x,\bar{y},t)=\lim_{\tau\to -
\infty}\frac{k_P^ M(x,y,t-\tau)}{k_P^ M(y,y,-\tau)}=
b(x).
\end{equation}
The dual assertion can be proved similarly.

\vskip 3mm

(iv) $\Rightarrow$ (i). Let $K_P^ M(x,\bar{y},t)$ be the
Martin function given in (iv), and $s_0>0$ such that
$K_P^ M(x_0,\bar{y},s_0/2)>0$. Consequently,
$K_P^ M(x,\bar{y},s)>0$  for $s\geq s_0$. Using the
substitution $\tau=s+s_0$ we obtain \begin{multline}      \lim_{\tau\to
\infty}
\frac{k_P^ M(x,y,\tau)}{k_P^ M(x_0,x_0,\tau)} =
\lim_{s\to \infty}  \left \{
\frac{k_P^ M(x,y,s+s_0)}{k_P^ M(y,y,s)}\right.\times\\[3mm]
\left.\frac{k_P^ M(y,y,s)}{k_P^ M(x_0,y,s + 2s_0)}
 \frac{k_P^ M(x_0,y,s + 2s_0)}{k_P^ M(x_0,x_0,s + s_0)}  \right
\} =
  \frac{K_P^ M(x,\bar{y},s_0)K_{P^*}^ M(\overline{x_0},y,s_0)}
{K_P^ M(x_0,\bar{y},2s_0)}.
 \end{multline}
The last assertion of the theorem follows from  (\ref{6eq1}) and
Lemma \ref{lem6}.
 \end{proof}
 Finally we mention a problem which
was raised by Burdzy and Salisbury \cite{BS} for $P=-\Delta$ and
$\mathcal{M}\subset \mathbb{R}^d$.
\begin{question}[\cite{PDavies}]\label{questBS}
Assume that $\lambda_0=0$. Determine which minimal functions (in the sense of Martin's boundary) in $\mathcal{C}_{P}(\mathcal{M})$ are minimal in
$\mathcal{H}_P(\mathcal{M}\times \mathbb{R_-})$. In particular, is it true that in the critical case, the ground state $\vgf$ is minimal in $\mathcal{H}_P(\mathcal{M}\times \mathbb{R_-})$?
\end{question}

\mysection{Comparing decay of critical and
subcritical heat kernels}\label{sect1DMY}
In this section we are concerned with the large time behavior of the
heat kernel~$k_P^M$ with regards to the criticality versus
subcriticality property of the operator~$P$. Since for any fixed
$x,y\in M$, $x \not = y$, we have that $k_{P}^M(x,y,\cdot)\in L^1(\mathbb{R}_+)$
if and only if $P$ is subcritical, it is natural to conjecture
that \emph{under some assumptions} the heat kernel of a subcritical operator~$P_+$~in $M$ decays
(in time) faster than the heat kernel of a critical
operator~$P_0$~in $M$. Hence, our aim is to discuss the following conjecture in {\em general} settings.
\begin{conjecture}[\cite{fkp}]\label{conjMain}
Let $P_+$ and $P_0$ be respectively subcritical and critical
operators in $M$. Then
\begin{equation}\label{eqconjMain}
\lim_{t\to\infty}\frac{k_{P_+}^M(x,y,t)}{k_{P_0}^M(x,y,t)}=0
\end{equation}
locally uniformly in $M\times M$.
\end{conjecture}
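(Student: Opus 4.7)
The plan is a three-tier reduction: handle the easy case where the generalized principal eigenvalues differ, then the positive-critical case of $P_0$, and finally the genuinely hard null-critical case.

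Since $P_0$ is critical in $M$ one has $\lambda_0(P_0,M)=0$, while the standing assumption gives $\lambda_0(P_+,M)\geq 0$. If $\lambda_+:=\lambda_0(P_+,M)>0$, then by Corollary~\ref{asser_log}, for any $\vge\in(0,\lambda_+/2)$ and all sufficiently large $t$,
\[
k_{P_+}^M(x,y,t)\leq e^{-(\lambda_+-\vge)t}\quad\text{and}\quad k_{P_0}^M(x,y,t)\geq e^{-\vge t},
\]
so the ratio decays exponentially and, via the parabolic Harnack inequality, locally uniformly. One may therefore assume $\lambda_0(P_+,M)=\lambda_0(P_0,M)=0$. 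If in this case $P_0$ is positive-critical, Theorem~\ref{thm1}(ii) gives $k_{P_0}^M(x,y,t)\to\vgf(x)\vgf^*(y)/\int_M\vgf\vgf^*>0$ while Theorem~\ref{thm1}(i) gives $k_{P_+}^M(x,y,t)\to 0$; parabolic Schauder estimates promote both to locally uniform convergence and the ratio vanishes on compacta.

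The remaining and substantive case is $\lambda_0(P_+,M)=0$ with $P_0$ null-critical, so both kernels tend to zero by Theorem~\ref{thm1}. The key input is the integrability dichotomy in the definition \eqref{def.critical}:
\[
\int_0^\infty k_{P_+}^M(x,y,t)\dt = G_{P_+}^M(x,y)<\infty,\qquad \int_0^\infty k_{P_0}^M(x,y,t)\dt = \infty.
\]
Suppose $\liminf_{t\to\infty}k_{P_+}^M(x,y,t)/k_{P_0}^M(x,y,t)=:\eta>0$. Then for some $T>0$ and all $t\geq T$, $k_{P_+}^M(x,y,t)\geq(\eta/2)k_{P_0}^M(x,y,t)$, whence $\int_T^\infty k_{P_+}^M(x,y,t)\dt\geq(\eta/2)\int_T^\infty k_{P_0}^M(x,y,t)\dt=\infty$, contradicting subcriticality of $P_+$. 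Thus $\liminf_{t\to\infty} k_{P_+}^M(x,y,t)/k_{P_0}^M(x,y,t)=0$.

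The hard part will be upgrading $\liminf=0$ to a genuine limit, and this is precisely why \eqref{eqconjMain} is stated as a conjecture. One natural route goes through Davies' strong ratio limit property: if Conjecture~\ref{conjD} holds for both $P_+$ and $P_0$ at a common base point $x_0$, then the factorization
\[
\frac{k_{P_+}^M(x,y,t)}{k_{P_0}^M(x,y,t)}=\frac{k_{P_+}^M(x,y,t)/k_{P_+}^M(x_0,x_0,t)}{k_{P_0}^M(x,y,t)/k_{P_0}^M(x_0,x_0,t)}\cdot\frac{k_{P_+}^M(x_0,x_0,t)}{k_{P_0}^M(x_0,x_0,t)}
\]
presents the ratio as a product of a factor with a positive finite limit and the single-point ratio; combined with $\liminf=0$, the full limit is $0$, and local uniformity follows from the same identity and parabolic regularity. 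An alternative route, when $P_0=P_+-V$ with $V$ a semismall perturbation of $P_+^*$ in the sense of Theorem~\ref{thmssp}, is to apply the Duhamel identity
\[
k_{P_0}^M(x,y,t)-k_{P_+}^M(x,y,t)=\int_0^t\!\!\int_M k_{P_0}^M(x,z,t-s)V(z)k_{P_+}^M(z,y,s)\dz\ds
\]
and show that the right-hand side is asymptotically equivalent to $k_{P_0}^M(x,y,t)$, using the ground-state asymptotics $\vgf\asymp G_{P_+}^M(\cdot,y_0)$ in $M_1^*$ from Theorem~\ref{thmssp} together with Theorem~\ref{thm1}(iii). The quantitative estimate needed here is the main obstruction and appears to require additional structural hypotheses on the pair $(P_+,P_0)$.
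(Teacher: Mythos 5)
The statement you are addressing is stated in the paper as a \emph{conjecture}, and the paper does not prove it; instead, Section~7 establishes it only under extra structural hypotheses (Theorems~\ref{mainthmFKP}, \ref{thm_nonselfadj}, \ref{thm_ssp}). Your first two tiers are correct and match exactly the paper's own reduction in Remark~\ref{rem_89}: by Corollary~\ref{asser_log} the case $\lambda_0(P_+,M)>0$ is immediate, and by Theorem~\ref{thm1} the case where $P_0$ is positive-critical is immediate, so one may assume $\lambda_0(P_+,M)=0$ and $P_0$ null-critical. Your observation that the integrability dichotomy \eqref{def.critical} forces $\liminf_{t\to\infty}k_{P_+}^M(x,y,t)/k_{P_0}^M(x,y,t)=0$ is a genuine, valid observation that the paper does not make explicit; it is a pleasant addition, though it falls strictly short of the conjecture.

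The gap is in your first route. You assert that Davies' conjecture for both kernels, combined with $\liminf=0$, upgrades the liminf to a full limit. This implication is false. What Davies' conjecture buys you (together with Lemma~\ref{lem6}) is that the ratio $R(t):=k_{P_+}^M(x_0,x_0,t)/k_{P_0}^M(x_0,x_0,t)$ satisfies the strong ratio limit property $R(t+s)/R(t)\to 1$ for every fixed $s$, and that the ratio at general $(x,y)$ is asymptotic to a positive multiple of $R(t)$. But a positive function can satisfy $R(t+s)/R(t)\to 1$ for every $s$ while oscillating with $\liminf R=0$ and $\limsup R>0$ (e.g.\ $R(t)=\exp\bigl(-\tfrac12 (1+\sin\log\log t)\log t\bigr)$). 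Thus nothing you have proved rules out $\limsup R>0$. The paper's Theorems~\ref{thm_nonselfadj} and~\ref{thm_ssp} close exactly this gap by assuming, in addition to Davies' conjecture, either the comparison $P_+=P_0+V$ with $V\geq 0$ (whence the generalized maximum principle gives the uniform bound $k_{P_+}^M\leq k_{P_0}^M$ and a supersolution argument against the criticality of $P_0$), or the semismallness of $V$ plus the boundedness condition \eqref{Ass1m}; in both cases they argue by contradiction from $\limsup>0$ along a subsequence, using the limit solutions $u_+\in\mathcal{C}_{P_+}(M)$, $u_0\in\mathcal{C}_{P_0}(M)$ produced by Davies' conjecture and the pointwise domination to contradict criticality or subcriticality. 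Your second route (Duhamel plus the ground-state asymptotics of Theorem~\ref{thmssp}) is closer in spirit to Section~\ref{secequiv} and Theorem~\ref{thmcond1}, which feed condition \eqref{Ass1m} into Theorem~\ref{thm_ssp}; you correctly flag that a quantitative estimate is required there and that additional structure on $(P_+,P_0)$ is the obstruction.
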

Conjecture~\ref{conjMain} was stimulated by the following conjecture of D.~Kr\-ej\v{c}\-i\v{r}\'{\i}k and E.~Zuazua \cite{KZ}:

{\em  Let $P_+$ and $P_0$ be, respectively, {\em selfadjoint} subcritical and critical operators defined on $L^2(M,\mathrm{d}x)$. Then
\begin{equation}\label{KZqconj}
\lim_{t\to\infty}\frac{\|\mathrm{e}^{-P_+t}\|_{L^2(M,W\,\mathrm{d}x)
\to L^2(M,\mathrm{d}x)}}
{\|\mathrm{e}^{-P_0t}\|_{L^2(M,W\,\mathrm{d}x)\to L^2(M,\mathrm{d}x)}} =0
\end{equation}
for some positive weight function $W$}.

\vskip 3mm
\begin{remark}\label{rem_89}{\em
Theorem~\ref{thm1} implies that Conjecture~\ref{conjMain}
obviously holds true if~$P_0$ is positive-critical. Moreover, by Corollary~\ref{asser_log} the conjecture also holds true if $\gl_0(P_+,M)>0$. Therefore, throughout this section we assume that $\gl_0(P_+,M)=0$, and $P_0$ is null-critical in $M$.
 }
 \end{remark}

\begin{example}\label{ex_M}{\em
In \cite{M_large_time} M.~Murata
obtained the exact asymptotic for the heat kernels of
nonnegative Schr\"odinger operators with {\em short-range} (real)
potentials defined on~$\mathbb{R}^d$, $d\geq 1$.
In particular, \cite[theorems~4.2 and 4.4]{M_large_time} imply that Conjecture~\ref{conjMain}
holds true for such operators.
 }
\end{example}
The following theorem deals with the {\em symmetric} case.
\begin{theorem}[\cite{fkp}]\label{mainthmFKP}
Let the subcritical operator~$P_+$
and the critical operator~$P_0$ be symmetric in $M$ of the form
\begin{equation}\label{PSthm}
Pu=-m^{-1} \mathrm{div}(mA\nabla u) +Vu .
\end{equation}

Assume that $A_+$ and $A_0$, the sections on $M$ of $\mathrm{End}(TM)$, and the weights $m_+$ and $m_0$, corresponding to  $P_+$ and $P_0$, respectively, satisfy the following matrix inequality
\begin{equation}\label{A+leqA0}
m_+(x)A_+(x)\leq C m_0(x)A_0(x) \qquad  \mbox{for a.e. } x\in M,
\end{equation}
where $C$ is a positive constant. Assume further that for some fixed $y_1\in M$ there exists a positive constant $C$ satisfying the following condition: for each $x\in M$ there
exists $T(x)>0$ such that
\begin{equation}\label{Ass1m}
    k_{P_+}^M(x,y_1,t)\leq C k_{P_0}^M(x,y_1,t)\qquad \forall t>T(x).
\end{equation}
Then
\begin{equation}\label{eqconjMain1}
\lim_{t\to\infty}\frac{k_{P_+}^M(x,y,t)}{k_{P_0}^M(x,y,t)}=0
\end{equation}
locally uniformly in $M\times M$.
 \end{theorem}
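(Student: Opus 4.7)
The plan is to argue by contradiction and invoke the Liouville comparison theorem (Theorem~\ref{mainthmLt}): if \eqref{eqconjMain1} fails, I shall construct a positive weak supersolution $\psi$ of $P_+ w = 0$ dominated by the ground state $\varphi_0$ of $P_0$, and Theorem~\ref{mainthmLt} will then force $P_+$ to be critical in $M$, contradicting its subcriticality. Suppose \eqref{eqconjMain1} fails. Since $P_0$ is symmetric and critical, $\dim\mathcal{C}_{P_0}(M)=1$ and Davies' strong ratio property holds for $P_0$ (Remark~\ref{6Rem82}(2)); iterating the parabolic Harnack inequality along a path joining the offending point to $y_1$ and using this strong ratio property, I may assume there exist $x_0 \in M$, a constant $\alpha > 0$, and a sequence $t_n \to \infty$ with
$$\frac{k_{P_+}^M(x_0, y_1, t_n)}{k_{P_0}^M(x_0, y_1, t_n)} \ge \alpha.$$
Consider the renormalized kernels
$$u_n(x, t) := \frac{k_{P_+}^M(x, y_1, t+t_n)}{k_{P_0}^M(x_0, y_1, t_n)}, \qquad v_n(x, t) := \frac{k_{P_0}^M(x, y_1, t+t_n)}{k_{P_0}^M(x_0, y_1, t_n)}.$$
Parabolic regularity and Harnack produce, along a subsequence, locally uniform limits $u \in \mathcal{H}_{P_+}(M \times \mathbb{R})$ and $v \in \mathcal{H}_{P_0}(M \times \mathbb{R})$, with $u(x_0, 0) \ge \alpha$.

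The symmetric strong ratio limit for the critical $P_0$, together with \eqref{eq_xyt+s}, forces $v(x, t) \equiv \varphi_0(x)/\varphi_0(x_0)$, which is time-independent. Passing the bound $u_n \le C v_n$ supplied by \eqref{Ass1m} to the limit (the $x$-dependence of $T(x)$ is absorbed by local compactness) yields
$$0 \le u(x, t) \le \tilde C\, \varphi_0(x), \qquad \tilde C := C/\varphi_0(x_0), \quad (x,t)\in M\times\mathbb{R}.$$
From this bounded ancient solution $u$ I would extract an elliptic supersolution $\psi \in H^1_{\mathrm{loc}}(M)$ of $P_+ w = 0$ via the time-Ces\`{a}ro average
$$\psi(x) := \lim_{T \to \infty} \frac{1}{T}\int_0^T u(x, t)\, dt$$
(using a Banach-limit surrogate along a subsequence if the plain limit fails to exist). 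Uniform boundedness of $u$ in $t$ and the identity $P_+ u = -\partial_t u$ force $P_+ \psi = 0$ weakly, while parabolic Harnack near $(x_0, 0)$ gives $\psi(x_0) > 0$, so $\psi \not\equiv 0$ and $\psi \le \tilde C\, \varphi_0$.

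The bound $\psi \le \tilde C\, \varphi_0$ combined with hypothesis \eqref{A+leqA0} yields
$$\psi_+^2(x)\, m_+(x) A_+(x) \;\le\; C''\, \varphi_0^2(x)\, m_0(x) A_0(x), \qquad \text{for a.e. } x \in M,$$
so every hypothesis of Theorem~\ref{mainthmLt} is verified (with its ``$P_0$'' $= P_0$, ``$P_1$'' $= P_+$, ground state $\varphi_0$, and test function $\psi$). Theorem~\ref{mainthmLt} then declares $P_+$ critical in $M$, contradicting subcriticality; hence \eqref{eqconjMain1} must hold. The principal technical obstacle is the extraction step above: both justifying that the time-averaged $\psi$ is a genuine weak supersolution of $P_+$ against arbitrary test functions, and ensuring $\psi \not\equiv 0$ (the averaging could a priori wash out the mass of $u$ concentrated near $t = 0$), require delicate equicontinuity arguments and a careful commutation of the weak action of $P_+$ with the time average. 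Sidestepping these points would demand a parabolic Liouville-type theorem forcing the bounded ancient solution $u$ itself to be $t$-independent, which is not evidently available at the generality assumed in Theorem~\ref{mainthmFKP}.
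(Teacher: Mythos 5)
The route you set out is the paper's route, and you have even put your finger on exactly where it must land — but you stop short of the one ingredient that closes it, and the Ces\`aro workaround you propose in its place is genuinely shaky (you say so yourself). The missing observation is that the strong ratio limit property \eqref{eq_xyt+s} applies to $P_+$, not just to $P_0$: $P_+$ is also symmetric, and after the reduction of Remark~\ref{rem_89} one has $\lambda_0(P_+,M)=0$, so $t\mapsto k_{P_+}^M(x,x,t)$ is log-convex and nonincreasing and the polarization argument gives
$$\lim_{t\to\infty}\frac{k_{P_+}^M(x,y,t+s)}{k_{P_+}^M(x,y,t)}=1 \qquad \forall x,y\in M,\ s\in\mathbb{R}.$$
This is precisely the ``parabolic Liouville-type theorem forcing the bounded ancient solution $u$ itself to be $t$-independent'' that you declare ``not evidently available.'' It \emph{is} available, and it is where the symmetry hypothesis is doing its work. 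Concretely, writing
$$\frac{k_{P_+}^M(x,y_0,t_n)}{k_{P_0}^M(x_0,y_0,t_n)}
= u_n(x,s)\,\frac{k_{P_+}^M(x,y_0,t_n)}{k_{P_+}^M(x,y_0,t_n+s)}$$
and letting $n\to\infty$, the second factor tends to $1$ by \eqref{eq_xyt+s} applied to $P_+$, so the limit $u_+(x,s)$ of $u_n(x,s)$ is independent of $s$ and is a genuine positive elliptic solution $u_+\in\mathcal{C}_{P_+}(M)$. No time-averaging, no Banach limit, no worry about whether $\psi\not\equiv 0$ or whether $P_+$ commutes with the average.

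A second place your sketch is too quick is the passage from hypothesis \eqref{Ass1m} (stated only for the fixed point $y_1$) to a bound $u_+(x)\le \tilde C_0\varphi(x)$. You wave at ``local compactness,'' but the paper's argument is a concrete chain: parabolic Harnack turns $k_{P_+}^M(x,y_1,t)\le C\,k_{P_0}^M(x,y_1,t)$ into
$$C_1^{-1}k_{P_+}^M(x,y_0,t-1)\le k_{P_+}^M(x,y_1,t)\le C\,k_{P_0}^M(x,y_1,t)\le CC_1\,k_{P_0}^M(x,y_0,t+1),$$
and then \eqref{eq_xyt+s} (applied to both $P_+$ and $P_0$) absorbs the $\pm1$ time shifts to yield $k_{P_+}^M(x,y_0,t)\le C_0\,k_{P_0}^M(x,y_0,t)$ for $t>T(x)$. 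Combined with the two strong ratio limits this gives $u_+(x)\le C_0\,u_0(x)=\tilde C_0\,\varphi(x)$, and then \eqref{A+leqA0} and Theorem~\ref{mainthmLt} finish exactly as you anticipated. So the architecture of your proposal is right; the gap is the failure to recognize that symmetry of $P_+$ hands you the time-independence for free via \eqref{eq_xyt+s}, which is what makes the whole argument go through without the averaging device.
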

\begin{proof}
Recall that in light of Remark~\ref{rem_89} we assume that $\lambda_0(P_+,M)= 0$. Suppose to the contrary that for some $x_0,y_0\in M$ there exists a sequence $\{t_n\}$ such that $t_n \to \infty$, and
    \begin{equation}\label{eqconjMain2}
\lim_{n\to\infty}\frac{k_{P_+}^M(x_0,y_0,t_n)}{k_{P_0}^M(x_0,y_0,t_n)}=K>0.
 \end{equation}
Consider the sequence of functions $\{u_n\}_{n=1}^\infty$ defined by
$$ u_n(x,s):=\frac{k_{P_+}^M(x,y_0,t_n+s)}{k_{P_0}^M(x_0,y_0,t_n)}\qquad x\in M,\; s\in \mathbb{R}.$$
We note that
$$ u_n(x,s)=\frac{k_{P_+}^M(x,y_0,t_n+s)}{k_{P_+}^M(x_0,y_0,t_n)}\times \frac{k_{P_+}^M(x_0,y_0,t_n)}{k_{P_0}^M(x_0,y_0,t_n)}\;.$$
Therefore, by assumption \eqref{eqconjMain2} and Remark~\ref{6Rem21} it follows that we may subtract a subsequence which we rename by $\{u_n\}$ such that
$$\lim_{n\to\infty} u_n(x,s)=u_+(x,s),$$
where $u_+\in \mathcal{H}_{P_+}(M\times \mathbb{R})$ and $u_+\gneqq 0$.

On the other hand,
\begin{align*}
v_n(x):=\frac{k_{P_+}^M(x,y_0,t_n)}{k_{P_0}^M(x_0,y_0,t_n)}
=u_n(x,s)\frac{k_{P_+}^M(x,y_0,t_n)}{k_{P_+}^M(x,y_0,t_n+s)}\;.
\end{align*}
By our assumption, $\lambda_0(P_+,M)= 0$ and $P_+$ is symmetric, therefore \eqref{eq_xyt+s} implies that
$$\lim_{n\to\infty}\frac{k_{P_+}^M(x,y_0,t_n)}{k_{P_+}^M(x,y_0,t_n+s)}=1.$$
Therefore,
$$\lim_{n\to\infty}v_n(x)=\lim_{n\to\infty}u_n(x,s)=u_+(x,s),$$
and $u_+$ does not depend on $s$, and hence $u_+$ is a positive solution of the elliptic
equation $P_+u=0$ in $M$ and we have
\begin{equation}\label{eq0}
    \lim_{n\to\infty}\frac{k_{P_+}^M(x,y_0,t_n)}{k_{P_0}^M(x_0,y_0,t_n)}=u_+(x).
\end{equation}
On the other hand, by Remark~\ref{6Rem82} we have
\begin{equation}\label{eq1}
\lim_{n\to\infty}\frac{k_{P_0}^M(x,y_0,t_n)}{k_{P_0}^M(x_0,y_0,t_n)}
= \frac{\varphi(x)}{\varphi(x_0)} =: u_0(x),
\end{equation}
where $\varphi$ is the ground state of $P_0$.

Combining \eqref{eq0} and \eqref{eq1}, we obtain
 \begin{align}\label{eq2}
\lim_{n\to\infty}  \frac{k_{P_+}^M(x,y_0,t_n)}{k_{P_0}^M(x,y_0,t_n)}= \lim_{n\to\infty} \left\{
 \dfrac{\frac{k_{P_+}^M(x,y_0,t_n)}{k_{P_0}^M(x_0,y_0,t_n)}}
 {\frac{k_{P_0}^M(x,y_0,t_n)}{k_{P_0}^M(x_0,y_0,t_n)}}\right\}=\frac{u_+(x)}{u_0(x)}.
 \end{align}
 On the other hand, by assumption \eqref{Ass1m} and the parabolic Harnack inequality
there exists a positive constant~$C_1$
which depends on $P_+$, $P_0$, $y_0$, and $y_1$
such that
\begin{multline}\label{Ass2}
    C_1^{-1}k_{P_+}^M(x,y_0,t-1)\leq k_{P_+}^M(x,y_1,t)\\\leq C k_{P_0}^M(x,y_1,t) \leq
CC_1 k_{P_0}^M(x,y_0,t+1)    \quad \forall x\in M, t>T(x).
\end{multline}
Moreover, by \eqref{eq_xyt+s} we have
\begin{equation}\label{eqconjDw91}
\lim_{t\to\infty}\frac{k_{P_+}^M(x,y_0,t-1)}{k_{P_+}^M(x,y_0,t)}=1, \; \mbox{ and }\; \lim_{t\to\infty}\frac{k_{P_0}^M(x,y_0,t+1)}{k_{P_0}^M(x,y_0,t)}=1
\quad \forall  x\in M.
\end{equation}
Therefore, \eqref{Ass2} and \eqref{eqconjDw91} imply that there exists $C_0>0$ such that
\begin{equation}\label{Ass3}
    k_{P_+}^M(x,y_0,t)\leq C_0 k_{P_0}^M(x,y_0,t)    \qquad \forall x\in M, t>T(x).
\end{equation}
 Consequently, \eqref{eq2} and \eqref{Ass3} imply that
 $$u_+(x)\leq C_0 u_0(x) = \tilde{C}_0 \varphi(x)\qquad \forall x\in M. $$
 Therefore, using \eqref{A+leqA0} we obtain
 \begin{equation}\label{u+leu_0}
(u_+)^2(x) m_+(x)A_+(x)\leq C_2\varphi^2(x) m_0(x)A_0(x)\qquad  \mbox{for a.e. } x\in M,
\end{equation}
where $C_2>0$ is a positive constant. Thus, Theorem~\ref{mainthmLt} implies that $P_+$ is critical in $M$ which is a contradiction.
The last statement of the theorem follows from the parabolic
Harnack inequality and parabolic regularity.
  \end{proof}
By the generalized maximum principle, assumption \eqref{Ass1m} in Theorem~\ref{mainthmFKP} is satisfied with $C=1$ if $P_+=P_0+V$, where  $P_0$ is a critical operator on $M$ and $V$ is any {\em nonnegative} potential. Note that if the potential is in addition nontrivial,
then $P_+$ is indeed subcritical in $M$. Therefore, we have
\begin{corollary}[\cite{fkp}]\label{nonnegpert}
Let $P_0$ be a symmetric operator of the form \eqref{PSthm} which is critical in $M$,
and let $P_+:=P_0+V_1$, where  $V_1$ is a nonzero nonnegative potential.  Then
\begin{equation}\label{eqconjMain5}
\lim_{t\to\infty}\frac{k_{P_+}^M(x,y,t)}{k_{P_0}^M(x,y,t)}=0
\end{equation}
locally uniformly in $M\times M$.
\end{corollary}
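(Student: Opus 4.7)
\begin{proof}[Proof of Corollary~\ref{nonnegpert}]
The strategy is to deduce the corollary as a direct consequence of Theorem~\ref{mainthmFKP}. To do so, I need to verify that the three hypotheses of that theorem are met: $P_+$ is subcritical in $M$, the matrix inequality \eqref{A+leqA0} holds, and the pointwise comparison \eqref{Ass1m} of the heat kernels along the fiber over a reference point $y_1$ is satisfied.

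First, $P_+$ is subcritical in $M$. Indeed, since $P_0\geq 0$ in $M$ and $V_1\geq 0$, we clearly have $\lambda_0(P_+,M)\geq 0$. If, for contradiction, $P_+$ were critical in $M$, then letting $\varphi$ denote the ground state of $P_0$, we would have $P_+\varphi = P_0\varphi + V_1\varphi = V_1\varphi$, which is a nonzero nonnegative function by the assumption that $V_1\gneqq 0$. Hence $\varphi$ would be a positive strict supersolution (not a solution) of the equation $P_+u=0$ in $M$, contradicting the well-known fact, recalled in Section~\ref{sec1}, that any positive supersolution of a critical operator is a solution. Thus $P_+$ is subcritical.

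Next, since $P_+$ and $P_0$ share the same matrix of principal coefficients $A_0$ and the same weight $m_0$, the matrix inequality \eqref{A+leqA0} holds trivially with $C=1$. Finally, the monotonicity of the heat kernel with respect to the potential, recorded in part~(4) of Lemma~\ref{lem_hkp}, yields
\begin{equation*}
k_{P_+}^M(x,y,t)=k_{P_0+V_1}^M(x,y,t)\leq k_{P_0}^M(x,y,t)\qquad \forall x,y\in M,\; t>0,
\end{equation*}
so \eqref{Ass1m} is satisfied at any fixed reference point $y_1\in M$ with $C=1$ and $T(x)=0$. Invoking Theorem~\ref{mainthmFKP} gives the desired conclusion \eqref{eqconjMain5}, locally uniformly in $M\times M$.
\end{proof}

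The only place where anything beyond a mechanical check is required is the verification that $P_+$ is subcritical; here the ``ground state as strict supersolution'' argument is the standard one from criticality theory, and there is no real obstacle to be overcome since the remaining hypotheses of Theorem~\ref{mainthmFKP} are immediate from the form of $P_+=P_0+V_1$ (identical second-order and weight data) together with the monotonicity property in Lemma~\ref{lem_hkp}(4).
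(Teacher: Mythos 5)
Your proof is correct and follows essentially the same route as the paper: verify the hypotheses of Theorem~\ref{mainthmFKP} (subcriticality of $P_+$, the trivial matrix inequality \eqref{A+leqA0} since $P_+$ and $P_0$ share the same principal part, and \eqref{Ass1m} with $C=1$ via the generalized maximum principle / Lemma~\ref{lem_hkp}(4)) and conclude. The only difference is cosmetic: where the paper merely asserts that $P_+=P_0+V_1$ is subcritical for a nontrivial nonnegative $V_1$, you supply the short standard argument that the ground state $\varphi$ of the critical operator $P_0$ becomes a strict positive supersolution of $P_+u=0$, which is incompatible with criticality of $P_+$.
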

Next, we discuss the nonsymmetric case. We study two cases where Davies' conjecture implies Conjecture~\ref{conjMain}. First, we show that in the nonsymmetric case, the result of Corollary~\ref{nonnegpert} for positive perturbations of a critical operator~$P_0$ still holds provided that the validity of Davies' conjecture (Conjecture~\ref{conjD})
is assumed instead of the symmetry hypothesis. More precisely, we have
\begin{theorem}[\cite{fkp}]\label{thm_nonselfadj}
Let $P_0$ be a critical operator in $M$, and let $P_+=P_0+V$, where~$V$ is any nonzero nonnegative potential on $M$. Assume that
Davies' conjecture (Conjecture~\ref{conjD}) holds true for both $k_{P_0}^M$ and $k_{P_+}^M$. Then
\begin{equation}\label{eqconjMain6}
\lim_{t\to\infty}\frac{k_{P_+}^M(x,y,t)}{k_{P_0}^M(x,y,t)}=0
\end{equation}
locally uniformly in $M\times M$.
 \end{theorem}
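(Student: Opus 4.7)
My plan is to follow the template of the proof of Theorem~\ref{mainthmFKP}, with the role of the log-convexity ratio property \eqref{eqconjDw91} (available in the symmetric case) played by its analog extracted from Davies' conjecture via Theorem~\ref{thmconjD}. As a preliminary reduction, I would apply the Doob $h$-transform with $h=\varphi$, the ground state of $P_0$. This preserves the quotient $k_{P_+}^M/k_{P_0}^M$, preserves the validity of Davies' conjecture for both kernels, and preserves the decomposition $P_+^\varphi=P_0^\varphi+V$ with the same nontrivial $V\ge 0$. Thus I may assume $P_0\mathbf{1}=0$, so that the ground state of $P_0$ is $\mathbf{1}$.

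Suppose for contradiction that $k_{P_+}^M(x_0,y_0,t_n)/k_{P_0}^M(x_0,y_0,t_n)\to K>0$ along some sequence $t_n\to\infty$. Set
\[U_n(x,s):=\frac{k_{P_+}^M(x,y_0,t_n+s)}{k_{P_0}^M(x_0,y_0,t_n)},\qquad V_n(x,s):=\frac{k_{P_0}^M(x,y_0,t_n+s)}{k_{P_0}^M(x_0,y_0,t_n)},\]
which are parabolic solutions of the equations with operators $\partial_s+P_+$ and $\partial_s+P_0$, respectively. By \eqref{eqHar1} and the parabolic Harnack inequality these sequences are locally bounded on $M\times\mathbb{R}$, so along a further subsequence $U_n\to U\in\mathcal{H}_{P_+}(M\times\mathbb{R})$ and $V_n\to V\in\mathcal{H}_{P_0}(M\times\mathbb{R})$ locally uniformly, with $U(x_0,0)=K$ and $V(x_0,0)=1$. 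The crux is to show that both $U$ and $V$ are $s$-independent: invoking Davies' conjecture together with Theorem~\ref{thmconjD} yields the strong ratio property $\lim_{t\to\infty} k_P^M(x,y,t+s)/k_P^M(x,y,t)=1$ for both $P_+$ and $P_0$, whence $U_n(x,s)/U_n(x,0)\to 1$ and likewise for $V_n$. Therefore $U=U(x)\in\mathcal{C}_{P_+}(M)$ and $V=V(x)\in\mathcal{C}_{P_0}(M)$; since $P_0$ is critical with ground state $\mathbf{1}$, uniqueness forces $V\equiv 1$, and monotonicity $k_{P_+}^M\le k_{P_0}^M$ from Lemma~\ref{lem_hkp} gives $0<U\le 1$.

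To close the contradiction, set $w:=1-U\ge 0$. From $P_+U=0$ and $P_+=P_0+V$ one computes $P_0 U=-VU$, hence $P_0 w=P_0(1)-P_0 U=VU\ge 0$, so $w$ is a nonnegative supersolution of $P_0u=0$. If $w\equiv 0$ then $U\equiv 1$, giving $P_+(1)=V\equiv 0$ and contradicting $V\not\equiv 0$; otherwise the strong maximum principle forces $w>0$ on $M$, and since $P_0$ is critical every positive supersolution of $P_0u=0$ is a solution, so $VU=P_0 w\equiv 0$, and the positivity of $U$ again yields $V\equiv 0$. The pointwise convergence for arbitrary $x,y\in M$ follows by the same argument applied to any reference pair, and local uniform convergence on $M\times M$ is a routine consequence of parabolic Harnack. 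The main technical obstacle is the $s$-independence step: one must verify the hypothesis \eqref{eqconjDw17} of Theorem~\ref{thmconjD} for the subcritical operator $P_+$, which is not assumed Liouvillian. This should follow from Davies' conjecture via the semigroup identity combined with dominated convergence against the adjoint Davies limit, but the argument is delicate precisely because $\dim\mathcal{C}_{P_+}(M)$ need not equal $1$.
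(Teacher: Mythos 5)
Your core idea is the right one: use the generalized maximum principle to compare the two Davies limits and then invoke criticality of $P_0$. In fact, your closing argument with $w:=\mathbf{1}-U$ (after the $h$-transform) is identical, modulo notation, to the paper's argument with $v:=u_0-Ku_+$. But the route you take to get there is both longer and harder than the paper's, and the extra length is precisely where your acknowledged gap lives.

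The paper's proof works entirely at the level of fixed-time elliptic ratios. Setting $v_\pm(x,t):=k_{P_\pm}^M(x,y_0,t)/k_{P_\pm}^M(x_0,y_0,t)$, it reads off from the assumed Davies' conjecture that $v_+(\cdot,t)\to u_+\in\mathcal{C}_{P_+}(M)$ and $v_0(\cdot,t)\to u_0\in\mathcal{C}_{P_0}(M)$, then multiplies by the quotient at the reference point and passes to the limit along $t_n$ using the gmp bound $k_{P_+}^M/k_{P_0}^M\le 1$ to obtain $Ku_+\le u_0$. There is no time shift $s$, no passage to $\mathcal{H}_P(M\times\mathbb{R})$, no invocation of Theorem~\ref{thmconjD}, and no need for the ratio property \eqref{eqconjDw17}. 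Your version introduces the shifted quotients $U_n(x,s)$, which forces you to establish $s$-independence of the limit --- and that is exactly the place where you correctly observe you cannot verify hypothesis \eqref{eqconjDw17} for a general nonsymmetric subcritical $P_+$. You are importing the template of Theorem~\ref{mainthmFKP}, but that theorem uses the parabolic-limit machinery \emph{because} it does not assume Davies' conjecture; it instead derives the strong ratio property from log-convexity of $t\mapsto k_P^M(x,x,t)$ in the symmetric setting. When Davies' conjecture is an explicit hypothesis, as here, that detour is unnecessary: you should compare ratios at the same time $t_n$ and only ever take the elliptic limits.

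One further remark on your ``main technical obstacle.'' The worry you raise --- whether Davies' conjecture alone forces the limit $u_+$ to lie in $\mathcal{C}_{P_+}(M)$ rather than merely being an eigenfunction $P_+u_+=cu_+$ for some $c\le 0$ --- is genuinely latent in the paper's proof as well, which states $u_+\in\mathcal{C}_{P_+}(M)$ ``by our assumption'' without comment. In the paper's framework this is taken to be part of what it means for Davies' conjecture to hold (cf.\ the ``Moreover'' clause of Theorem~\ref{thmconjD}, and \cite[\S 4]{PDavies}). Your instinct to try to \emph{derive} it via the strong ratio limit is sound in principle, but it sends you down the harder path; the paper simply builds it into the hypothesis. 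If you accept that convention, your argument closes correctly; if you insist on the literal minimal reading of Conjecture~\ref{conjD}, then the gap is real but it is shared with the source. Either way, drop the $h$-transform (harmless but superfluous, since $k_{P_+}^M/k_{P_0}^M$ is unchanged by it anyway) and the time-shifted sequences, and argue directly with $v_\pm(\cdot,t_n)$ and the gmp as the paper does.
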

\begin{proof}
 Recall that in light of Remark~\ref{rem_89} we assume that $\lambda_0(P_+,M)= 0$. Suppose to the contrary that for some $x_0,y_0\in M$ there exists a sequence $\{t_n\}$ such that $t_n \to \infty$ and
    \begin{equation}\label{eqconjMain2n}
\lim_{n\to\infty}\frac{k_{P_+}^M(x_0,y_0,t_n)}{k_{P_0}^M(x_0,y_0,t_n)}=K>0.
 \end{equation}
Consider the functions $v_+$ and  $v_0$ defined by
$$ v_+(x,t):=\frac{k_{P_+}^M(x,y_0,t)}{k_{P_+}^M(x_0,y_0,t)}\,,\quad v_0(x,t):=\frac{k_{P_0}^M(x,y_0,t)}{k_{P_0}^M(x_0,y_0,t)} \qquad x\in M, t>0.$$
By our assumption,
$$\lim_{t\to\infty} v_+(x,t)=u_+(x),\qquad \lim_{t\to\infty}
v_0(x,t)=u_0(x),$$
where $u_+\in \mathcal{C}_{P_+}(M)$ and $u_0\in \mathcal{C}_{P_0}(M)$.

On the other hand, by the generalized maximum principle
\begin{equation}\label{gmp}
\frac{k_{P_+}^M(x,y_0,t)}{k_{P_0}^M(x,y_0,t)}\leq 1.
\end{equation}
Therefore,
\begin{equation}\label{eqm1}
\frac{k_{P_+}^M(x_0,y_0,t_n)}{k_{P_0}^M(x_0,y_0,t_n)}\times
\dfrac{\frac{k_{P_+}^M(x,y_0,t_n)}{k_{P_+}^M(x_0,y_0,t_n)}}
{\frac{k_{P_0}^M(x,y_0,t_n)}{k_{P_0}^M(x_0,y_0,t_n)}} =
  \frac{k_{P_+}^M(x,y_0,t_n)}{k_{P_0}^M(x,y_0,t_n)}\leq 1.
\end{equation}
Letting $n\to \infty$ we obtain
\begin{equation}\label{eqm2}
Ku_+(x) \leq u_0(x) \qquad x\in M.
\end{equation}
It follows that $v(x):=u_0(x)-Ku_+(x)$ is a nonnegative supersolution of the  equation $P_0 u=0$ in $M$ which is not a solution. In particular, $v\neq 0$. By the strong maximum principle $v(x)$ is a strictly positive supersolution of the  equation $P_0 u=0$ in $M$ which is not a solution. This contradicts the criticality of $P_0$ in $M$.
\end{proof}
The second nonsymmetric result concerns semismall perturbations.
\begin{theorem}[\cite{fkp}]\label{thm_ssp}
Let $P_+$ and $P_0=P_++V$ be a subcritical operator and a critical operator in $M$, respectively. Suppose that $V$ is a semismall perturbation of the operator $P_+^*$ in
$M$. Assume further that Davies' conjecture (Conjecture~\ref{conjD}) holds
true for both $k_{P_0}^M$ and $k_{P_+}^M$ and that \eqref{Ass1m}
holds true. Then
\begin{equation}\label{eqconjMain6a}
\lim_{t\to\infty}\frac{k_{P_+}^M(x,y,t)}{k_{P_0}^M(x,y,t)}=0
\end{equation}
locally uniformly in $M\times M$.
 \end{theorem}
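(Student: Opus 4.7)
The plan mirrors the blueprint of Theorems~\ref{mainthmFKP} and~\ref{thm_nonselfadj}. Arguing by contradiction, suppose there exist $x_0,y_0\in M$ and a sequence $t_n\to\infty$ with
$$K:=\lim_{n\to\infty}\frac{k_{P_+}^M(x_0,y_0,t_n)}{k_{P_0}^M(x_0,y_0,t_n)}>0.$$
Since Davies' conjecture is assumed for both $k_{P_+}^M$ and $k_{P_0}^M$, the equivalences of Lemma~\ref{lem6} and Theorem~\ref{thmconjD} deliver the strong ratio property \eqref{eq_xyt+s} for both kernels. This is precisely the ingredient that makes the Harnack-inequality argument of Theorem~\ref{mainthmFKP} -- which combines \eqref{Ass1m} with \eqref{eq_xyt+s} -- applicable verbatim, producing constants $C_0$ and $T(x)>0$ with
\begin{equation}\label{eq_planAss3}
k_{P_+}^M(x,y_0,t)\le C_0\,k_{P_0}^M(x,y_0,t)\qquad \forall\, t>T(x).
\end{equation}
Dividing \eqref{eq_planAss3} by $k_{P_0}^M(x_0,y_0,t)$, letting $t=t_n\to\infty$, and applying Davies' conjecture to both kernels -- together with the criticality of $P_0$, which identifies $k_{P_0}^M(x,y_0,t)/k_{P_0}^M(x_0,y_0,t)\to \varphi(x)/\varphi(x_0)$ for the ground state $\varphi$ of $P_0$ -- yields
$$K u_+(x)\le C_0\,\frac{\varphi(x)}{\varphi(x_0)},\qquad\text{hence}\qquad u_+\le C\varphi\quad\text{on } M,$$
where $u_+\in\mathcal{C}_{P_+}(M)$ is the Davies limit of $k_{P_+}^M(\cdot,y_0,t_n)/k_{P_+}^M(x_0,y_0,t_n)$.

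The semismall hypothesis now enters decisively through Theorem~\ref{thmssp}. Because $V$ is semismall of $P_+^*$ and $P_0=P_++V$ is critical (so $\alpha_0=1$ in the notation of Theorem~\ref{thmssp}), applying Theorem~\ref{thmssp} with $\alpha=0$ gives $\varphi\asymp G_{P_+}^M(\cdot,y_0)$ on $M_1^*$, and hence
\begin{equation}\label{eq_planGbd}
u_+(x)\le C'\,G_{P_+}^M(x,y_0)\qquad\text{for all } x\in M_1^*.
\end{equation}
The strategy is now to argue that \eqref{eq_planGbd} forces the positive solution $u_+$ of $P_+u=0$ in $M$ to be of minimal growth in a neighborhood of infinity in $M$, contradicting the subcriticality of $P_+$ (since by the Agmon--Pinsky criticality criterion, only critical operators admit a positive solution in $M$ of minimal growth at infinity).

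To verify the minimal-growth property, fix $k$ with $M_1\cup\{y_0\}\subset M_k$, and let $w_\infty$ be the increasing limit as $n\to\infty$ of the Dirichlet solutions of $P_+u=0$ on $M_n\setminus\mathrm{cl}(M_k)$ with data $u_+$ on $\partial M_k$ and $0$ on $\partial M_n$; by construction $w_\infty$ has minimal growth at infinity and $w_\infty\le u_+$ on $M_k^*$. The difference $\psi:=u_+-w_\infty$ is a nonnegative $P_+$-solution in $M_k^*$ that vanishes on $\partial M_k$ and still satisfies $\psi\le C'G_{P_+}^M(\cdot,y_0)$ on $M_k^*$. Comparing $\psi$ with the $P_+$-Dirichlet solution $\tilde G_n$ on $M_n\setminus\mathrm{cl}(M_k)$ carrying data $0$ on $\partial M_k$ and $G_{P_+}^M(\cdot,y_0)$ on $\partial M_n$ yields $\psi\le C'\tilde G_n$ there. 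Writing $\tilde G_n=G_{P_+}^M(\cdot,y_0)-h_n$, where $h_n$ carries data $G_{P_+}^M(\cdot,y_0)$ on $\partial M_k$ and $0$ on $\partial M_n$, and invoking the minimal growth of $G_{P_+}^M(\cdot,y_0)$ itself to conclude $h_n\nearrow G_{P_+}^M(\cdot,y_0)$ on $M_k^*$, one obtains $\tilde G_n\searrow 0$ locally uniformly on $M_k^*$, so $\psi\equiv 0$ and $u_+$ agrees with $w_\infty$ on $M_k^*$, hence has minimal growth at infinity in $M$. This verification -- where the Green-function comparability furnished by the semismall hypothesis is decisively used to force a minimal-growth conclusion -- is the main obstacle, since the direct supersolution trick from the proof of Theorem~\ref{thm_nonselfadj} is unavailable when $V$ is not signed. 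The locally uniform convergence in \eqref{eqconjMain6a} then follows, as in the preceding theorems, from the parabolic Harnack inequality together with parabolic regularity.
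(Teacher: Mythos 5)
Your proposal follows the same contradiction structure as the paper's proof and arrives at the same punchline: the semismall hypothesis, via Theorem~\ref{thmssp}, forces the Davies limit $u_+\in\mathcal{C}_{P_+}(M)$ to satisfy $u_+\le C\,G_{P_+}^M(\cdot,y_0)$ in a neighborhood of infinity, hence $u_+$ has minimal growth and would be a ground state of the subcritical operator $P_+$, a contradiction. Your detailed verification of this minimal-growth step is sound (the paper dispatches it with one sentence as a standard fact of criticality theory), and your observation that the supersolution trick of Theorem~\ref{thm_nonselfadj} is unavailable for unsigned $V$ is exactly the reason the semismallness hypothesis enters. Where you diverge from the paper is in how the bound $Ku_+\le C\,u_0$ is manufactured. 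You route through the strong time-ratio property \eqref{eq_xyt+s} and then transplant the Harnack chain of Theorem~\ref{mainthmFKP} to produce $k_{P_+}^M(x,y_0,t)\le C_0\,k_{P_0}^M(x,y_0,t)$ for large $t$. The paper instead avoids time-shifted ratios altogether: starting from \eqref{Ass1m} it factors $k_{P_+}^M(x,y_0,t)/k_{P_0}^M(x,y_0,t)$ into the $y_1$-ratio (bounded by $C$) and two \emph{same-time} spatial correction factors $k^M(x,y_0,t)/k^M(x,y_1,t)$, whose Davies limits are ratios of a positive $P^*$-solution at the two fixed points $y_0,y_1$ and hence are controlled, uniformly in $x$, by the \emph{elliptic} Harnack constant. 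You should note, however, that the implication ``Davies' conjecture $\Rightarrow$ \eqref{eq_xyt+s}'' that you extract from Theorem~\ref{thmconjD} carries that theorem's standing hypothesis \eqref{eqconjDw17}, which is not among the assumptions of Theorem~\ref{thm_ssp} and which you do not verify. To be fair, the paper's own step, asserting that the Davies limits $u_\pm^*$ lie in $\mathcal{C}_{P_\pm^*}(M)$ so the elliptic Harnack inequality applies with a constant independent of $x$, tacitly rests on the same ``moreover'' clause of Theorem~\ref{thmconjD}; your version simply makes the reliance more visible by demanding the full strength of \eqref{eq_xyt+s} rather than only its same-time consequences.
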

\begin{proof}
 Recall that we (may) assume that $\lambda_0(P_+,M)= 0$.
Assume to the contrary that for some $x_0,y_0\in M$ there exists a
sequence $\{t_n\}$ such that $t_n \to \infty$ and
    \begin{equation}\label{eqconjMain2na}
\lim_{n\to\infty}\frac{k_{P_+}^M(x_0,y_0,t_n)}{k_{P_0}^M(x_0,y_0,t_n)}=K>0.
 \end{equation}
Consider the functions $v_+$ and  $v_0$ defined by
\begin{equation}\label{eqconjMain2nab}
v_+(x,t):=\frac{k_{P_+}^M(x,y_0,t)}{k_{P_+}^M(x_0,y_0,t)}\,,\quad v_0(x,t):=\frac{k_{P_0}^M(x,y_0,t)}{k_{P_0}^M(x_0,y_0,t)} \qquad x\in M, t>0.
\end{equation}
By our assumption,
$$\lim_{t\to\infty} v_+(x,t)=u_+(x),\qquad \lim_{t\to\infty} v_0(x,t)=u_0(x),$$
where $u_+\in \mathcal{C}_{P_+}(M)$ and $u_0\in
\mathcal{C}_{P_0}(M)$.

On the other hand, by assumption~\eqref{Ass1m} we have for $t>T(x)$
\begin{equation}\label{gmpssp}
\frac{k_{P_+}^M(x,y_0,t)}{k_{P_0}^M(x,y_0,t)} = \frac{k_{P_+}^M(x,y_1,t)}{k_{P_0}^M(x,y_1,t)}
\times \dfrac{\frac{k_{P_+}^M(x,y_0,t)}{k_{P_+}^M(x,y_1,t)}}{\frac{k_{P_0}^M(x,y_0,t)}{k_{P_0}^M(x,y_1,t)}} \leq C \frac{k_{P_+}^M(x,y_0,t)}{k_{P_+}^M(x,y_1,t)}\times\frac{k_{P_0}^M(x,y_1,t)}{k_{P_0}^M(x,y_0,t)}
\,.
\end{equation}
By our assumption on Davies' conjecture, we have for a fixed $x$
\begin{equation}\label{gmpssp6}
\lim_{t\to\infty}\frac{k_{P_+}^M(x,y_0,t)}{k_{P_+}^M(x,y_1,t)}= \frac{u_+^*(y_0)}{u_+^*(y_1)}\;,\qquad
\lim_{t\to\infty}\frac{k_{P_0}^M(x,y_1,t)}{k_{P_0}^M(x,y_0,t)}=\frac{u_0^*(y_1)}{u_0^*(y_0)},
\end{equation}
where $u_+^*$ and $u_0^*$ are positive solutions of the equation $P_+^*u=0$ and  $P_0^*u=0$ in $M$, respectively. By the elliptic Harnack inequality there exists a positive constant $C_1$ (depending on $P_+^*,P_0^*, y_0,y_1$ but not on $x$) such that
\begin{equation}\label{gmpssp61}
 \frac{u_+^*(y_0)}{u_+^*(y_1)}\leq C_1,\qquad
\frac{u_0^*(y_1)}{u_0^*(y_0)}\leq C_1.
\end{equation}
Therefore,  \eqref{gmpssp} and \eqref{gmpssp61} imply that
\begin{equation}\label{eqm1ssp9}
  \frac{k_{P_+}^M(x,y_0,t_n)}{k_{P_0}^M(x,y_0,t_n)}\leq 2CC_1^2
\end{equation}
for $n$ sufficiently large (which might depend on $x$).

Therefore,
\begin{equation}\label{eqm1ssp}
\frac{k_{P_+}^M(x_0,y_0,t_n)}{k_{P_0}^M(x_0,y_0,t_n)}\times \dfrac{\frac{k_{P_+}^M(x,y_0,t_n)}{k_{P_+}^M(x_0,y_0,t_n)}}
{\frac{k_{P_0}^M(x,y_0,t_n)}{k_{P_0}^M(x_0,y_0,t_n)}}
=
  \frac{k_{P_+}^M(x,y_0,t_n)}{k_{P_0}^M(x,y_0,t_n)}\leq 2CC_1^2
  \,.
\end{equation}
Letting $n\to \infty$ and using~\eqref{eqconjMain2na} and~\eqref{eqconjMain2nab},
we obtain
\begin{equation}\label{eqm2ssp}
Ku_+(x) \leq 2CC_1^2 u_0(x) \qquad x\in M
\,.
\end{equation}
On the other hand, since $V$ is a semismall perturbation of
$P_+^*$ in $M$, Theorem~\ref{thmssp} implies that $u_0(x)\asymp
G_{P_+}^M(x,y_0)$ in $M\setminus \overline{B(y_0,\delta)}$,
with some positive~$\delta$.
Consequently,
\begin{equation}\label{eqm3ssp}
u_+(x) \leq  C_2G_{P_+}^M(x,y_0)\qquad x\in M \setminus \overline{B(y_0,\delta)}
\end{equation}
for some $C_2>0$. In other words, $u_+$ is a global positive solution of the equation $P_+u=0$ in $M$ which has
minimal growth in a neighborhood of infinity in $M$. Therefore
$u_+$ is a ground state of the equation $P_+u=0$ in $M$, but this
contradicts the subcriticality of $P_+$ in $M$.
\end{proof}
%

\mysection{On the equivalence of heat kernels}\label{secequiv}
In this section we study a general question concerning the equivalence of heat kernels which in turn will give sufficient conditions for the validity of the boundedness condition \eqref{Ass1m} that is assumed in theorems~\ref{mainthmFKP} and \ref{thm_ssp}.

\begin{definition}\label{defequiv}{\em
Let $P_{i},\,i=1,2$, be two elliptic operators  of the form \eqref{P} that are defined on
$M$ and satisfy $\lambda_0(P_i,M) =0$ for $i=1,2 $. We say that the heat kernels
$k_{P_1}^M(x,y,t)$ and $k_{P_2}^M(x,y,t)$ are
{\em equivalent} (respectively, {\em semiequivalent}) if $$k_{P_1}^M \asymp
k_{P_2}^M \qquad   \mbox{ on } M \times M \times (0,\infty)$$
(respectively, $$ k_{P_1}^M (\cdot,y_0,\cdot)\asymp k_{P_2}^M(\cdot,y_0,\cdot) \quad \mbox{ on }
\;M\times (0,\infty)  $$
for some fixed $y_0\in M$).
 }
\end{definition}

There is an intensive literature dealing with (almost optimal) conditions under which two positive (minimal) Green functions are equivalent or semi\-equivalent
(see \cite{a97,Msemismall,P89,P99} and the references therein). On the other hand, sufficient conditions for the equivalence of heat kernels are known only in a few cases (see \cite{LS,MS,Zhang}). In particular, it seems that the answer to the following conjecture is not known.
\begin{conjecture}[\cite{fkp}]\label{conjequival}
Let $P_1$ and $P_2$ be two subcritical operators of the form \eqref{P} that are
defined on a Riemannian manifold $M$ such that $P_1=P_2$ outside a compact set in $M$, and $\lambda_0(P_i,M) =0$ for $i=1,2 $ .  Then
$k_{P_1}^M$ and $k_{P_2}^M$ are equivalent.
\end{conjecture}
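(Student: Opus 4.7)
The plan is to bootstrap from the classical equivalence of Green functions $G_{P_1}^M \asymp G_{P_2}^M$ (which holds here by criticality theory for compactly supported perturbations sharing the same $\gl_0=0$; see \cite{P89,Msemismall}) to the desired pointwise equivalence of heat kernels, using the large-time asymptotics of Theorem~\ref{thm1} and the parabolic Harnack inequality (Lemma~\ref{lem_Harnack}) to stitch together three regimes: the short-time window, the compact region $K \Subset M$ where $P_1 \ne P_2$, and the far-field where the two operators coincide.

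First I would dispatch the short-time and compact regime: for $(x,y)$ in any compact subset of $M\times M$ and $t \in (0,T]$, Aronson-type bounds (Example~\ref{ex_Aronson}) combined with parabolic Harnack yield $k_{P_1}^M\asymp k_{P_2}^M$, the constants depending only on local ellipticity and regularity of the coefficients, which are identical outside $K$. The substantive content is thus large $t$ and/or $(x,y)$ escaping to infinity. For $t \ge 2$, the semigroup identity factors
\begin{equation*}
k_{P_i}^M(x,y,t) = \int_M \!\int_M k_{P_i}^M(x,z,1)\, k_{P_i}^M(z,w,t-2)\, k_{P_i}^M(w,y,1) \dz\, \mathrm{d}w,
\end{equation*}
so the outer factors are comparable across $i=1,2$ and the problem reduces to comparing $k_{P_i}^M(z,w,\tau)$ for $z,w$ in a compact neighborhood of $K$ and $\tau$ large. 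On this reduced problem I would apply Duhamel's identity,
\begin{equation*}
k_{P_1}^M(z,w,\tau) - k_{P_2}^M(z,w,\tau) = \int_0^\tau \!\int_K k_{P_2}^M(z,\zeta,\tau-s)\,[(P_2 - P_1)k_{P_1}^M(\cdot,w,s)](\zeta)\, \mathrm{d}\zeta\, \ds,
\end{equation*}
whose $\zeta$-integration is confined to $K$ after integration by parts. Parabolic Harnack on cylinders $K\times[s-1,s+1]$ replaces $k_{P_i}^M(\zeta,w,s)$ by $k_{P_i}^M(z_0,w,s)$ at a fixed reference point $z_0\in K$, up to multiplicative constants; integrating in $s$ then yields a bound of the form $C\,G_{P_2}^M(z,z_0)\,G_{P_1}^M(z_0,w)$, which is controlled by $G_{P_2}^M(z,w)$ through the Green function equivalence and a standard three-point (elliptic Harnack) inequality.

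The main obstacle, and presumably the reason the conjecture remains open, is twofold. First, $P_1-P_2$ is generally a genuine second-order differential operator rather than a potential, so the Duhamel integrand above involves derivatives of the heat kernels; integration by parts is natural for the divergence form \eqref{div_P}, but one still needs local gradient estimates of the type $|\nabla_\zeta k_{P_1}^M(\zeta,w,s)| \leq C\,k_{P_1}^M(z_0,w,s)$ uniformly for $\zeta\in K$, which requires a careful localized parabolic Schauder or Moser-type argument together with the semigroup identity to shift half a time-unit. Second, and more deeply, the Green function equivalence is a \emph{time-averaged} statement whereas the conjecture demands pointwise equivalence in $(x,y,t)$; extracting a uniform-in-$t$ estimate from the integrated one appears to require a parabolic Martin-boundary comparison at the common infinity of $P_1$ and $P_2$, refining the approaches of \cite{MS,Zhang} to the variable-coefficient, noncompactly perturbed setting. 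Developing such a parabolic boundary Harnack principle at infinity seems to be the essential missing ingredient, and is in my view the true content of the conjecture.
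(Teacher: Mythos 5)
This statement is labeled a \emph{Conjecture} in the paper, and the author explicitly remarks immediately above it that ``it seems that the answer to the following conjecture is not known.'' There is therefore no proof in the paper for you to match; any ``proof'' you produced should have raised an alarm. To your credit, you did not actually claim a proof: you sketched an approach and then honestly identified the obstacles, correctly concluding that the essential ingredient is missing. That self-assessment is accurate, and it is worth being precise about \emph{why} the Duhamel route you outline cannot be pushed through as written. The bound you extract from Duhamel's identity, after parabolic Harnack and the Green-function equivalence, has the form
\begin{equation*}
\bigl|k_{P_1}^M(z,w,\tau) - k_{P_2}^M(z,w,\tau)\bigr| \leq C\, G_{P_2}^M(z,z_0)\, G_{P_1}^M(z_0,w),
\end{equation*}
i.e.\ a $\tau$-\emph{independent} constant on the right. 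Since $P_1,P_2$ are subcritical with $\lambda_0=0$, Theorem~\ref{thm1}(i) gives $k_{P_i}^M(z,w,\tau)\to 0$ as $\tau\to\infty$, so this estimate is vacuous precisely in the regime that matters: it bounds the difference by a constant while the quantities being compared both vanish. This is exactly your ``time-averaged versus pointwise'' objection, and it is the real reason the conjecture is open, not merely a technical inconvenience about gradient estimates (the latter is a genuine but secondary issue, since $P_2-P_1$ is a full second-order operator on $K$ and you cannot integrate by parts twice without boundary terms on $\partial K$).

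Two further notes that would sharpen your discussion. First, the Green-function equivalence you invoke has recently been strengthened: Remark~\ref{rem_green_lambda} records that Devyver and Pinchover proved $G_{P_1-\lambda}^M \asymp G_{P_2-\lambda}^M$ with a constant uniform in $\lambda\leq 0$, which is a \emph{necessary} condition for Conjecture~\ref{conjequival} (integrate the claimed heat-kernel equivalence against $e^{\lambda t}$), but the conjecture asks for the much finer un-integrated comparison. Second, the paper's own partial progress in this direction (Definition~\ref{def3kineq}, Theorem~\ref{thmbounded}) goes through the notion of a $k$-bounded perturbation, i.e.\ a $3\mbox{-}k$ inequality, and that framework applies to \emph{potential} perturbations $V$; there is no result in the paper asserting that a compactly supported \emph{second-order} perturbation is $k$-bounded, which is exactly what Conjecture~\ref{conjequival} would require. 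So your instinct that some new parabolic boundary-Harnack-at-infinity input is needed is consistent with the state of the art as presented in the paper.
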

\begin{remark}\label{rem_green_lambda}{\em
Suppose that $P_1$ and $P_2$ satisfy the assumption of Conjecture~\ref{conjequival}. B.~Devyver and the author proved recently that there exists $C>0$ such that \begin{equation}\label{eq_Gr_eq}
    C^{-1}G_{P_2-\gl}^M(x,y) \leq G_{P_1-\gl}^M(x,y) \leq CG_{P_2-\gl}^M(x,y)\;\; \forall x,y\in M, \mbox{ and } \gl\leq 0.
\end{equation}
Clearly, by \eqref{def.Gr}, the above estimate \eqref{eq_Gr_eq} is a necessary condition for the validity of Conjecture~\ref{conjequival}.
 }
 \end{remark}
It is well known that certain $3\mbox-G$ inequalities imply the
equivalence of Green functions, and the notions of small and
semismall perturbations is based on this fact.
Moreover, small
(respectively, semismall) perturbations are sufficient conditions and
in some sense also necessary conditions for the equivalence
(respectively, semiequivalence) of the Green functions
\cite{Msemismall,P89,P99}. Therefore, it is natural to introduce an analog definition for heat kernels (cf. ~\cite{Zhang}).
\begin{definition}\label{def3kineq}{\em
Let $P$ be a subcritical operator in $M$.
We say that a potential $V$ is a {\em $k$-bounded  perturbation}
(respectively, {\em $k$-semibounded perturbation})
with respect to the heat kernel $k_{P}^M(x,y,t)$ if there exists a positive constant $C$ such that the following $3\mbox-k$ inequality is satisfied:
\begin{equation}\label{eq3kineq}
\int_0^t\int_M k_{P}^M(x,z,t-s)|V(z)|k_{P}^M(z,y,s)\dz\,\mathrm{d}s \leq C k_{P}^M(x,y,t)
\end{equation}
for all $x,y\in M$ (respectively,  for a fixed $y\in M$, and all $x\in M$) and $t>0$.
 }
\end{definition}
The following result shows that the notion of $k$-(semi)boundedness
is naturally related to the (semi)equivalence of heat kernels.
\begin{theorem}[\cite{fkp}]\label{thmbounded}
Let $P$ be a subcritical operator in $M$,
and assume that the potential $V$ is $k$-bounded  perturbation
(respectively, $k$-semibounded perturbation)
with respect to the heat kernel $k_{P}^M(x,y,t)$.
Then there exists $c>0$ such that for any $|\varepsilon|<c$ the heat kernels
$k_{P+\varepsilon V}^M(x,y,t)$ and $k_{P}^M(x,y,t)$ are
equivalent (respectively, semiequivalent).
\end{theorem}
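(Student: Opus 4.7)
The natural approach is the Duhamel/Dyson perturbation expansion. Writing $P_\varepsilon := P + \varepsilon V$, I would start from the Duhamel identity
\begin{equation*}
k_{P_\varepsilon}^M(x,y,t)=k_{P}^M(x,y,t)-\varepsilon\int_0^t\!\!\int_M k_{P}^M(x,z,t-s)V(z)k_{P_\varepsilon}^M(z,y,s)\dz\,\mathrm{d}s,
\end{equation*}
obtained by applying the parabolic resolvent for $L=\pd_t+P$ to the equation $(\pd_t+P+\varepsilon V)k_{P_\varepsilon}^M=0$ with initial datum $\delta_y$. Iterating formally yields the Dyson series
\begin{equation*}
k_{P_\varepsilon}^M(x,y,t)=\sum_{n=0}^\infty (-\varepsilon)^n K_n(x,y,t), \qquad K_0:=k_P^M,
\end{equation*}
where the $n$-fold iterates $K_n$ are defined recursively by
\begin{equation*}
K_{n+1}(x,y,t):=\int_0^t\!\!\int_M k_P^M(x,z,t-s)V(z)K_n(z,y,s)\dz\,\mathrm{d}s.
\end{equation*}

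The key estimate comes straight from the $3\mbox-k$ inequality \eqref{eq3kineq}. By induction on $n$, using the hypothesis $|K_{n-1}(z,y,s)|\le C^{n-1}k_P^M(z,y,s)$ together with \eqref{eq3kineq}, one obtains in the $k$-bounded case
\begin{equation*}
|K_n(x,y,t)|\le C^n k_P^M(x,y,t) \qquad \forall x,y\in M,\ t>0,\ n\ge 0.
\end{equation*}
In the $k$-semibounded case exactly the same induction works, but only for the fixed reference point $y=y_0$, since the hypothesis controls only the integral against $k_P^M(z,y_0,s)$. Consequently, for $|\varepsilon|<1/C$ the Dyson series converges absolutely and term-by-term satisfies
\begin{equation*}
\sum_{n=1}^\infty |\varepsilon|^n|K_n(x,y,t)| \le k_P^M(x,y,t)\,\frac{|\varepsilon|C}{1-|\varepsilon|C}.
\end{equation*}

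Choosing $c:=1/(2C)$, for $|\varepsilon|<c$ the last quantity is at most $k_P^M(x,y,t)$, and subtracting/adding it to $k_P^M$ in the Dyson series yields the two-sided bound
\begin{equation*}
\frac{1-2|\varepsilon|C}{1-|\varepsilon|C}\,k_P^M(x,y,t)\le k_{P_\varepsilon}^M(x,y,t)\le \frac{1}{1-|\varepsilon|C}\,k_P^M(x,y,t),
\end{equation*}
which is exactly the equivalence (respectively, the semiequivalence, when $y=y_0$). The last step is to identify the sum of the Dyson series with $k_{P_\varepsilon}^M$: the partial sums satisfy the truncated Duhamel formula, their absolutely convergent limit $\tilde u$ satisfies the full Duhamel identity, hence $(\pd_t+P+\varepsilon V)\tilde u=0$ with initial datum $\delta_y$; standard parabolic maximum principle arguments (together with an exhaustion by $M_j$ producing Dirichlet heat kernels, to which the same Dyson expansion applies rigorously) then give $\tilde u = k_{P_\varepsilon}^M$.

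\emph{Expected difficulty.} The algebraic part (the induction and the geometric-series estimate) is routine once one has \eqref{eq3kineq}. The main technical obstacle is the justification that the Dyson series genuinely equals the minimal heat kernel $k_{P_\varepsilon}^M$ rather than merely some nonnegative solution: one has to rule out that perturbation by $\varepsilon V$ introduces an extra positive invariant mass, and one must handle the possibility that $V$ is unbounded or changes sign, so the iterated integrals must be shown to be finite and the Fubini exchanges permissible. This is cleanly accomplished by first proving the estimate on bounded exhausting subdomains $M_j$ (where all heat kernels are bounded and all formal manipulations are legal), and then passing to the monotone limit $j\to\infty$, using the bound $|K_n|\le C^n k_P^M$ to obtain uniform control independent of $j$.
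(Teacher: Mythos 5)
Your proof follows essentially the same route as the paper's: iterate Duhamel to obtain the Dyson/Neumann series, bound it geometrically using the $3\mbox{-}k$ inequality \eqref{eq3kineq} by induction, identify the absolutely convergent sum with $k_{P+\varepsilon V}^M$, and read off the two-sided equivalence (the paper even records Duhamel's formula \eqref{Duhamel} and obtains the lower bound from it exactly as you sketch). One incidental but worthwhile refinement in your write-up: your recursion $K_{n+1}(x,y,t)=\int_0^t\int_M k_P^M(x,z,t-s)V(z)K_n(z,y,s)\dz\,\mathrm{d}s$ prepends $k_P^M V$ on the $x$-side, so in the $k$-semibounded case the induction passes only $K_n(\cdot,y_0,\cdot)$ through and closes directly using the hypothesis at the fixed reference point $y_0$, whereas the paper's recursion $k_P^{(j)}(x,y,t)=\int_0^t\int_M k_P^{(j-1)}(x,z,t-s)V(z)k_P^M(z,y,s)\dz\,\mathrm{d}s$ appends $V k_P^M$ on the $y$-side and would need a bound on $k_P^{(j-1)}(x,z,\cdot)$ for arbitrary $z$ that the semibounded hypothesis does not directly furnish (the iterated kernels coincide by associativity, so the result stands, but your formulation is the correct one for carrying out the inductive estimate).
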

\begin{proof}
Consider the iterated kernels
$$
k_{P}^{(j)}(x,y,t):= \left\{
  \begin{array}{ll}
    k_{P}^M(x,y,t) & j=0, \\[4mm]
    \int_0^t  \int_M  k_P^{(j-1)}(x,z,t-s)  V(z)
  k_{P}^M(z,y,s)
  \dz\,\mathrm{d}s &  j\geq 1.
  \end{array}
\right.
$$
Using~\eqref{eq3kineq} and an induction argument, it follows that
\begin{multline*}
  \sum_{j=0}^\infty |\varepsilon|^j|k_{P}^{(j)}(x,y,t)|\\
  \leq
  \left(1+C|\varepsilon|+C^2|\varepsilon|^2+\dots\right) k_{P}^M(x,y,t)
  = \frac{1}{1-C|\varepsilon|} \, k_{P}^M(x,y,t)
\end{multline*}
provided that $|\varepsilon|<C^{-1}$. Consequently, for such $\varepsilon$ the Neumann series
$$\sum_{j=0}^\infty (-\varepsilon)^j k_{P}^{(j)}(x,y,t)$$ converges locally uniformly in $M\times M \times \mathbb{R}_+$ to $k_{P+\varepsilon V}^M(x,y,t)$ which in turn implies that Duhamel's formula
\begin{multline}\label{Duhamel}
  k_{P+\varepsilon V}^M(x,y,t) = k_{P}^M(x,y,t)
  - \\
  \varepsilon    \int_0^t  \int_M   k_{P}^M(x,z,t-s)  V(z)
  k_{P+\varepsilon V}^M(z,y,s)
  \dz\,\mathrm{d}s
\end{multline}
is valid. Moreover, we have
$$k_{P+\varepsilon V}^M(x,y,t) \leq \frac{1}{1-C|\varepsilon|} \, k_{P}^M(x,y,t).$$
The lower bound
$$C_1k_{P}^M(x,y,t) \leq k_{P+\varepsilon V}^M(x,y,t)$$
(for $|\varepsilon|$ small enough) follows from the upper bound using \eqref{Duhamel} and \eqref{eq3kineq}.
\end{proof}
\begin{corollary}[\cite{fkp}]\label{cor7}
Assume that $P$ and $V$ satisfy the conditions of Theorem~\ref{thmbounded}, and suppose
further that $V$ is nonnegative. Then there exists $c>0$ such that for any $\varepsilon>-c $ the heat kernels
$k_{P+\varepsilon V}^M(x,y,t)$ and $k_{P}^M(x,y,t)$ are
equivalent (respectively, semiequivalent).
\end{corollary}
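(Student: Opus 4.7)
My plan is to split the argument into two regimes, using monotonicity and Theorem~\ref{thmbounded} to dispatch most of it, and then to close the remaining gap by a log-convexity argument.

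First, since $V\geq 0$, Lemma~\ref{lem_hkp}(4) immediately gives $k_{P+\varepsilon V}^M\leq k_P^M$ for every $\varepsilon\geq 0$, which is the upper half of the equivalence for all nonnegative $\varepsilon$. With $c:=1/C$, Theorem~\ref{thmbounded} already supplies the two-sided bound on the interval $\varepsilon\in(-c,c)$. Consequently the only new task is a matching lower bound
\(k_{P+\varepsilon V}^M(x,y,t)\geq c(\varepsilon)\,k_P^M(x,y,t)\)
valid for all $\varepsilon\geq c$.

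The plan is to exploit the log-convexity of the map $\varepsilon\mapsto k_{P+\varepsilon V}^M(x,y,t)$ for each fixed $(x,y,t)\in M\times M\times(0,\infty)$. This is the analytic counterpart of the classical fact that Laplace transforms of nonnegative measures are log-convex (equivalently, the cumulant generating function of the Feynman--Kac functional $\int_0^t V(X_s)\ds$ is convex). A direct analytic derivation differentiates the Duhamel identity
\[
-\partial_\varepsilon k_{P+\varepsilon V}^M(x,y,t)=\int_0^t\!\!\int_M k_{P+\varepsilon V}^M(x,z,t-s)\,V(z)\,k_{P+\varepsilon V}^M(z,y,s)\dz\ds
\]
a second time, expresses $\partial_\varepsilon^2 k_{P+\varepsilon V}^M$ as an ordered double-$V$ nested integral, and then verifies the pointwise inequality $k_\varepsilon\cdot\partial_\varepsilon^2 k_\varepsilon\geq(\partial_\varepsilon k_\varepsilon)^2$ by Cauchy--Schwarz against the positive measure on $(0,t)^2\times M^2$ arising from the semigroup factorization. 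Granted this log-convexity, evaluating the right derivative at $\varepsilon=0$ gives, by the $3\mbox{-}k$ inequality~\eqref{eq3kineq},
\[
\partial_\varepsilon\log k_{P+\varepsilon V}^M(x,y,t)\big|_{\varepsilon=0}=-\frac{\int_0^t\!\!\int_M k_P^M(x,z,t-s)V(z)k_P^M(z,y,s)\dz\ds}{k_P^M(x,y,t)}\geq -C,
\]
so convexity together with this one-sided derivative bound yields
\[
k_{P+\varepsilon V}^M(x,y,t)\geq \mathrm{e}^{-C\varepsilon}\,k_P^M(x,y,t)\qquad\forall\,\varepsilon\geq 0,
\]
which is the lower bound needed (with $\varepsilon$-dependent constant $\mathrm{e}^{-C\varepsilon}$). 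The semiequivalent case is identical, run with $y=y_0$ fixed throughout and invoking the $y_0$-localized form of \eqref{eq3kineq}.

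The hard part will be the rigorous verification of the Cauchy--Schwarz step underlying log-convexity in our general, possibly non-symmetric setting: one must check that the natural positive measure coming from the factorization $k_\varepsilon(x,y,t)=\iint k_\varepsilon(x,z_1,s_1)k_\varepsilon(z_1,z_2,s_2-s_1)k_\varepsilon(z_2,y,t-s_2)\,\mathrm{d}z_1\,\mathrm{d}z_2$ (valid for any $0<s_1<s_2<t$) allows one to dominate the squared first derivative by the product of $k_\varepsilon$ with the second derivative. Iterative alternatives based on Theorem~\ref{thmbounded} alone appear to fail because the $k$-bound constant of $V$ with respect to $k_{P+\varepsilon_0 V}^M$ degrades geometrically as $\varepsilon_0$ increases, producing only a bounded range $[0,1/C)$; the log-convexity route avoids that obstruction entirely.
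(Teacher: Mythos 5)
Your overall plan coincides with the paper's: the generalized maximum principle gives $k_{P+\varepsilon V}^M\leq k_P^M$ for $\varepsilon\geq 0$, Theorem~\ref{thmbounded} handles the band $|\varepsilon|<c$, and log-convexity of $\varepsilon\mapsto k_{P+\varepsilon V}^M(x,y,t)$ supplies the missing lower bound for large $\varepsilon$. The only real divergence is how that log-convexity is obtained. The paper does not re-derive it; it quotes the two-endpoint inequality displayed as \eqref{eqkconvex} (from the author's earlier work [P90]): for any $W$ and $0\leq\alpha\leq 1$ with $P_0=P$ and $P_1=P+W$ both nonnegative, $k_{P_\alpha}^M\leq[k_{P_0}^M]^{1-\alpha}[k_{P_1}^M]^\alpha$. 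One then picks a small $\tilde c\in(0,c)$, takes $P_0=P-\tilde c V$, $P_1=P+\varepsilon V$, and $\alpha=\tilde c/(\tilde c+\varepsilon)$ so that $P_\alpha=P$; the upper bound $k_{P-\tilde c V}^M\leq A\,k_P^M$ from Theorem~\ref{thmbounded} then yields $k_{P+\varepsilon V}^M\geq A^{-\varepsilon/\tilde c}\,k_P^M$, the same exponential-in-$\varepsilon$ constant you obtain.

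Two observations. First, the Cauchy--Schwarz computation you flag as the ``hard part'' is precisely the content of \eqref{eqkconvex}; you should simply cite it rather than re-derive it. That derivation is genuinely nontrivial for nonsymmetric $P$ with a drift term and a sign-indefinite zeroth-order coefficient (there is no immediate Feynman--Kac representation $\int e^{-\varepsilon\Phi}\,\mathrm d\mu$ against a positive path measure, and the bare time-slicing identity you write down does not by itself produce the needed positive quadratic form), which is why the paper defers to [P90]. Second, your tangent-line argument $\log k_\varepsilon\geq\log k_0+\varepsilon\,\partial_\varepsilon\log k_\varepsilon|_{\varepsilon=0}\geq\log k_0-C\varepsilon$ is a clean reformulation, and the derivative bound at $\varepsilon=0$ does follow from the $3$-$k$ inequality \eqref{eq3kineq} (note $|V|=V$). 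But it implicitly uses existence of the right derivative of $\varepsilon\mapsto\log k_\varepsilon$ at $0$; the two-endpoint form avoids any differentiability discussion and only uses the pointwise comparison $k_{P-\tilde c V}^M\leq A\,k_P^M$, which Theorem~\ref{thmbounded} already hands you. Both routes end in the same place; the paper's is shorter because the required inequality is already in the literature and is reproduced in the proof itself.
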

\begin{proof}
By Theorem~\ref{thmbounded} the heat kernels
$k_{P+\varepsilon V}^M(x,y,t)$ and $k_{P}^M(x,y,t)$
are equivalent (respectively, semiequivalent) for any
$|\varepsilon|<c$.

Recall that by the generalized maximum principle,
\begin{equation}\label{eq_gmp}
k_{P+\varepsilon V}^M(x,y,t)\leq k_{P}^M(x,y,t)\qquad \forall \varepsilon>0.
\end{equation}
On the other hand, given a potential $W$ (not necessarily of definite sign), and $0\leq \ga\leq 1$, denote $P_\ga:= P+ \ga W$, and assume that $P_j\geq 0$ in $M$ for $j=0,1$. Then  for $0\leq \ga\leq 1$ we have $P_\ga\geq 0$ in $M$, and the following inequality holds \cite{P90}
\begin{equation}\label{eqkconvex}
k_{P_\ga}^M(x,y,t)\leq [k_{P_0}^M(x,y,t)]^{1-\ga}[k_{P_1}^M(x,y,t)]^\ga \qquad \forall\, x,y\in M,\; t>0.
\end{equation}
Using \eqref{eq_gmp} and \eqref{eqkconvex} we obtain the desired equivalence $k_{P+\varepsilon V}^M \asymp k_{P}^M$ also for $\varepsilon\geq c$.
\end{proof}
Finally we have:
\begin{theorem}[\cite{fkp}]\label{thmcond1}
Let $P_0$ be a critical operator in $M$. Assume that $V=V_+ - V_-$ is a potential such that $V_\pm \geq 0$ and  $P_+:=P_0+V$ is subcritical in $M$.

Assume further that $V_-$ is $k$-semibounded perturbation with respect to the heat kernel $k_{P_+}^M(x,y_1,t)$. Then condition \eqref{Ass1m} is satisfied uniformly in $x$. That is, there exist positive constants $C$ and $T$ such that
\begin{equation}\label{Ass1n}
    k_{P_+}^M(x,y_1,t)\leq C k_{P_0}^M(x,y_1,t)\qquad \forall x\in M, t>T.
\end{equation}
  \end{theorem}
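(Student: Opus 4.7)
The plan is to reduce the inequality \eqref{Ass1n} to a direct comparison between three heat kernels: $k_{P_+}^M$, $k_{P_++V_-}^M$, and $k_{P_0}^M$. The key algebraic identity to exploit is
\[
P_+ + V_- \;=\; P_0 + V\,+\,V_- \;=\; P_0 + V_+,
\]
so the operator $P_+ + V_-$ can be viewed either as a nonnegative perturbation of $P_+$ (by $V_-\ge 0$), or as a nonnegative perturbation of $P_0$ (by $V_+\ge 0$). Each of these two viewpoints yields one of the two inequalities we need.

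First, using that $V_+\ge 0$, the generalized maximum principle (the monotonicity in the potential stated in part~4 of Lemma~\ref{lem_hkp}) gives
\[
k_{P_++V_-}^M(x,y_1,t)\;=\;k_{P_0+V_+}^M(x,y_1,t)\;\le\; k_{P_0}^M(x,y_1,t)
\qquad\forall (x,t)\in M\times(0,\infty).
\]
This provides the upper bound in terms of $k_{P_0}^M$.

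Second, the semiboundedness hypothesis enters through Corollary~\ref{cor7} applied with $P:=P_+$ (which is subcritical with $\lambda_0=0$ by assumption) and the nonnegative potential $V_-$, which is assumed to be a $k$-semibounded perturbation with respect to $k_{P_+}^M(\cdot,y_1,\cdot)$. The corollary gives a constant $c>0$ such that for every $\varepsilon>-c$ the heat kernels $k_{P_++\varepsilon V_-}^M(\cdot,y_1,\cdot)$ and $k_{P_+}^M(\cdot,y_1,\cdot)$ are semiequivalent. Taking $\varepsilon=1>-c$ yields a constant $C_1>0$ with
\[
C_1^{-1} k_{P_+}^M(x,y_1,t)\;\le\; k_{P_++V_-}^M(x,y_1,t)\qquad\forall (x,t)\in M\times(0,\infty).
\]

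Chaining the two displayed inequalities gives $k_{P_+}^M(x,y_1,t)\le C_1\, k_{P_0}^M(x,y_1,t)$ for all $x\in M$ and all $t>0$, which is \eqref{Ass1n} with $C:=C_1$ and any $T\ge 0$ (in particular uniformly in $x$). I do not foresee a real obstacle: the only nontrivial input is Corollary~\ref{cor7}, which is already available. The one point worth noting in the write-up is simply the verification that $\varepsilon=1$ lies in the admissible range for Corollary~\ref{cor7}; since that range is $(-c,\infty)$ with $c>0$, this holds automatically.
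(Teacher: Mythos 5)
Your proof is correct and follows essentially the same route as the paper: both rewrite $P_+ + V_- = P_0 + V_+$, use the maximum principle to dominate $k_{P_0+V_+}^M$ by $k_{P_0}^M$, and invoke Corollary~\ref{cor7} with $\varepsilon=1$ to get the semiequivalence $k_{P_+}^M \asymp k_{P_+ + V_-}^M$. The only (harmless) difference is that you spell out the choice $\varepsilon=1$ and the check that it lies in $(-c,\infty)$, which the paper leaves implicit.
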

\begin{proof}
 It follows from  Corollary~\ref{cor7}, that the heat kernels $k_{P_+}^M(x,y_1,t)$ and $k_{P_+ + V_-}^M(x,y_1,t)$
 are semi\-equivalent. Note that  $P_+ + V_-=P_0 + V_+$, Therefore we have
\begin{multline*}\label{eq27}
C^{-1} k_{P_+}^M(x,y_1,t)\leq k_{P_+ + V_-}^M(x,y_1,t)\\ =k_{P_0+V_+}^M(x,y_1,t)\leq k_{P_0}^M(x,y_1,t) \qquad \forall x\in M, t>0.
\end{multline*}
\end{proof}





\begin{center}
{\bf Acknowledgments}
\end{center}
The author acknowledges the support of the Israel Science
Foundation (grant 963/11) founded by the Israeli Academy of Sciences and Humanities.

%
\end{document}